\DeclareSymbolFont{cyrletters}{OT2}{wncyr}{m}{n}
\newskip\procskipamount
\newskip\interskipamount
\newskip\refskipamount
\newcommand{\procskip}{\vskip\procskipamount}
\newcommand{\interskip}{\vskip\interskipamount}
\newcommand{\procbreak}{\par
   \ifdim\lastskip<\procskipamount\removelastskip
   \penalty-100
   \procskip\fi
   \noindent\ignorespaces}
\newcommand{\titlebreak}{\par%
\ifdim\lastskip<\interskipamount\removelastskip%
\penalty10000%
\interskip\fi%
\noindent}%
\newcommand{\interbreak}{\par%
\ifdim\lastskip<\interskipamount\removelastskip%
\penalty-100%
\interskip\fi%
\noindent\ignorespaces}%
\numberwithin{equation}{section}
\theoremstyle{plain}
\newtheorem{theorem}{Theorem}[section]
\newtheorem{lemma}[theorem]{Lemma}
\newtheorem{corollary}[theorem]{Corollary}
\newtheorem{proposition}[theorem]{Proposition}
\newtheorem{intro-theorem}{Theorem}
\newtheorem{intro-corollary}[intro-theorem]{Corollary}
\newtheorem{intro-proposition}[intro-theorem]{Proposition}
\theoremstyle{definition}
\newtheorem{definition}[theorem]{Definition}
\newtheorem{construction}[theorem]{Construction}
\newtheorem{remark}[theorem]{Remark}
\newtheorem{example}[theorem]{Example}
\newcommand{\marginrule}{\marginpar{\rule[-10.5mm]{1mm}{10mm}}}
\newcounter{listcounter}
\newcounter{deflistcounter}
\newcounter{equivcounter}
\newskip{\itemsepamount}
\newskip{\topsepamount}
\newenvironment{assertionlist}{%
  \begin{list}
    {\upshape (\arabic{listcounter})}
    {\setlength{\leftmargin}{18pt}
     \setlength{\rightmargin}{0pt}
     \setlength{\itemindent}{0pt}
     \setlength{\labelsep}{5pt}
     \setlength{\labelwidth}{13pt}
     \setlength{\listparindent}{\parindent}
     \setlength{\parsep}{0pt}
     \setlength{\itemsep}{\itemsepamount}
     \setlength{\topsep}{\topsepamount}
     \usecounter{listcounter}}}
  {\end{list}}
\newenvironment{definitionlist}{%
  \begin{list}
    {\upshape (\alph{deflistcounter})}
    {\setlength{\leftmargin}{18pt}
     \setlength{\rightmargin}{0pt}
     \setlength{\itemindent}{0pt}
     \setlength{\labelsep}{5pt}
     \setlength{\labelwidth}{13pt}
     \setlength{\listparindent}{\parindent}
     \setlength{\parsep}{0pt}
     \setlength{\itemsep}{\itemsepamount}
     \setlength{\topsep}{\topsepamount}
     \usecounter{deflistcounter}}}
  {\end{list}}
\newenvironment{equivlist}{%
  \begin{list}
    {\upshape (\roman{equivcounter})}
    {\setlength{\leftmargin}{18pt}
     \setlength{\rightmargin}{0pt}
     \setlength{\itemindent}{0pt}
     \setlength{\labelsep}{5pt}
     \setlength{\labelwidth}{13pt}
     \setlength{\listparindent}{\parindent}
     \setlength{\parsep}{0pt}
     \setlength{\itemsep}{\itemsepamount}
     \setlength{\topsep}{\topsepamount}
     \usecounter{equivcounter}}}
  {\end{list}}
\newenvironment{bulletlist}{%
  \begin{list}
    {\upshape \textbullet}
    {\setlength{\leftmargin}{18pt}
     \setlength{\rightmargin}{0pt}
     \setlength{\itemindent}{0pt}
     \setlength{\labelsep}{6pt}
     \setlength{\labelwidth}{12pt}
     \setlength{\listparindent}{\parindent}
     \setlength{\parsep}{0pt}
     \setlength{\itemsep}{\itemsepamount}
     \setlength{\topsep}{\topsepamount}}}
  {\end{list}}
\newenvironment{simplelist}{%
  \begin{list}{}
     {\setlength{\leftmargin}{10pt}
      \setlength{\rightmargin}{0pt}
      \setlength{\itemindent}{0pt}
      \setlength{\labelsep}{5pt}
      \setlength{\listparindent}{\parindent}
      \setlength{\parsep}{0pt}
      \setlength{\itemsep}{0pt}
      \setlength{\topsep}{0pt}}}
  {\end{list}}
\newenvironment{spacedlist}{%
  \begin{list}{}
     {\setlength{\leftmargin}{25pt}
      \setlength{\rightmargin}{0pt}
      \setlength{\itemindent}{0pt}
      \setlength{\labelsep}{5pt}
      \setlength{\listparindent}{\parindent}
      \setlength{\parsep}{0pt}
      \setlength{\itemsep}{6pt}
      \setlength{\topsep}{6pt}}}
  {\end{list}}
\renewcommand{\AA}{\mathbb{A}}
\newcommand{\CC}{\mathbb{C}}
\newcommand{\FF}{\mathbb{F}}
\newcommand{\GG}{\mathbb{G}}
\newcommand{\HH}{\mathbb{H}}
\newcommand{\II}{\mathbb{I}}
\newcommand{\OO}{\mathbb{O}}
\newcommand{\QQ}{\mathbb{Q}}
\newcommand{\RR}{\mathbb{R}}
\newcommand{\VV}{\mathbb{V}}
\newcommand{\WW}{\mathbb{W}}
\newcommand{\ZZ}{\mathbb{Z}}
\renewcommand{\hbar}{\bar{h}}
\newcommand{\kbar}{\bar{k}}
\newcommand{\sbar}{\bar{s}}
\newcommand{\Gbar}{\bar{G}}
\newcommand{\kgbar}{\bar{\kappa}}
\newcommand{\mgbar}{\bar{\mu}}
\newcommand{\QQbar}{\overline{\QQ}}
\newcommand{\Gbf}{{\bf G}}
\newcommand{\Ical}{{\mathcal I}}
\newcommand{\Ncal}{{\mathcal N}}
\newcommand{\Ocal}{{\mathcal O}}
\newcommand{\Pcal}{{\mathcal P}}
\newcommand{\Qcal}{{\mathcal Q}}
\newcommand{\Scal}{{\mathcal S}}
\newcommand{\Tcal}{{\mathcal T}}
\newcommand{\Vcal}{{\mathcal V}}
\newcommand{\gfr}{{\mathfrak g}}
\newcommand{\mfr}{{\mathfrak m}}
\newcommand{\Mline}{\underline{M}}
\newcommand{\Ascr}{{\mathscr A}}
\newcommand{\Bscr}{{\mathscr B}}
\newcommand{\Dscr}{{\mathscr D}}
\newcommand{\Gscr}{{\mathscr G}}
\newcommand{\Hscr}{{\mathscr H}}
\newcommand{\Lscr}{{\mathscr L}}
\newcommand{\Mscr}{{\mathscr M}}
\newcommand{\Oscr}{{\mathscr O}}
\newcommand{\Pscr}{{\mathscr P}}
\newcommand{\Tscr}{{\mathscr T}}
\newcommand{\Zscr}{{\mathscr Z}}
\newcommand{\Ascrtilde}{\tilde\Ascr}
\newcommand{\gtilde}{\tilde{g}}
\newcommand{\Ltilde}{\tilde{L}}
\newcommand{\Mtilde}{\tilde{M}}
\newcommand{\Ptilde}{\tilde{P}}
\newcommand{\Rtilde}{\tilde{R}}
\newcommand{\Wtilde}{\tilde{W}}
\newcommand{\Xtilde}{\tilde{X}}
\newcommand{\Ytilde}{\tilde{Y}}
\newcommand{\agtilde}{\tilde\alpha}
\newcommand{\zgtilde}{\tilde\zeta}
\newcommand{\eps}{\varepsilon}
\DeclareMathOperator{\Aut}{Aut}
\DeclareMathOperator{\Bor}{Bor}
\DeclareMathOperator{\codim}{codim}
\DeclareMathOperator{\der}{der}
\DeclareMathOperator{\DR}{DR}
\DeclareMathOperator{\End}{End}
\DeclareMathOperator{\Frac}{Frac}
\DeclareMathOperator{\Frob}{Frob}
\DeclareMathOperator{\Gal}{Gal}
\DeclareMathOperator{\GL}{GL}
\DeclareMathOperator{\GSp}{GSp}
\DeclareMathOperator{\Hom}{Hom}
\DeclareMathOperator{\id}{id}
\renewcommand{\Im}{\mathop{\rm Im}}
\DeclareMathOperator{\Isom}{Isom}
\newcommand{\Isomline}{\underline{\Isom}}
\DeclareMathOperator{\Ker}{Ker}
\DeclareMathOperator{\Lie}{Lie}
\renewcommand{\mod}[1]{\allowbreak \mkern5mu {\rm mod}\,\,#1}
\DeclareMathOperator{\Norm}{Norm}
\DeclareMathOperator{\Par}{Par}
\DeclareMathOperator{\relpos}{relpos}
\DeclareMathOperator{\Res}{Res}
\DeclareMathOperator{\sgn}{sgn}
\DeclareMathSymbol{\Sha}{\mathalpha}{cyrletters}{"58}
\DeclareMathOperator{\Sp}{Sp}
\DeclareMathOperator{\Spec}{Spec}
\newcommand{\ad}{\textnormal{ad}}
\newcommand{\cross}{{}^{\times}}
\newcommand{\dlbrack}{\mathopen{[\![}}
\newcommand{\drbrack}{\mathclose{]\!]}}
\newcommand{\limproj}{\mathop{\lim\limits_{\longleftarrow}}}
\newcommand{\lrangle}{{\langle\ ,\ \rangle}}
\renewcommand{\mod}[1]{\allowbreak \mkern5mu {\rm mod}\,\,#1}
\renewcommand{\star}{{}^*}
\newcommand{\vdual}{{}^{\vee}}
\newcommand\addots{\mathinner{\mkern1mu\raise0pt\vbox{\kern7pt\hbox{.}}\mkern2mu\raise3pt\hbox{.}\mkern2mu\raise6pt\hbox{.}\mkern1mu}}
\newcommand{\restricted}[1]{{}_{\vert}{}_{#1}}
\newcommand{\set}[2]{\{\,#1\ ;\  #2\,\}}
\newcommand\lto{\longrightarrow}
\newcommand\ltoover[1]{\mathrel{\smash{\overset{#1}{\lto}}}}
\newcommand\varto[1]{\mathrel{\hbox to #1pt{\rightarrowfill}}}
\newcommand\vartoover[2]{\mathrel{\smash{\overset{#2}{\varto{#1}}}}}
\newcommand{\bijective}{\leftrightarrow}
\newcommand{\epi}{\twoheadrightarrow}
\newcommand{\mono}{\hookrightarrow}
\newcommand{\sends}{\mapsto}
\newcommand{\iso}{\overset{\sim}{\to}}
\newcommand{\liso}{\overset{\sim}{\lto}}
\renewcommand{\implies}{\Rightarrow}
\renewcommand{\iff}{\Leftrightarrow}
\newcommand{\DZip}{\Dscr{-}\Zscr ip}
\newcommand{\APlus}{(A,\iota,\lambda,\eta)}
\newcommand{\APlusOne}{(A_1,\iota_1,\lambda_1,\eta_1)}
\newcommand{\APlusTwo}{(A_2,\iota_2,\lambda_2,\eta_2)}
\newcommand{\leftexp}[2]{{\vphantom{#2}}^{#1}{#2}}
\newcommand{\doubleexp}[3]{\leftexp{#1}{#2}^{#3}}
\newcommand{\dbrack}[1]{\dlbrack #1 \drbrack}
\begin{document}

\title{Ekedahl-Oort and Newton strata\\ for Shimura varieties of PEL type}

\author{Eva Viehmann \& Torsten Wedhorn}

\maketitle

\begin{abstract}
We study the Ekedahl-Oort stratification for good reductions of Shimura varieties of PEL type. These generalize the Ekedahl-Oort strata defined and studied by Oort for the moduli space of principally polarized abelian varieties (the ``Siegel case''). They are parameterized by certain elements $w$ in the Weyl group of the reductive group of the Shimura datum. We show that for every such $w$ the corresponding Ekedahl-Oort stratum is smooth, quasi-affine, and of dimension $\ell(w)$ (and in particular non-empty). Some of these results have previously been obtained by Moonen, Vasiu, and the second author using different methods. We determine the closure relations of the strata.

We give a group-theoretical definition of minimal Ekedahl-Oort strata generalizing Oort's definition in the Siegel case and study the question whether each Newton stratum contains a minimal Ekedahl-Oort stratum. As an interesting application we determine which Newton strata are non-empty. This criterion proves conjectures by Fargues and by Rapoport generalizing a conjecture by Manin for the Siegel case. We give a necessary criterion when a given Ekedahl-Oort stratum and a given Newton stratum meet. 
\end{abstract}


\section*{Introduction}

\subsection*{Starting point: The Siegel case}
Fix a prime $p > 0$. For $g \geq 1$ let $\Ascr_g$ be the moduli space of principally polarized abelian varieties of dimension $g$ in characteristic $p$. We consider two natural and well studied stratifications on $\Ascr_g$, the Newton and the Ekedahl-Oort stratification. The Newton stratification is the stratification corresponding to the isogeny class of the underlying $p$-divisible groups (with quasi-polarization) $(A,\lambda)[p^{\infty}]$ of the points $(A,\lambda)$ of $\Ascr_g$. By Dieudonn\'e theory the set of isogeny classes is parameterized by the attached symmetric (concave) Newton polygons. The Newton stratum in $\Ascr_g$ attached to a symmetric Newton polygon $\nu$ is denoted by $\Ncal_\nu$. The Ekedahl-Oort stratification is defined by the isomorphism class of the $p$-torsion $(A,\lambda)[p]$ or, equivalently, the isomorphism class of the first de Rham cohomology endowed with its structure of an $F$-zip (\cite{MW}). The set of strata is either parameterized by so-called elementary sequences (by Oort~\cite{Oo1}) or by certain elements in the Weyl group of the group $GSp_{2g}$ of symplectic similitudes (by Moonen~\cite{Mo1}, see also~\cite{MW}). By the pioneering work of Oort a lot is known about these two stratifications (see~\cite{Oo1} -- \cite{Oort_Simple}), for instance:
\begin{assertionlist}
\item
The Ekedahl-Oort strata are smooth, quasi-affine, and equi-dimensional and their dimension can be easily calculated in terms of the corresponding elementary sequence. The closure of an Ekedahl-Oort stratum is a union of Ekedahl-Oort strata -- although the question which Ekedahl-Oort strata appear in the closure of a given one remained open.
\item
Oort defines the notion of a minimal $p$-divisible group and shows that it is already determined by its $p$-torsion (up to isomorphism). The corresponding Ekedahl-Oort strata are called minimal as well. Due to his definition of minimality it is clear that every Newton stratum contains a unique minimal Ekedahl-Oort stratum.
\item
For every symmetric Newton polygon $\nu$ the corresponding Newton stratum $\Ncal_\nu$ is non-empty (``Manin conjecture''). One even has that every principally polarized $p$-divisible group is the $p$-divisible group of a principally polarized abelian variety.
\item
The closure of a Newton stratum $\Ncal_{\nu}$ is the union of those Newton strata $\Ncal_{\nu'}$ whose Newton polygon $\nu'$ lies below $\nu$, where we normalize Newton polygons such that they are concave (``Grothendieck conjecture'').
\end{assertionlist}
Note that this is not a complete list. For some of these results generalizations are known for arbitrary PEL-data using different methods. Others have been generalized to other special cases of PEL-data, see below for details.

\subsection*{Main results}
The goal of this article is to generalize and study the above notions for good reductions of general Shimura varieties of PEL type. 

Let $\Dscr$ denote a Shimura PEL-datum unramified at a prime $p > 0$, let $G$ be the attached reductive group scheme over $\ZZ_p$, and let $\mu$ be the conjugacy class of one-parameter subgroups defined by $\Dscr$ (see Section~\ref{sec2.1}). We assume that the fibers of $G$ are connected, i.e., we exclude the case~(D) in Kottwitz' terminology (recalled in Remark~\ref{CasesB}). Let $\Ascr_\Dscr$ be the corresponding moduli space of PEL type defined by Kottwitz. We denote its reduction modulo $p$ by $\Ascr_0$. It is defined over a finite field $\kappa$ determined by $\Dscr$ and it classifies tuples $\APlus$, where $A$ is an abelian variety, where $\iota$ is an action on $A$ of an order in a semi-simple $\QQ$-algebra $B$ given by $\Dscr$, where $\lambda$ is a similitude class of prime-to-$p$ polarizations on $A$, and where $\eta$ is a prime-to-$p$-level structure (see Section~\ref{ModSpaceAbVar} for details). For $B = \QQ$ one obtains the Siegel case, i.e., the moduli space of principally polarized abelian varieties.

The Ekedahl-Oort stratification is defined by attaching to every $\APlus$ as above the first de Rham homology $H_1^{\DR}(A)$ (which is by definition the dual of the first hypercohomology of the de Rham complex) endowed with its $F$-zip structure and the additional structure induced by $\iota$ and $\lambda$. For the sake of brevity we call such an object a $\Dscr$-zip (see Section~\ref{DZipToAV}). Over a perfect field, covariant Dieudonn\'e theory and a result of Oda show that it is equivalent to give the datum of the $\Dscr$-zip of $\APlus$ or the $p$-torsion of $A$ together with the structure induced by $\iota$ and $\lambda$ (Example~\ref{DieudonneZip}). The moduli space of all $\Dscr$-zips is a smooth Artin stack of finite type over the finite field $\kappa$, which we denote by $\DZip$. The above construction defines a morphism
\[
\zeta\colon \Ascr_0 \to \DZip.
\]
Let $\kgbar$ be an algebraic closure of $\kappa$. We consider the isomorphism class $w$ of a $\kgbar$-valued point of $\DZip$ (or, equivalently, the isomorphism class of a $\Dscr$-zip over $\kgbar$) as a point of the underlying topological space of $\DZip \otimes \kgbar$. The corresponding Ekedahl-Oort stratum $\Ascr_0^w$ is then defined as $\zeta^{-1}(w)$.

Isomorphism classes of $\Dscr$-zips can be described as follows. Let $(W,I)$ be the Weyl group of the reductive group $G$ together with its set of simple reflections. To the one-parameter subgroup $\mu$ we attach a subset $J \subseteq I$ of simple reflections (see last subsection of Section~\ref{sec2.1}). By the main result of \cite{MW}, the underlying topological space of $\DZip \otimes \kgbar$ is in bijection to $\leftexp{J}{W}$, where
\[
\leftexp{J}{W} := \set{w \in W}{\text{$\ell(sw) > \ell(w)$ for all $s \in J$}}
\]
is the set of representatives of minimal length of $W_J \backslash W$ (Appendix~\ref{RecallCoxeter} and~\ref{BruhatOrder}). Similar classification results have been obtained by B.~Moonen (\cite{Mo1}) and, more generally, by A.~Vasiu (\cite{Vasiu:ModPClassification}). 

By transport of structure we obtain a topology on $\leftexp{J}{W}$ which can be described combinatorially by results of Pink, Ziegler and the second author (\cite{PWZ}) via a partial order $\preceq$ on $\leftexp{J}{W}$ which was introduced by He~\cite{He_GStablePieces} in his study of the spherical completions of reductive groups (see Definition~\ref{SpecializationOrder}). 
This description can also be derived from results of the first author (\cite{trunc1}). In particular the closures and the codimension of points are known. These results are recalled in Section~\ref{GroupDZip}.

\bigskip

The first goal of this paper is the study of the Ekedahl-Oort stratification and the morphism $\zeta$. We show (Theorem~\ref{zetaflat}, Theorem~\ref{thmEOnonempty}):

\begin{intro-theorem}\label{IntroZetaFlat}
The morphism $\zeta$ is faithfully flat.
\end{intro-theorem}

For the proof of the flatness we use a valuative criterion for universal openness (Proposition~\ref{opencrit}). Our proof that $\zeta$ satisfies this valuative criterion relies on the theory of Dieudonn\'e displays developed by Zink (\cite{Zink_Dieudonne}) and Lau (\cite{Lau_Duality}, \cite{Lau_DieudonneDisplays}), recalled in Section~3.1.

The theorem shows in particular that $\zeta$ is open and surjective and we deduce from the known facts on the topology of $\leftexp{J}{W}$ the following result (Theorem~\ref{thmEOnonempty}, Corollary~\ref{DimOfEO}, Theorem~\ref{corclosure}):

\begin{intro-theorem}\label{IntroEO1}
\begin{assertionlist}
\item
The Ekedahl-Oort stratum $\Ascr^w_0$ is non-empty for all $w \in \leftexp{J}{W}$.
\item
$\Ascr^w_0$ is equi-dimensional of dimension $\ell(w)$.
\item
The closure of $\Ascr^w_0$ is the union of the Ekedahl-Oort strata $\Ascr^{w'}_0$ for those $w' \in \leftexp{J}{W}$ such that $w' \preceq w$.
\end{assertionlist}
\end{intro-theorem}
In particular there exists a unique Ekedahl-Oort stratum of dimension $0$ (corresponding to $w = 1$), which we call the superspecial Ekedahl-Oort stratum.

Assertion~(2) had already been proved by Moonen~\cite{Mo2} for $p > 2$ and under the assumption of Assertion~(1). Assertion~(3) is new even in the Siegel case. In fact for the Siegel case it was also claimed in the unpublished preprint \cite{Wd_SpecFZip} which however relied on the unpublished note \cite{Wd_FlatSiegel}, and this note contained a gap because it used some results of~\cite{Lau_Duality} which were published only four years later. We complete our picture on general Ekedahl-Oort strata by the following result (Proposition~\ref{EOSmooth}, Theorem~\ref{EOQuasiAffine}).

\begin{intro-theorem}\label{IntroEO2}
For all $w \in \leftexp{J}{W}$ the Ekedahl-Oort stratum $\Ascr^w_0$ is smooth and quasi-affine.
\end{intro-theorem}

The smoothness was already shown by Vasiu~\cite{Vasiu_Levelm} and we include only a quick alternative proof for $p > 2$.

\bigskip

Our second goal is the generalization and group-theoretic reformulation of Oort's notion of a minimal $p$-divisible group or a minimal Ekedahl-Oort stratum. For an abelian variety with  endomorphism, polarization and level structure $\APlus$ as above, $\iota$ and $\lambda$ induce additional structures on the $p$-divisible group $A[p^{\infty}]$ of $A$. We obtain the notion of a $p$-divisible group with $\Dscr$-structure (see Definition~\ref{DefDPdiv}). Let $k$ be an algebraically closed extension of the field of definition $\kappa$ of $\Ascr_0$. Let $L := \Frac W(k)$ be the field of fractions of the ring of Witt vectors endowed with the Frobenius, denoted by $\sigma$. Let $K=G(W(k))$. Covariant Dieudonn\'e theory yields a bijection
\begin{equation*}\label{IntroPDiv}
\left\{\begin{matrix}\text{isomorphism classes of $p$-divisible}\\
\text{groups with $\Dscr$-structure over $k$}\end{matrix}\right\}
\bijective C_k(G,\mu) :=
\left\{\begin{matrix}
\text{$K$-$\sigma$-conjugacy classes}\\
\text{in $K\mu(p)K$}
\end{matrix}\right\}
\end{equation*}
(see Section~\ref{CGBG} for details). We define minimality of a $p$-divisible group with $\Dscr$-structure $X$ by a group-theoretic criterion for the corresponding element in $C_k(G,\mu)$ (Definition~\ref{defminimal}) which is equivalent to the condition that $X$ is up to isomorphism already determined by its $p$-torsion (see Remark~\ref{ExplainMinimal}). To study the notion of minimality we also define the technical condition for $X$ to be fundamental (Definition~\ref{defpfund}) which is related to the definition of fundamental elements in the affine Weyl group $\Wtilde$ of $G$ given by Grtz, Haines, Kottwitz, and Reuman in~\cite{GHKR2} (although we slightly deviate from their definition). If $G$ is split (i.e., a product of groups of the form $GL_n$ and $GSp_{2g}$), we show the following result (Section~\ref{SecMinFund}).

\begin{intro-proposition}\label{IntroMinimal}
Assume that $G$ is split. Let $X$ be a $p$-divisible group with $\Dscr$-structure over an algebraically closed field.
\begin{assertionlist}
\item
If $X$ is minimal in the sense of Oort, then it is fundamental.
\item
If $X$ is fundamental, then it is minimal (in our sense).
\end{assertionlist}
\end{intro-proposition}

The proof of the second assertion relies essentially on a result of~\cite{GHKR2}. Both assertions together give in particular a new group-theoretic proof of the main result of~\cite{Oort2}. In fact, Theorem~0.2 of~\cite{Oort_Simple} can then be used to show that all three conditions are equivalent if $G = \GL_n$. For $G = \GSp_{2g}$ the same assertion follows by using uniqueness of polarizations (\cite{Oort1}~Corollary~(3.8)). Thus all three conditions are equivalent if $G$ is split.

\bigskip

The third goal of this paper is to relate Ekedahl-Oort strata and Newton strata. The group-theoretic definition of Newton strata is due to Rapoport and Richartz (\cite{RapoportRichartz}). Similarly as above it is well-known that there is an injection from the set of isogeny classes of $p$-divisible groups with $\Dscr$-structure over an algebraically closed field $k$ into the set $B(G)$ of $G(L)$-$\sigma$-conjugacy classes in $G(L)$. By Kottwitz' description of $B(G)$ this set is independent of $k$ and by attaching to $\APlus$ the isogeny class of the associated $p$-divisible group with $\Dscr$-structure we obtain a map ${\rm Nt}\colon \Ascr_0 \to B(G)$ whose image is known to be contained in a certain finite subset $B(G,\mu)$. The set $B(G)$ (and hence its subset $B(G,\mu)$) is endowed with a partial order which is the group-theoretic reformulation of the partial order on the set of (concave) Newton polygons of ``one lying above the other'' (see Section~\ref{CGBG} for details). Then the sets $\Ncal_b := {\rm Nt}^{-1}(b)$ for $b \in B(G,\mu)$ are called the Newton strata of $\Ascr_0$. They are locally closed in $\Ascr_0$. The stratum corresponding to the minimal element of $B(G,\mu)$ is called the \emph{basic stratum}. We show (Theorem~\ref{MinimalNewtonSplit}, Proposition~\ref{remminbasic}, Theorem \ref{thmexfundalc}):

\begin{intro-theorem}\label{IntroEONewton1}
\begin{assertionlist}
\item\label{1}
If $G$ is split, then every Newton stratum $\Ncal_b$ contains a unique fundamental Ekedahl-Oort stratum $\Ascr^{w(b)}_0$ (where an Ekedahl-Oort stratum $\Ascr_0^w$ is called fundamental if the $p$-divisible group with $\Dscr$-structure attached to one (or, equivalently, all) points $\APlus$ of $\Ascr_0^w$ is fundamental).
\item
In general, the basic Newton stratum contains the superspecial Ekedahl-Oort stratum, and this stratum is minimal.
Furthermore, for each Newton stratum $\Ncal_b$ there is a Newton stratum $\Ncal'_b$ associated with the same $b\in B(G)$ but possibly in a moduli space associated with a different $\mu$ which contains a minimal Ekedahl-Oort stratum.
\end{assertionlist}
\end{intro-theorem}

The uniqueness assertion made for split groups does not hold for general $G$. An example for Hilbert-Blumenthal varieties is given in \ref{exhb}. We do not know whether the existence statement of (\ref{1}) holds in general.

We use this theorem to prove ``Manin's conjecture'' (i.e., the non-emptiness of all Newton strata) for the PEL-case and also an integral version of it:

\begin{intro-theorem}\label{IntroManin}
\begin{assertionlist}
\item
For every $b \in B(G,\mu)$ the Newton stratum $\Ncal_b$ is non-empty.
\item
For every algebraically closed field extension $k$ of $\kappa$ and for every $p$-divisible group $X$ with $\Dscr$-structure over $k$ there exists a $k$-valued point of $\Ascr_0$ whose attached $p$-divisible group with $\Dscr$-structure is isomorphic to $X$.
\end{assertionlist}
\end{intro-theorem}

This was conjectured by Fargues (\cite{Fargues}~Conjecture~3.1.1), and by Rapoport (\cite{RapoportGuide}~Conjecture~7.1). ``Manin problems'' have also been considered by Vasiu in~\cite{Vasiu_Manin} using a different language.

We use Theorem~\ref{IntroEONewton1} to show in the split case two results about the intersection of Newton and Ekedahl-Oort strata. In the Siegel case the first one (Theorem~\ref{corEONP}) had been conjectured by Oort (\cite{Oort1}) and proved by Harashita (\cite{H3}):

\begin{intro-theorem}\label{IntroEONewton3}
Assume that $G$ is split. Then for all $w \in \leftexp{J}{W}$ with $\Ascr^w_0 \cap \Ncal_b \ne \emptyset$ one has $w(b) \preceq w$ where $w(b)$ is defined as in Theorem \ref{IntroEONewton1}.
\end{intro-theorem}

The second result is a generalization of a theorem of Harashita (\cite{H3}) for the Siegel case which describes the Newton polygon of the generic point of some irreducible component of a given Ekedahl-Oort stratum (see Proposition~\ref{cor812} for a precise statement).

Finally we deduce from Theorem~\ref{IntroEONewton3} the following result (Corollary~\ref{corclosleaf1}).

\begin{intro-proposition}\label{IntroEONewton2}
Let $G$ be split. For $b,b' \in B(G,\mu)$ with $b' \leq b$ the minimal Ekedahl-Oort stratum $\Ascr^{w(b')}_0$ is contained in the closure of $\Ascr^{w(b)}_0$.
\end{intro-proposition}

In the Siegel case this has been shown by Harashita (\cite{H2}, Corollary 3.2) and is used as a step in his proof of Theorem \ref{IntroEONewton3}.
This gives in particular a new proof of the ``weak Grothendieck conjecture'' (\cite{RapoportGuide}~Conjecture~(7.3)(1)) in the split case:

\begin{intro-corollary}\label{IntroGrothendieck}
Let $G$ be split. For $b,b' \in B(G,\mu)$ with $b' \leq b$ the Newton stratum $\Ncal_{b'}$ meets the closure of $\Ncal_b$.
\end{intro-corollary}

\bigskip

Beyond the Siegel case some of the results above were known for special cases of Shimura varieties of PEL type. The Ekedahl-Oort stratification for Hilbert-Blumenthal varieties (i.e.~with the notation of Section~\ref{sec2.1}, $(B,\star,V) = (F,\id,F^2)$, where $F$ is a totally real field extension of $\QQ$ of degree $g$) has been studied by Goren and Oort (\cite{GO}). In this case $W = (S_2)^g$ (where $S_2 = \{\id,\tau\}$ is the symmetric group of two elements), $J = \emptyset$, the order $\preceq$ equals the Bruhat order which is the $g$-fold product of the order on $S_2$ with $\id < \tau$. They show Theorem~\ref{IntroEO1}, Theorem~\ref{IntroEO2} and the analogue of Proposition~\ref{cor812} below in this case.

The ``linear case of signature $(1,n-1)$'' (in the notation of Section~\ref{sec2.1}: the derived group of $\Gbf_{\RR}$ is isomorphic to a product of ${\rm SU}(1,n-1)$ and compact groups, and the prime $p$ is chosen such that only the case (AL) in the classification in Remark~\ref{CasesOB} occurs) has been studied by Harris and Taylor (\cite{HT}) for their proof of the local Langlands conjecture. In this case the Ekedahl-Oort and the Newton stratification coincide, and all Ekedahl-Oort strata are minimal.

The ``unitary case of signature $(1,n-1)$'' (in the notation of Section~\ref{sec2.1}: the derived group of $\Gbf_{\RR}$ is isomorphic to ${\rm SU}(1,n-1)$, and the prime $p$ is chosen such that only the case (AU) in the classification in Remark~\ref{CasesOB} occurs) has been studied by Bltel and the second author (\cite{BW_Unitary}, see also~\cite{VW}). In this case $W = S_n$, $J = \{\tau_2,\dots,\tau_{n-1}\}$ (where $\tau_i$ denotes the transposition of $i$ and $i+1$), the partial order $\preceq$ on $\leftexp{J}{W}$ coincides with the Bruhat order and it is a total order. Every non-basic Newton stratum is a minimal Ekedahl-Oort stratum, and the basic Newton stratum consists of $[(n-1)/2]$ Ekedahl-Oort strata.

\tableofcontents

\bigskip

\noindent{\itshape Notation}. Throughout the paper we use the following notation. Let $G$ be a group, $X \subset G$ a subset and $g \in G$. Then we set $\leftexp{g}{X} = gXg^{-1}$.

For a commutative ring $R$, an $R$-scheme $X$, and a commutative $R$-algebra $R'$ we denote by $X_{R'} := X \otimes_R R' := X \times_{\Spec R} \Spec R'$ the base change to $R'$.

\bigskip

\noindent{\it Acknowledgements.} We thank E. Lau for providing a proof of Lemma \ref{ZinkHensel} which is much simpler than our original one. Further we thank F.~Oort for helpful discussions and D.~Wortmann and the referee for helpful remarks. The first author was partially supported by the SFB/TR45 ``Periods, Moduli Spaces and Arithmetic of Algebraic Varieties'' of the DFG and by ERC Starting Grant 277889 ``Moduli spaces of local $G$-shtukas''. The second author was partially supported by the SPP1388 ``Representation Theory''.

\bigskip


\section{Good Reductions of Shimura varieties}

\subsection{Shimura data of PEL type}\label{sec2.1}
In this section we recall the notion of Shimura-PEL-data and their attached moduli spaces. Our main reference is Kottwitz~\cite{Ko_ShFin}. 

\subsubsection*{Shimura datum}
Let $\Dscr = \bigl(B,\star,V,\lrangle, O_B, \Lambda, h\bigr)$ denote a Shimura-PEL-datum, integral and unramified at a prime~$p > 0$. Let $\Gbf$ be the associated reductive group over~$\QQ$, and denote by $[\mu]$ the associated conjugacy class of cocharacters of~$\Gbf$. By this we mean the following data.
\begin{bulletlist}
\item
$B$ is a finite-dimensional semi-simple $\QQ$-algebra, such that $B_{\QQ_p}$ is isomorphic to a product of matrix algebras over unramified extensions of~$\QQ_p$.
\item
$\star$ is a $\QQ$-linear positive involution on~$B$.
\item
$V$ is a finitely generated faithful left $B$-module.
\item
$\lrangle\colon V \times V \to \QQ$ is a symplectic form on~$V$ such that $\langle bv,w \rangle = \langle v,b^*w \rangle$ for all $v,w \in V$ and $b \in B$.
\item
$O_B$ is a $\star$-invariant $\ZZ_{(p)}$-order of~$B$ such that $O_B \otimes \ZZ_p$ is a maximal $\ZZ_p$-order of $B \otimes \QQ_p$.
\item
$\Lambda$ is an $O_B$-invariant $\ZZ_p$-lattice in~$V_{\QQ_p}$, such that $\lrangle$ induces a perfect pairing $\Lambda \times \Lambda \to \ZZ_p$. 
\item
$\Gbf$ is the $\QQ$-group of $B$-linear symplectic similitudes of~$(V,\lrangle)$, i.e., for
any $\QQ$-algebra~$R$ we have
$$
\Gbf(R) = \bigl\{g \in \GL_B(V \otimes R) \bigm| \langle gv,gw 
\rangle = c(g) \cdot \langle
v,w \rangle\ \hbox{for some}\ c(g) \in R\cross \bigr\}\, ;
$$
\item
$h\colon \Res_{\CC/\RR}(\GG_{m,\CC}) \to \Gbf_{\RR}$ is a homomorphism that defines a Hodge structure of type $(-1,0) + (0,-1)$ on $V \otimes \RR$ such that there exists a square root $\sqrt{-1}$ of $-1$ such that $2\pi \sqrt{-1}\lrangle$ is a polarization form; let $\mu_h\colon \GG_{m,\CC} \to \Gbf_{\CC}$ be the cocharacter such that $\mu_h(z)$ acts on $V^{(-1,0)}$ (resp.~$V^{(0,-1)}$) via $z$ (resp.~via $1$).
\item
$[\mu]$ is the $G(\CC)$-conjugacy class of the cocharacter~$\mu_h$ associated with~$h$. Then $V_{\CC}$ has only weights 0 and~1 with respect to any $\mu \in [\mu]$.
\end{bulletlist}

\medskip

\noindent Let $B = \prod_i B_i$ be a decomposition of $B$ into simple $\QQ$-algebras. Because of the positivity of the involution, $\star$ induces an involution on each simple factor $B_i$.

\begin{remark}\label{CasesB}
Let $B$ be simple. Let $F$ be the center of $B$ and set $F_0 := \set{a \in F}{a\star = a}$. Then $F_0$ is a totally real field extension of $\QQ$. Let $C := \End_B(V)$ and let $n := [F : F_0]\dim_F(C)^{1/2}/2$ (which is an integer by the existence of $h$). Then $C$ is a central simple $F$-algebra and $C \otimes_{\QQ} \RR$ is isomorphic to one of the following algebras
\begin{simplelist}
\item[(A)]
the product of $[F_0 : \QQ]$ copies of $M_n(\CC)$.
\item[(C)]
the product of $[F_0 : \QQ]$ copies of $M_{2n}(\RR)$.
\item[(D)]
the product of $[F_0 : \QQ]$ copies of $M_n(\HH)$, where $\HH$ denotes the Hamilton quaternions.
\end{simplelist}
\end{remark}

In this article we make the assumption that $\Gbf$ is connected. This is equivalent to the assumption that there is no simple factor of $B$ such that we are in case (D).

\begin{remark}\label{StructureGbf}
We recall some facts about the derived group $\Gbf^{\der}$ and the quotient $D := \Gbf/\Gbf^{\rm der}$ from~\cite{Ko_ShFin}~\S7. Let $\Gbf'$ be the kernel of the multiplicator homomorphism $c\colon \Gbf \to \GG_{m,\QQ}$. Then $\Gbf' = \Res_{F_0/\QQ}(\Gbf_0)$, where $\Res_{F_0/\QQ}(\ )$ denotes restriction of scalars from $F_0$ to $\QQ$ and where $\Gbf_0$ is the $F_0$-group of $B$-linear symplectic automorphisms of $(V,\lrangle)$. Let us again assume that $B$ is simple.

In case~(C), $\Gbf_0$ is an (automatically inner) form of a split symplectic group over $F_0$. In particular $\Gbf_0$ is simply connected. Hence $\Gbf' = \Gbf^{\der}$ is simply connected and $D \cong \GG_{m,\QQ}$.

In case~(A), $\Gbf_0$ is an inner form of the quasi-split unitary group over $F_0$ attached to an $(F/F_0)$-hermitian space $(V_0,(\ ,\ )_0)$. Thus $\Gbf_0^{\der}$ is simply connected and hence $\Gbf^{\rm der}$ is simply connected. Moreover, as passing from $\Gbf$ to an inner form does not change $D$, we see that $D$ is the subtorus of $\GG_{m,\QQ} \times \Res_{F/\QQ}\GG_{m,F}$ of elements $(t,x)$ such that $N_{F/F_0}(x) = t^n$, where $n := \dim_F(V_0)$.

If $n$ is even, say $n = 2k$, then we have an isomorphism
\begin{equation}\label{DescribeDEven}
D \cong \GG_{m,\QQ} \times \Res_{F_0/\QQ}D_0, \qquad (t,x) \sends (t,xt^{-k}),
\end{equation}
where $D_0 := \Ker(N_{F/F_0}\colon \Res_{F/F_0}\GG_{m,F} \to \GG_{m,F_0})$. If $n$ is odd, say $n = 2k+1$, then $(t,x) \sends t^{-k}x$ yields an isomorphism of $D$ with the $\QQ$-torus of elements $y$ in $\Res_{F/\QQ}\GG_{m,F}$ such that $N_{F/F_0}(y)$ belongs to $\GG_{m,\QQ}$. In particular we have an exact sequence
\begin{equation}\label{DescribeDOdd}
1 \to \Res_{F_0/\QQ}D_0 \to D \to \GG_{m,\QQ} \to 1.
\end{equation}
\end{remark}

\begin{remark}\label{CasesOB}
Our assumption that we exclude case (D) implies that $(O_B,\star) \otimes_{\ZZ_{(p)}} \ZZ_p$ is a product of $\ZZ_p$-algebras with involution $(O_i,\star)$, where $O_i$ is a matrix algebra over one the following rings $R$ (endowed with the involution induced by $\star$).
\begin{simplelist}
\item[(AL)] $R = O_K \times O_K$ where $K$ is some finite unramified extension of $\QQ_p$ and $(x,y)\star = (y,x)$ for $(x,y) \in O_K \times O_K$.
\item[(AU)] $R = O_K$ where $K$ is some finite unramified extension of $\QQ_p$ and $\star$ induces a $\QQ_p$-automorphism of order $2$ on $K$.
\item[(C)] $R = O_K$ where $K$ is some finite unramified extension of $\QQ_p$ and $\star$ is the identity on $O_K$.
\end{simplelist}
Note that the classifications given here and given in Remark~\ref{CasesB} match. Indeed, if $B$ is simple (and under our assumption that case (D) is excluded) we are in case (C) (as in Remark~\ref{CasesB}) if and only if $\star$ is trivial on $F$ which is again equivalent to $(O_B \otimes \ZZ_p,\star)$ being isomorphic to a product of $\ZZ_p$-algebras of type (C) in this classification.
\end{remark}

%
%
\subsubsection*{Reflex field and the determinant condition}
Let $E$ be the reflex field associated with $\Dscr$, i.e.~the field of definition of~$[\mu]$. It is a finite extension of~$\QQ$. We set $O_{E,(p)} := O_E \otimes_{\ZZ} \ZZ_{(p)}$. If we choose $\mu \in [\mu]$ and denote by $V_{\CC} = V_0 \oplus V_1$ the weight decomposition corresponding to $\mu$, then $E$ is the field of definition of the isomorphism class of the complex representation of $B$ on $V_1$.

We will now formulate Kottwitz's determinant condition following Rapoport and Zink (\cite{RZ}~3.23(a)). Choose a model $W$ of (the isomorphism class of) the $B$-represen-tation $V_1$ over $E$ and let $\Gamma$ be an $O_B$-invariant $O_{E,(p)}$-lattice in $W$. Let
\[
\OO_B := \VV(O\vdual_B \otimes_{\ZZ_{(p)}} O_{E,(p)})
\]
be the geometric vector bundle over $O_{E,(p)}$ corresponding to the free $\ZZ_{(p)}$-module $O_B$ and let $\delta_{\Dscr}\colon \OO_B \to \AA^1_{O_{E,(p)}}$ be the morphism of $O_{E,(p)}$-schemes given on $R$-valued points ($R$ any $O_{E,(p)}$-algebra) by
\begin{equation}\label{DefineDet}
\delta_{\Dscr}(R) \colon \OO_B(R) = O_B \otimes_{\ZZ_{(p)}} R \to R, \quad b \sends \det(b\mid \Gamma \otimes_{O_{E,(p)}} R).
\end{equation}
As $\delta_{\Dscr}$ is determined by its restriction to the generic fiber, it is independent of the choice of $\Gamma$.

Let $T$ be an $O_{E,(p)}$-scheme and let $L$ be a finite locally free $\Oscr_T$-module endowed with an $O_B$-action $O_B \to \End_{\Oscr_T}(L)$. As above we obtain a morphism of $T$-schemes
\[
\delta_L\colon \OO_B \times_{O_{E,(p)}} T \to \AA^1_T.
\]

\begin{definition}\label{DefDetCond}
We say that $L$ satisfies the \emph{determinant condition (with respect to $\Dscr$)} if the morphisms $\delta_L$ and $\delta_{\Dscr} \otimes \id_T$ of $T$-schemes are equal.
\end{definition}

If $L$ satisfies the determinant condition, then its rank is equal to $\dim_{\CC}(V_1) = \dim_{\QQ}(V)/2$.

\begin{remark}\label{CheckDetCond}
As morphisms of schemes can be glued for the fpqc topology, it suffices to check the determinant condition locally for the fpqc topology.

Moreover, as $O_B$ is unramified over $\ZZ_p$, the determinant condition on $L$ can be translated (possibly after some faithfully flat quasi-compact base change) into a condition on the rank of certain direct summands (see~\cite{RZ}~3.23(b) for details). This shows that the determinant condition on $L$ is open and closed in the unramified case. More precisely, there exists an open and closed subscheme $T_0$ of $T$ such that a morphism $f\colon T' \to T$ factors through $T_0$ if and only if $f^*L$ satisfies the determinant condition.
\end{remark}

\subsubsection*{The reductive group scheme $G$ and the set of simple reflections $J$}
As before let $\Dscr=(B,\star,V,\lrangle,\Gbf, O_B,\Lambda,h,[\mu])$ denote a Shimura PEL-datum (see Section \ref{sec2.1}). Let $G$ be the $\ZZ_p$-group scheme of $O_B$-linear symplectic similitudes of $\Lambda$. This is a reductive group scheme over $\ZZ_p$ whose generic fiber is $\Gbf_{\QQ_p}$. It is quasi-split (Appendix~\ref{Parabolic}). We denote the special fiber of $G$ by $\Gbar$.

We denote by $(W,I)$ the Weyl group of $G$ (or of $\Gbar$) together with its set of simple reflections (Appendix~\ref{Weylgroups}). The Frobenius endomorphism of $\Gbar$ induces an automorphism $\bar\varphi$ of $(W,I)$. Let $J \subset I$ be the type of the conjugacy class of one-parameter subgroups $[\mu^{-1}]$ (Appendix~\ref{one-ps}). It is defined over $\kappa$ (i.e.~$\bar\varphi^n(J)=J$ where $n=[\kappa:\mathbb{F}_p]$).

\begin{remark}\label{StructureG}
The reductive group scheme $G$ sits in an exact sequence
\begin{equation}\label{RelateGGprime}
1 \to G' \lto G \ltoover{c} \GG_{m,\ZZ_p} \to 1,
\end{equation}
where $c$ is the multiplicator homomorphism and $G'$ is the reductive $\ZZ_p$-group scheme of $O_B$-linear symplectic automorphisms of $\Lambda$. Every decomposition $(O_B,\star) \otimes_{\ZZ_{(p)}} \ZZ_p = (O_{B_1},\star) \times (O_{B_2},\star)$ yields a corresponding decomposition $G' = G'_1 \times G'_2$. Thus to describe the structure of $G'$ (and of $G$), we may assume that $(B,\star)$ is simple and thus that we are in one of the cases of Remark~\ref{CasesOB}. Then $G'$ has the following form (where $O_K$ is the ring of integers of a finite unramified extension of $\QQ_p$).
\begin{simplelist}
\item[(AL)]
$G' \cong \Res_{O_K/\ZZ_p}\GL_{n,O_K}$ ($n \geq 1$). In this case~\eqref{RelateGGprime} is split.
\item[(AU)]
$G' \cong \Res_{O_K/\ZZ_p}G_0$, where $G_0$ is the unitary group over $K$ attached to a perfect $(O_L/O_K)$-hermitian module $(\Vcal,(\ ,\ ))$, where $L$ is quadratic imaginary field extension of $K$.
\item[(C)]
$G' \cong \Res_{O_K/\ZZ_p}\Sp_{2g,O_K}$ ($g \geq 1$).
\end{simplelist}
\end{remark}

\begin{remark}\label{Gsplit}
Some of our results only hold if $G$ is split. From the explicit description of $G_{\QQ_p}$ (e.g.~in~\cite{We1}) it follows that $G$ (or, equivalently, $G_{\QQ_p}$) is split if and only if only the case (AL) and (C) with $K = \QQ_p$ occur (notations of Remark~\ref{CasesOB} or Remark~\ref{StructureG}).
\end{remark}

\subsection{Moduli spaces of abelian varieties with $\Dscr$-structure}\label{ModSpaceAbVar}

\subsubsection*{The integral model of the Shimura variety}
Let $\AA^p_f$ be the ring of finite adeles of $\QQ$ with trivial $p$-th component and let $C^p \subset \Gbf(\AA^p_f)$ be a compact open subgroup. We denote by $\Ascr = \Ascr_{\Dscr,C^p}$ the moduli space defined by Kottwitz~\cite{Ko_ShFin} \S5. More precisely, $\Ascr$ is the category fibered in groupoids over the category of $O_{E,(p)}$-schemes whose fiber category over an $O_{E,(p)}$-scheme $S$ is the category of tuples $\APlus$ satisfying the following properties.
\begin{bulletlist}
\item
$A$ is an abelian scheme over $S$.
\item
$\iota\colon O_B \to \End(A) \otimes_{\ZZ} \ZZ_{(p)}$ is a $\ZZ_{(p)}$-algebra homomorphism; this induces an $O_B$-action on the dual abelian scheme $A\vdual$ by $b \sends \iota(b\star)\vdual$.
\item
$\lambda$ is a $\ZZ_{(p)}^{\times}$-equivalence class of an $O_B$-linear polarization of order prime to $p$ (this means $\lambda$ is considered equivalent to $\alpha\lambda$, where $\alpha\colon S \to \ZZ_{(p)}^{\times}$ is a locally constant map).
\item
$\eta$ is a level structure of type $C^p$ on $A$ (in the sense of loc.~cit.).
\end{bulletlist}
Then $\iota$ induces by functoriality a homomorphism $\iota\colon O_B \to \End_{\Oscr_S}(\Lie A)$. We require that the $\Oscr_S$-module $\Lie(A)$ with this $O_B$-action satisfies the determinant condition (Definition~\ref{DefDetCond}).

A morphism of two tuples $\APlusOne$ and $\APlusTwo$ in the fiber category over $S$ is an $O_B$-linear quasi-isogeny $f\colon A_1 \to A_2$ of degree prime to $p$ such that $f^*\lambda_2 = \lambda_1$ and such that $f^*\eta_2 = \eta_1$.

Then $\Ascr$ is an algebraic Deligne-Mumford stack which is smooth over $O_{E,(p)}$. If $C^p$ is sufficiently small, $\Ascr$ is representable by a smooth quasi-projective scheme over $O_{E,(p)}$ (see loc.~cit.\ or \cite{Lan}~1.4.1.11 and~1.4.1.13).

Fix an embedding of the algebraic closure~$\QQbar$ of $\QQ$ in~$\CC$ into some fixed algebraic closure $\QQbar_p$ of~$\QQ_p$. Via this embedding we can consider~$[\mu]$ as a $G(\QQbar_p)$-conjugacy class of cocharacters. Denote by $v\vert p$ the place of~$E$ given by the chosen embedding $\QQbar \mono \QQbar_p$ and write $E_v$ for the $v$-adic completion of~$E$. Let $\kappa = \kappa(v)$ be its residue class field. Finally let $\kgbar$ be the residue field of the ring of integers of $\QQbar_p$. This is an algebraic closure of $\kappa$.

We denote by
\begin{equation}\label{a0}
\Ascr_0 := \Ascr_{\Dscr,C^p,0} := \Ascr_{\Dscr,C^p} \otimes_{O_{E,(p)}} \kappa
\end{equation}
the special fiber of $\Ascr_{\Dscr,C^p}$ at~$v$.

\subsubsection*{Modules and $p$-divisible groups with $\Dscr$-structure}
For a $T$-valued point $\APlus$ of $\Ascr_{\Dscr,C^p}$ the $O_B$-action and the polarization induce additional structures on the $p$-divisible group and the cohomology of $A$. Let us make this more precise.

We call two perfect pairings $\lrangle_1$ and $\lrangle_2$ on a finite locally free $\Oscr_T$-module $M$ \emph{similar} if there exists an open affine covering $T = \bigcup_j V_j$ and for all $j$ a unit $c_j \in \Gamma(V_j,\Oscr_{V_j}^{\times})$ such that $\langle m,m' \rangle_2 = c_j\langle m,m' \rangle_1$ for all $m,m' \in \Gamma(V_j,M)$.

\begin{definition}\label{DefDModule}
Let $T$ be a $\ZZ_p$-scheme. A \emph{$\Dscr$-module over $T$} is a locally free $\Oscr_T$-module $M$ of rank $\dim_{\QQ}(V)$ endowed with an $O_B$-action and the similitude class of a symplectic form $\lrangle$ such that $\langle bm,m' \rangle = \langle m,b^*m'\rangle$ for all $b \in O_B$ and local sections $m,m'$ of $M$.

An \emph{isomorphism $M_1 \iso M_2$ of $\Dscr$-modules} over $T$ is an $\Oscr_T \otimes O_B$-linear symplectic similitude.
\end{definition}

For a morphism $f\colon T' \to T$ of $\ZZ_p$-schemes and a $\Dscr$-module $M$ there is the obvious notion of a pull back $f^*M = M_{T'}$ of $M$ to a $\Dscr$-module on $T'$.

The $\ZZ_p$-module $\Lambda$ together with the induced $O_B$-action and the induced pairing is a $\Dscr$-module over $\ZZ_p$. The following lemma shows in particular that all $\Dscr$-modules are \'etale locally isomorphic to the $\Dscr$-module $\Lambda_T$.

\begin{lemma}\label{LocalIsom}
Let $T$ be a $\ZZ_p$-scheme and let $M_1$ and $M_2$ be two $\Dscr$-modules over $T$. Then locally for the \'etale topology on $T$, $M_1$ and $M_2$ are isomorphic as $\Dscr$-modules. Moreover the scheme of isomorphisms $\Isomline_{\Dscr}(M_1,M_2)$ of $\Dscr$-modules is a smooth affine $T$-scheme.
\end{lemma}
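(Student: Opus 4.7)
The plan is to realize $\Isomline_{\Dscr}(M_1,M_2)$ as an \'etale torsor under the smooth affine reductive group scheme $G_T$ of Section~\ref{sec2.1} --- by construction $G=\Autline_\Dscr(\Lambda)$ --- from which both conclusions follow. It suffices to treat $(M_1,M_2)=(\Lambda_T,N)$ for an arbitrary $\Dscr$-module $N$: once every $\Dscr$-module is known to be \'etale-locally isomorphic to $\Lambda_T$, the general case reduces \'etale-locally to $\Isomline_\Dscr(\Lambda_T,\Lambda_T)=G_T$, which is smooth and affine.

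First I would verify representability of $P:=\Isomline_\Dscr(\Lambda_T,N)$ by an affine $T$-scheme. The scheme $\Isomline_{\Oscr_T}(\Lambda_T,N)$ is affine (locally a $\GL$-torsor). The $O_B$-linearity condition is cut out by the finitely many closed equations $f(bm)=bf(m)$ for $b$ ranging over a generating set of $O_B$. After passing to an fpqc cover on which the similitude classes are represented by honest pairings $\langle\cdot,\cdot\rangle_\Lambda$ and $\langle\cdot,\cdot\rangle_N$, the condition $f^*\langle\cdot,\cdot\rangle_N\in\GG_m\cdot\langle\cdot,\cdot\rangle_\Lambda$ is closed, since the $\GG_m$-orbit of a non-degenerate alternating form inside the vector bundle of all alternating forms is closed. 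Representability and affineness of $P$ then follow by fpqc descent.

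The essential step is to show $P\to T$ is surjective, i.e.\ that any two $\Dscr$-modules over an algebraically closed field $k$ are isomorphic. I would argue case by case along the decomposition $(O_B,\star)\otimes\ZZ_p=\prod_i(O_i,\star)$ of Remark~\ref{CasesOB}. Morita equivalence over each factor reduces the problem to classifying non-degenerate split, symplectic, or hermitian pairings on a free module over $O_L\otimes k$ for an unramified extension $L/\QQ_p$; the compatibility with $\star$ together with the prescribed $k$-dimension pins the numerical invariants to those of $\Lambda_k$, and in each of the cases (AL), (AU), (C) over an algebraically closed field there is then only a single isomorphism class. Equivalently, this uniqueness is a manifestation of $H^1(k,G)=0$ (Lang's theorem), which applies because our exclusion of case (D) ensures that $G$ is connected reductive. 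Once surjectivity is established, $P$ is an fpqc-torsor under $G_T$; in particular $P\to T$ is smooth by fpqc descent of smoothness from $G_T$, and a smooth surjection admits sections \'etale-locally, yielding the \'etale-local isomorphism $N\cong\Lambda_T$. The main obstacle is the case-by-case classification over algebraically closed fields; the remaining steps are formal.
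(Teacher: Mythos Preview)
Your strategy---realize $\Isomline_\Dscr(\Lambda_T,N)$ as a $G_T$-torsor by verifying non-emptiness on geometric fibres, then deduce smoothness and \'etale-local sections---is precisely the content of \cite{RZ}~Theorem~3.16, and the paper's proof is nothing more than that citation.

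One step needs more care. The assertion that ``the compatibility with $\star$ together with the prescribed $k$-dimension pins the numerical invariants to those of $\Lambda_k$'' is not justified by the total dimension alone. After $O_B\otimes k$ splits as a product of copies of $k$, a module of the correct $k$-dimension can have unequal isotypic ranks: already in case~(C) with $B$ a totally real field of degree $g\geq 2$, components of dimensions $(4,0,2,\dots,2)$ in place of $(2,\dots,2)$ still admit a perfect symplectic form on each factor. The cohomological alternative ``$H^1(k,G)=0$ (Lang's theorem)'' does not rescue this: Lang's theorem concerns finite fields, and over an algebraically closed $k$ the vanishing, while true, is circular here---reading the classification as an $H^1$ computation already presupposes that every $\Dscr$-module is an fppf-form of $\Lambda_k$, which is the very surjectivity at issue. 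What makes the Rapoport--Zink argument go through is the hypothesis that the underlying $O_B\otimes\Oscr_T$-module is locally isomorphic to $\Lambda\otimes\Oscr_T$; this is built into their notion of a polarized chain of type~$(\Lcal)$, is automatic in the paper's applications (where the modules arise from abelian schemes satisfying the determinant condition), and is the missing input that fixes the isotypic ranks in your case-by-case step.
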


\begin{proof}
This is a special case of \cite{RZ}~Theorem~3.16.
\end{proof}

\begin{example}\label{DeRhamDModule}
Let $T$ be an $O_{E_v}$-scheme and let $\APlus \in \Ascr_{\Dscr,C^p}(T)$. We denote by $H_1^{\DR}(A/T)$ the first de Rham homology, i.e.~$H_1^{\DR}(A/T)$ is the $\Oscr_T$-linear dual of the first de Rham cohomology $R^1f_*(\Omega^{\bullet}_{A/S})$. This is a locally free $\Oscr_T$-module of rank $\dim_{\QQ}(V)$. As $A \sends H_1^{\DR}(A/T)$ is a covariant functor, we obtain an $O_B$-action on $H_1^{\DR}(A/T)$. The $\ZZ_{(p)}^{\times}$-equivalence class $\lambda$ yields a similitude class of perfect alternating forms $\lrangle$ on $H_1^{\DR}(A/T)$. Thus $H_1^{\DR}(A/T)$ is a $\Dscr$-module over $T$.
\end{example}

\begin{definition}\label{DefDPdiv}
Let $T$ be a $\ZZ_p$-scheme. A \emph{$p$-divisible group with $\Dscr$-structure} over $T$ is a triple $(X,\iota,\lambda)$ consisting of a $p$-divisible group $X$ over $T$ of height $\dim_{\QQ}(V)$, an $O_B$-action $\iota\colon O_B \to \End(X)$ and a $\ZZ_p^{\times}$-equivalence class of an $O_B$-linear isomorphism $\lambda\colon X \to X\vdual$ such that $\lambda\vdual = -\lambda$. Furthermore $\Lie(X)$ with the induced $O_B$-action is assumed to satisfy the determinant condition (Definition~\ref{DefDetCond}).
\end{definition}

Here we denote by $X\vdual$ the dual $p$-divisible group. If $X$ is endowed with an $O_B$-action $\iota\colon O_B \to \End(X)$, we endow $X\vdual$ with the $O_B$ action $b \sends \iota(b\star)\vdual$.

\begin{example}\label{DStructurePDiv}
Let $T$ be an $O_{E_v}$-scheme and let $\APlus \in \Ascr_{\Dscr,C^p}(T)$. Via functoriality $\iota$ and $\lambda$ induce  the structure of a $p$-divisible group with $\Dscr$-structure on $A[p^{\infty}]$.
\end{example}

Let $(X,\iota,\lambda)$ and $(X',\iota',\lambda')$ be $p$-divisible groups with $\Dscr$-structure. An \emph{isomorphism} (resp.~an \emph{isogeny}) \emph{of $p$-divisible groups with $\Dscr$-structure} is an $O_B$-linear isomorphism (resp.~an isogeny) $f\colon X \to X'$ such that $f^*\lambda' = \lambda$ (resp.~$f^*\lambda' = p^e\lambda$ for some integer $e \geq 0$), where we set $f^*\lambda' := f\vdual \circ \lambda' \circ f$.

\subsubsection*{Reformulation of the determinant condition}
\begin{proposition}\label{ReformDetCond}
Let $T$ be an $O_{E_v}$-scheme and let $M$ be a $\Dscr$-module over $T$. Let $H \subset M$ be a totally isotropic $O_B \otimes \Oscr_T$-submodule which is locally on $T$ a direct summand as an $\Oscr_T$-module. Then the following assertions are equivalent:
\begin{equivlist}
\item
$M/H$ satisfies the determinant condition.
\item
Locally for the \'etale topology on $T$ there exists an isomorphism $\alpha\colon M \to \Lambda_T$ of $\Dscr$-modules such that the stabilizer of $\alpha(H)$ in $G_T$ is a parabolic of type $J$.
\end{equivlist}
\end{proposition}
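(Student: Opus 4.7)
The plan is the following. By Lemma~\ref{LocalIsom} I may work \'etale-locally on $T$ and so fix an identification $M = \Lambda_T$ of $\Dscr$-modules; under this identification, condition~(ii) becomes the requirement that the stabilizer of $H$ in $G_T$ be a parabolic subgroup of type $J$. I want to show that both (i) and (ii) cut out the same subfunctor of the functor $Z$ on $O_{E_v}$-schemes which sends $T$ to the set of totally isotropic $O_B\otimes\Oscr_T$-submodules $H\subset\Lambda_T$ that are locally $\Oscr_T$-direct summands. This $Z$ is representable by a closed subscheme of a Grassmannian and carries a natural left $G$-action.

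Let $Z_{\det}\subseteq Z$ be the subfunctor defined by the determinant condition on $\Lambda_T/H$, and $Z_J\subseteq Z$ the subfunctor defined by (ii). By Remark~\ref{CheckDetCond}, $Z_{\det}$ is open and closed in $Z$, and it is $G$-stable because the morphism $\delta_{\Lambda/H}$ depends only on the $O_B\otimes\Oscr_T$-isomorphism class of $\Lambda/H$. On the other hand, the scheme of parabolic subgroups of $G$ of type $J$ is the homogeneous space $G/P_J$, and assigning to such a parabolic the totally isotropic $O_B$-submodule it stabilizes identifies $Z_J$ with a single smooth projective $G$-orbit in $Z$.

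Next I would establish $Z_J\subseteq Z_{\det}$. For this, choose $\mu\in[\mu]$ defined over some extension of $E_v$ inside $\QQbar_p$ and let $\Lambda_{\QQbar_p}=\Lambda_0\oplus\Lambda_1$ be the weight decomposition for $\mu$. The identity $c\circ\mu=\id_{\GG_{m}}$ says that the pairing has weight $1$ under $\mu$, so $\Lambda_0$ is totally isotropic; by the definition of $J$ as the type of $[\mu^{-1}]$, the parabolic $P(\mu^{-1})$ equals the stabilizer of $\Lambda_0$ in $G$ and is of type $J$; and $\Lambda_{\QQbar_p}/\Lambda_0\cong\Lambda_1$ is, up to the choice of model, the space $V_1$ used to set up~\eqref{DefineDet}, and thus satisfies the determinant condition. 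Hence $\Lambda_0$ lies in both $Z_J$ and $Z_{\det}$, and $G$-stability of $Z_{\det}$ forces $Z_J=G\cdot\Lambda_0\subseteq Z_{\det}$.

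The reverse inclusion $Z_{\det}\subseteq Z_J$ is where I expect the main technical obstacle. It may be checked at geometric points, so take an algebraically closed field $k\supseteq\kappa$ and $H\in Z_{\det}(k)$. Since $O_B$ is unramified at $p$, the algebra $O_B\otimes k$ is semisimple, so the determinant condition pins down the $O_B$-isomorphism class of $\Lambda_k/H$, and perfectness of $\lrangle$ then pins down that of $H$ as well. It remains to prove a Witt-type conjugacy result: any two totally isotropic $O_B\otimes k$-direct summands of $\Lambda_k$ with $O_B\otimes k$-isomorphic quotients are $G(k)$-conjugate. Via the product decomposition of Remark~\ref{StructureG}, this reduces factor by factor to an easy statement for $\GL_n$ (any two subspaces of equal dimension are conjugate) and to the classical Witt theorem for unitary and symplectic groups over an algebraically closed field. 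Granted this, $H$ is $G(k)$-conjugate to $\Lambda_0$, hence lies in $Z_J(k)$, which completes the proof.
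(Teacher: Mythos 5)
Your overall plan coincides with the paper's: after using Lemma~\ref{LocalIsom} to reduce \'etale-locally to $M=\Lambda_T$, the paper also reduces to geometric fibers (using Remark~\ref{CheckDetCond}, plus the observation that ``being a parabolic of type $J$'' can be tested fiberwise on a flat closed subgroup scheme) and then settles the claim by going through the cases (AL), (AU), (C) of Remark~\ref{CasesOB}, citing the explicit description of the determinant condition in \cite{We1}~\S4. What you do differently is to reorganize the endgame as the equality of two $G$-stable subschemes $Z_J$ and $Z_{\det}$ of the isotropic Grassmannian, deducing $Z_J\subseteq Z_{\det}$ from the single point $\Lambda_0$ together with the identification of $V_1$ in the set-up of~\eqref{DefineDet}, and $Z_{\det}\subseteq Z_J$ from a Witt-type conjugacy over $\kgbar$. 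That is a clean packaging, but the two steps you wave at are exactly the ones carrying the mathematical content.

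Concretely: (a) For $Z_J=G\cdot\Lambda_0$ you need that, over an algebraically closed fiber, a parabolic of type $J$ stabilizes a \emph{unique} Lagrangian $O_B$-submodule $H\in Z$, i.e.\ that $Z_J\to\Par_J$, $H\mapsto\mathrm{Stab}(H)$, is a bijection; this is true because, after decomposing $O_B\otimes k$ as in Remark~\ref{CasesOB}, the relevant parabolics are maximal in each $\GL$/unitary/symplectic factor and so have a unique proper invariant subspace of the required dimension --- but this is precisely a case check, not a formal fact. (b) The Witt-type conjugacy (two Lagrangian $O_B$-submodules of $\Lambda_k$ with $O_B$-isomorphic quotients are $G(k)$-conjugate) is what the paper outsources to \cite{We1}~\S4; your reduction to the factor groups is the right move, and over $\kgbar$ case (AU) does collapse to the (AL) picture, but you should make explicit that the determinant condition forces $\rk H=\tfrac12\rk M$, hence $H=H^{\perp}$, so that ``totally isotropic'' upgrades to ``Lagrangian'' before Witt can be applied. (c) Lastly, you establish $\Lambda_0\in Z_{\det}$ only over $\QQbar_p$; to propagate $Z_J\subseteq Z_{\det}$ to the special fiber of $O_{E_v}$ you need to combine the open-closedness of $Z_{\det}$ (Remark~\ref{CheckDetCond}) with flatness of $Z_J\cong\Par_J$ over $O_{E_v}$, or simply rerun the comparison with $[\mu]$ over a geometric point in characteristic $p$. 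None of these gaps is fatal, but once they are filled in you are doing essentially the same case-by-case verification the paper points to, just organized around $G$-orbits instead of the determinant morphism directly.
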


\begin{proof}
Both conditions are local for the \'etale topology. Therefore we may assume that $T$ is the spectrum of a strictly henselian ring $A$. By Lemma~\ref{LocalIsom} there exists an isomorphism $\alpha\colon M \iso \Lambda_T := \Lambda \otimes_{\ZZ_p} \Oscr_T$ of symplectic $\Oscr_T \otimes O_B$-modules. Thus we may assume that $M = \Lambda_T$. The stabilizer $P$ of $H$ is a smooth subgroup scheme of $G_T$. The properties that $P$ is a parabolic subgroup of $G$ and that $H$ satisfies the determinant condition can be checked on the geometric fibers (Remark~\ref{CheckDetCond}). Thus we may assume that $T$ is the spectrum of an algebraically closed field. Then going through the cases in Remark~\ref{CasesOB} the claim follows from the explicit description of the determinant condition in \cite{We1} 4.
\end{proof}

\subsection{The Siegel case as an example}\label{SiegelModSpace}
As an example, we will consider the case that $B = \QQ$ and $O_B = \ZZ_{(p)}$, the Siegel case. Then $\Gbf$ is the group of symplectic similitudes of a symplectic $\QQ$-vector space $V$ of dimension $2g$ and $G$ is the group of symplectic similitudes of a symplectic $\ZZ_p$-module $\Lambda$ of rank $2g$. This is a split reductive group. The conjugacy class $[\mu]$ is the unique one such that the decomposition $V_{\CC} = V_0 \oplus V_1$ is a decomposition into totally isotropic subspaces. It is defined already over $\QQ$, i.e.~$E = \QQ$.

The field $\kappa$ equals $\FF_p$ and $\Gbar$ is the group of symplectic similitudes of a symplectic $\FF_p$-vector space. Its Weyl group $W$ is described in Appendix~\ref{SymplecticExample}. With the notations introduced there the subset $J$ of the set of simple elements is just $\{s_1,\dots,s_{g-1}\}$.

Every element $b \in B = \QQ$ acts on the $g$-dimensional vector space $V_1$ as a scalar and therefore
\[
\delta_{\Dscr}\colon \OO_B = \AA^1_{\ZZ_{(p)}} \to \AA^1_{\ZZ_{(p)}}, \qquad b \sends b^g.
\]
Therefore the condition that for a $\ZZ_{(p)}$-scheme $S$ a finite locally free $\Oscr_S$-module $\Lscr$ satisfies the determinant condition just means that the rank of this module is equal to $g$. 

Fix an integer $N \geq 1$ prime to $p$ and set $C^p := \set{g \in \Gbf(\AA^p_f)}{g \equiv 1 \mod N}$. Then $\Ascr_{\Dscr,C^p}$ is the moduli space $\Ascr_{g,N}$ over $\ZZ_{(p)}$ of $g$-dimensional abelian varieties endowed with a principal polarization and a symplectic full level-$N$ structure. 

In this case, a $\Dscr$-module over a $\ZZ_p$-scheme $T$ is a locally free $\Oscr_T$-module $M$ of rank $2g$ endowed with a similitude class of symplectic forms. An $\Oscr_S$-submodule $H$ satisfies the conditions of Proposition~\ref{ReformDetCond} if and only if $H$ is Lagrangian (Appendix~\ref{SymplecticExample}). A $p$-divisible group with $\Dscr$-structure is a $p$-divisible group $X$ of height $2g$ together with a $\ZZ_p^{\times}$-equivalence class of isomorphisms $\lambda\colon X \iso X\vdual$ such that $\lambda\vdual = -\lambda$.


\section{Definition of the Ekedahl-Oort stratification}\label{DefEOStrata}

\subsection{$\Dscr$-zips attached to $S$-valued points of $\Ascr_0$}\label{DZipToAV}
Let $S$ be a $\kappa$-scheme and let $\APlus$ be an $S$-valued point of~$\Ascr_0$. We set $M := H_1^{\DR}(A/S)$ endowed with its structure of a $\Dscr$-module (Example~\ref{DeRhamDModule}). The de Rham cohomology $H^1_{\DR}(A/S)$ is endowed with two locally direct summands, namely $f_*\Omega^1_{A/S}$ given by the Hodge spectral sequence and $R^1f_*(\Hscr^0(\Omega^{\bullet}_{A/S}))$ given by the conjugate spectral sequence. We obtain locally direct summands of $M$ by defining
\[
C := (f_*\Omega^1_{A/S})^{\perp}, \qquad D := (R^1f_*(\Hscr^0(\Omega^{\bullet}_{A/S})))^{\perp}.
\]
The formation of the de Rham cohomology endowed with these two spectral sequences is functorial in $A$ in a contravariant way. The $\Oscr_S$-linear submodules $C$ and~$D$ of $M$ are then functorial, $O_B$-invariant and locally on $S$ direct summands of rank equal to $\dim(A/S)$.
We also recall that there is an isomorphism $f_*\Omega^1_{A/S} \iso \Lie(A)\vdual$ which is functorial in $A$. Thus we obtain a functorial identification
\begin{equation}\label{IdentifyC}
C = \Ker(H_1^{\DR}(A/S) \to \Lie(A)).
\end{equation}
Moreover, $D$ and~$C$ are totally isotropic with respect to $\lrangle$ (e.g., see \cite{DelPap}~1.6).

For every $\Oscr_S$-module $X$ we set $X^{(p)} := \Frob_S^*X$, where $\Frob_S\colon S \to S$ is the absolute Frobenius. Dualizing the construction in \cite{MW}~Section~7.5 shows that the Cartier isomorphism induces isomorphisms
\[
\varphi_0\colon (M/C)^{(p)} \iso D, \qquad \varphi_1\colon C^{(p)}
\iso M/D,
\]
The functoriality in $A$ of the Cartier isomorphism shows that $\varphi_0$ and $\varphi_1$ are $O_B$-linear and the tuple $(M, C, D, \varphi_0, \varphi_1)$ is a $\Dscr$-zip over $S$ in the following sense.

\begin{definition}
Let $S$ be a $\kappa$-scheme. A $\Dscr$-zip is a tuple $\Mline = (M, C, D, \varphi_0, \varphi_1)$ satisfying the following conditions.
\begin{definitionlist}
\item
$M$ is a $\Dscr$-module over $S$.
\item
$C$ and $D$ are $O_B$-invariant totally isotropic $\Oscr_S$-submodules of $M$ which are locally on $S$ direct summands such that $M/C$ satisfies the determinant condition. This implies that $C$ and $D$ both have rank $(1/2)\dim_{\CC}(V)$ and $C^{\perp} = C$ and $D^{\perp} = D$.
\item
$\varphi_0\colon (M/C)^{(p)} \iso D$ and $\varphi_1\colon C^{(p)} \iso M/D$ are $O_B \otimes \Oscr_S$-linear isomorphisms such that for some representative $\lrangle$ in the similitude class of symplectic forms on $M$ the following diagram commutes
\[\xymatrix{
(M/C)^{(p)} \ar[d] \ar[r]^{\varphi_0} & D \ar[d] \\
(C\vdual)^{(p)} & \ar[l]_{\varphi\vdual_1} (M/D)\vdual, 
}\]
where the vertical maps are the isomorphisms induced by the isomorphism $M \iso M\vdual$ given on local sections by $m \sends \langle - , m \rangle$.
\end{definitionlist}
\end{definition}
An \emph{isomorphism of $\Dscr$-zips} over $S$ is an isomorphism of $\Dscr$-modules preserving the submodules $C$ and $D$ and commuting with $\varphi_0$ and $\varphi_1$. We obtain the category of $\Dscr$-zips over $S$ (where every morphism is an isomorphism). Moreover we have the obvious notion of a pullback $f^*$ of a $\Dscr$-zip for a morphism $f\colon S' \to S$ of $\kappa$-schemes. Thus we obtain a category $\DZip$ fibered in groupoids over the category of $\kappa$-schemes. Clearly this is a stack for the fpqc-topology. Moreover it is easy to see that $\DZip$ is an algebraic stack in the sense of Artin.

\begin{example}\label{DieudonneZip}
Let $S = \Spec k$, where $k$ is a perfect field, and let $\APlus \in \Ascr_0(k)$. Let $\Mtilde := H^1_{\rm cris}(A/W(k))\vdual$ the covariant Dieudonn\'e module of the $p$-divisible group of $A$. It is endowed with a $\sigma$-linear endomorphism $F$ such that $V := F^{-1}p$ exists on $\Mtilde$. Thus $M = M(A) := \Mtilde/p\Mtilde$ is a $k$-vector space endowed with a $\Frob_k$-linear map $F\colon M \to M$ and a $\Frob^{-1}_k$-linear map $V\colon M \to M$ such that $\Ker(F) = {\rm Im}(V)$ and $\Ker(V) = {\rm Im}(F)$. The formation of $(M,F,V)$ is functorial in $A$ and we obtain an $O_B$-action and a similitude class of symplectic forms on $M$ induced by $\iota$ and $\lambda$, respectively. We set $C := \Ker(F)$ and $D := \Ker(V)$ and denote by $\varphi_0\colon (M/C)^{(p)} \iso D$ the isomorphism induced by $F$ and by $\varphi_1\colon C^{(p)} \to M/D$ the isomorphism induced by $V^{-1}$. Then $(M, C, D, \varphi_0, \varphi_1)$ is a $\Dscr$-zip. By \cite{Oda}~Corollary~5.11 there is a functorial isomorphism $M(A) \iso H_1^{\DR}(A/k)$ which is an isomorphism of $\Dscr$-zips.
\end{example}

Attaching to $\APlus$ the tuple $(M, \lrangle, C, D, \varphi_0, \varphi_1)$ defines a morphism of algebraic stacks
\begin{equation}\label{DefZeta}
\zeta\colon \Ascr_0 \to \DZip.
\end{equation}
We will see in Section~\ref{GroupDZip} that there are only finitely many isomorphism classes of $\Dscr$-zips over any fixed algebraically closed extension $k$ of $\kappa$ and that the set of isomorphism classes does not depend on $k$. 

\begin{definition}\label{FirstDefEO}
Let $[\Mline]$ be the isomorphism class of a $\Dscr$-zip over the algebraic closure $\bar\kappa$ of $\kappa$. We consider $[\Mline]$ as a (locally closed) point of the underlying topological space of $\DZip \otimes_{\kappa} \kgbar$. We call $\Ascr^{[\Mline]}_0 := \zeta^{-1}([\Mline]) \subseteq \Ascr_0 \otimes \kgbar$ the \emph{Ekedahl-Oort stratum} attached to $[\Mline]$.
\end{definition}

$\Ascr^{[\Mline]}_0$ is locally closed in the underlying topological space of $\Ascr_0 \otimes_{\kappa} \bar\kappa$ and we endow it with its reduced structure. A $\kgbar$-valued point $(A,\iota,\lambda,\eta)$ of $\Ascr_0$ is in $\Ascr^{[\Mline]}_0(\kgbar)$ if and only if the attached $\Dscr$-zip is isomorphic to $\Mline$.

In Section \ref{AllToGroup} we recall the classification of $\Dscr$-zips which yields a different index set for the Ekedahl-Oort strata.

\subsection{Ekedahl-Oort strata in the Siegel case I}\label{EOSiegel1}
We continue the example of the Siegel case of Section~\ref{SiegelModSpace}. Via the construction in Example~\ref{DieudonneZip} a $\Dscr$-zip over $\kgbar$ corresponds to a tuple $\Mline = (M,F,V,\lrangle)$, where $M$ is a $\kgbar$-vector space of dimension $2g$ together with a $\Frob_{\kgbar}$-linear map $F\colon M \to M$, a $\Frob_{\kgbar}^{-1}$-linear map $V\colon M \to M$, and a similitude class of symplectic pairings $\lrangle$ such that $\Ker(F) = \Im(V)$, $\Ker(V) = \Im(F)$ and such that
\[
\langle Fx, y \rangle = \langle x,Vy \rangle^p, \qquad x, y \in M.
\]
We call such a tuple $\Mline$ a \emph{symplectic Dieudonn\'e space}.

If $\Ascr_0 = \Ascr_{g,N}$ is the moduli space of principally polarized abelian varieties $(A,\lambda)$ of dimension $g$ with full level $N$ structure $\eta$, then $\Ascr_0^{[\Mline]}$ is the reduced locally closed substack of $\Ascr_0 = \Ascr_{g,N}$ such that $\Ascr_0^{[\Mline]}(\kgbar)$ consists of those $\kgbar$-valued points $(A,\lambda,\eta)$ of $\Ascr_{g,N}$ such that the covariant Dieudonn\'e module $(M',F',V')$ of $A[p]$ together with the similitude class of symplectic pairings $\lrangle'$ induced by $\lambda$ is isomorphic to $[\Mline]$. Thus we obtain in this case the Ekedahl-Oort stratification as defined in~\cite{Oo1}.

\subsection{Smooth coverings of $\Ascr_0$}\label{smoothcover}

We define two smooth coverings $\Ascr^{\#}_0$ and $\Ascrtilde_0$ of~$\Ascr_0$ as follows:

\smallskip

For every $\kappa$-scheme~$S$ the $S$-valued points of $\Ascr^{\#}_0$ are given by tuples $(A,\iota,\lambda,\eta,\alpha)$ where $(A,\iota,\lambda,\eta) \in \Ascr_0(S)$ and where $\alpha$ is an $O_B/pO_B$-linear symplectic similitude $H_1^{\DR}(A/S) \iso \Lambda \otimes_{\ZZ_p} \Oscr_S$.

Therefore $\Ascr^{\#}_0$ is a $\Gbar$-torsor over~$\Ascr_0$ for the \'etale topology. Here $\Gbar$ is the smooth group scheme obtained as the special fiber of $G$.

\smallskip

The $S$-valued points of $\Ascrtilde_0$ are given by tuples
$(A,\lambda,\iota,\eta, \alpha, C',D')$ with
$(A,\lambda,\iota,\eta, \alpha) \in \Ascr^{\#}_0$ and where
$C'$ and~$D'$ are $O_B/pO_B$-invariant totally isotropic
complements of $\alpha(C)$ and of~$\alpha(D)$ in $\Lambda \otimes_{\ZZ_p}
\Oscr_S$, respectively.


\section{Dieudonn\'e displays with additional structure}\label{sec3}

\subsection{Dieudonn\'e displays (after Zink and Lau)}\label{DieudonneDisplay}
In this section we always denote by $(R,\mfr)$ a complete local noetherian ring
with perfect residue field of characteristic $p$. We also assume that $p \geq 3$ or $pR = 0$.

In this section we will endow Dieudonn\'e displays in the sense of
Zink~\cite{Zink_Dieudonne} with additional structure. We start by recalling some facts on Dieudonn\'e displays.

\subsubsection*{The Zink ring}
We use the notations and terminology of Lau's paper~\cite{Lau_DieudonneDisplays}. In the sense of~\cite{Lau_DieudonneDisplays} Definition~1.2 the ring $R$ is admissible topological. In particular we have the Zink ring $\WW(R)$ which is endowed with a Frobenius $\sigma$ and a Verschiebung $\tau$ (denoted by $f$ and $v$ in loc.~cit.). By loc.~cit.\ it has the following properties.
\begin{assertionlist}
\item
The Zink ring $\WW(R)$ is a subring of the Witt ring $W(R)$ stable under $\sigma$ and $\tau$. Under the projection $W(R) \to W(k)$ it maps surjectively onto $W(k)$. In particular $\WW(k) = W(k)$.
\item
The kernel of $\WW(R) \to W(k)$ is $\WW(\mfr)$, the set of elements $x = (x_0,x_1,\dots) \in W(\mfr)$ such that the sequence $(x_i)_i$ converges to zero for the $\mfr$-adic topology.
\item
There exists a unique ring homomorphism $s\colon W(k) \to \WW(R)$ which is a section of the projection $\WW(R) \to W(k)$. Thus $\WW(R) = s(W(k)) \oplus \WW(\mfr)$.
\item
$\WW(R) = \limproj_n \WW(R/\mfr^n)$ is a local $p$-adically complete ring with residue field $k$.
\item
If $R$ is Artinian, then $\WW(\mfr)$ consists only of nilpotent elements. In particular $\WW(R)_{\rm red} = W(k)$.
\end{assertionlist}
The first two properties characterize $\WW(R)$. Let $\II_R$ be the kernel of $w_0\colon \WW(R) \to R$. Then $\II_R$ is the image of $\tau$. We denote by $\sigma_1\colon \II_R \to \WW(R)$ the inverse of $\tau$.

For every $\WW(R)$-module $M$ we set $M^{\sigma} =  \WW(R) \otimes_{\sigma,\WW(R)} M$. For two $\WW(R)$-modules $M$ and $N$ we identify $\sigma$-linear maps $M \to N$ with linear maps $M^{\sigma} \to N$. If $M$ is of the form $M = \Lambda \otimes_{\ZZ_p} \WW(R)$ for some $\ZZ_p$-module $\Lambda$ we have a canonical isomorphism $M^\sigma \cong M$ which we use to identify these two $\WW(R)$-modules.

Let $X$ be any $\WW(R)$-scheme. Then the ring endomorphism $\sigma$ of $\WW(R)$ induces a map $\sigma\colon X(\WW(R)) \to X(\WW(R))$. We will use this notation in particular for $X = G
\otimes_{\ZZ_p} \WW(R)$ and for the scheme of parabolics of $G \otimes_{\ZZ_p} \WW(R)$.

We will use the following result.

\begin{lemma}\label{ZinkHensel}
Let $R$ be a ring as above and let $k$ be its (perfect) residue field. For any smooth $W(k)$-scheme $X$ the canonical map
\[
c_X\colon X(\WW(R)) \to X(R) \times_{X(k)} X(W(k))
\]
is surjective.
\end{lemma}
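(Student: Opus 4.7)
The plan is to factor $c_X$ as
\[
X(\WW(R)) \lto X(R \times_k W(k)) \lto X(R) \times_{X(k)} X(W(k))
\]
and show both maps are surjective (indeed, the right-hand map is bijective). Setting $B := R \times_k W(k)$, the second map is bijective because any pair of morphisms $\Spec R \to X$ and $\Spec W(k) \to X$ agreeing on $\Spec k$ factors through a common affine open of $X$ (both source rings being local with residue field $k$), and for affine $X$ the identity $\Hom(\Spec B,X) = \Hom(\Spec R,X) \times_{\Hom(\Spec k,X)} \Hom(\Spec W(k),X)$ holds by definition of $B$ as a fiber product of rings.

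The heart of the argument is the surjectivity of $X(\WW(R)) \to X(B)$, for which I would first establish that the natural ring map $\pi\colon \WW(R) \to B$ is surjective with kernel $J$ that is topologically nilpotent. Surjectivity of $\pi$: for $(r,w) \in B$, let $s(w) \in \WW(R)$ map to $(r',w) \in B$; then $r \equiv r' \pmod{\mfr}$, and the Teichm\"uller representative $[r-r'] \in \WW(\mfr)$ maps to $(r-r',0)$, so $s(w)+[r-r']$ is a preimage. The kernel $J = \II_R \cap \WW(\mfr)$ consists of Witt vectors $(0,x_1,x_2,\ldots)$ with $x_i \in \mfr$ converging to $0$, and equals $\tau(\WW(\mfr))$.

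The key estimate is $J^{n+1} \subseteq p^n \WW(R)$, derived from the Verschiebung identity $\tau(x)\tau(y) = p\tau(xy)$ on $\WW(R)$ (inherited from $V(x)V(y) = pV(xy)$ on $W(R)$): this gives $J^2 \subseteq p\tau(\WW(\mfr)) = pJ$, and induction handles the rest. Setting $A_n := \WW(R)/p^n\WW(R)$ and $J_n := (J + p^n\WW(R))/p^n\WW(R)$, the quotient $A_n \to A_n/J_n = B/p^n B$ has nilpotent kernel (with $J_n^{n+1} = 0$). I would then construct $a_n \in X(A_n)$ inductively, lifting both $b \bmod p^n \in X(B/p^n B)$ and the previously chosen $a_{n-1}$: this amounts to a lift through the surjection $A_{n+1} \twoheadrightarrow A_n \times_{B/p^n B} B/p^{n+1}B$, whose kernel $p^n A_{n+1} \cap J_{n+1}$ is square-zero for $n \geq 1$ (using $J^2 \subseteq pJ$ together with $p^{n+1}=0$ in $A_{n+1}$), while $J_1^2 = 0$ handles the base case. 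Smoothness of $X$ then supplies the required lift at each stage.

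Finally, $p$-adic completeness yields $\WW(R) = \limproj A_n$, and since smoothness of $X$ makes it locally of finite presentation, the compatible system $\{a_n\}$ assembles into an element $a \in X(\WW(R))$ whose image under $c_X$ is the prescribed pair. The main technical obstacle is the inductive bookkeeping -- keeping lifts simultaneously compatible with the tower of truncations in $\WW(R)$ and with the given $b \in X(B)$ -- and the square-zero kernel estimate coming from $J^{n+1} \subseteq p^n\WW(R)$ is precisely the input that makes smoothness of $X$ directly applicable at every stage.
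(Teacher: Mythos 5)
Your proof is correct, but it uses a genuinely different tower than the paper's. The paper's proof (attributed to Lau in the acknowledgements) sets $\WW_n(R) := R \times_{R/\mfr^n} \WW(R/\mfr^n)$, observes that $\WW(R) = \limproj_n \WW_n(R)$ with nil kernels (since $\WW(\mfr^{n-1}/\mfr^n)$ consists of nilpotents for the Artinian quotients), and simply invokes formal smoothness to surject $X(\WW(R)) = \limproj X(\WW_n(R))$ onto $X(\WW_1(R)) = X(R \times_k W(k))$ --- four lines total. You instead filter $\WW(R)$ by $p^n\WW(R)$, which requires first establishing the surjectivity of $\WW(R) \twoheadrightarrow B := R \times_k W(k)$ (via a Teichm\"uller correction), identifying the kernel $J = \tau(\WW(\mfr))$, deriving the estimate $J^{n+1} \subseteq p^n\WW(R)$ from the Verschiebung identity $\tau(x)\tau(y) = p\tau(xy)$, and running a diagonal induction through the surjections $A_{n+1} \twoheadrightarrow A_n \times_{B/p^nB} B/p^{n+1}B$; each step is verifiable, and the square-zero kernel observations are right. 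What the paper's route buys is economy: the fiber-product tower $\WW_n(R)$ produces the target $R \times_k W(k)$ as its bottom term automatically and the nilpotence of the kernels is already recorded as a basic property of $\WW$ on Artinian rings, so there is no need for the Verschiebung estimate or the two-tower bookkeeping. What your route buys is that it makes the "topological nilpotence" of $\ker(\WW(R) \to B)$ fully explicit (the bound $J^{n+1} \subseteq p^n\WW(R)$), isolates exactly which structural facts about the Zink ring drive the argument, and in particular exhibits the surjectivity $\WW(R) \twoheadrightarrow R \times_k W(k)$ as a clean intermediate statement rather than implicitly burying it in the identification $\WW_1(R) = R \times_k W(k)$.
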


\begin{proof}
Let $\WW_n(R)=R\times_{R/\mathfrak{m}^n}\WW(R/\mathfrak{m}^n)$. Then $\WW(R) = \limproj_n \WW_n(R)$. The transition maps $\WW_n{R}\rightarrow \WW_{n-1}(R)$ have nilpotent kernel. The formal smoothness of $X$ implies that $X(\WW(R))=\limproj X(\WW_n(R))$ maps surjectively to $X(\WW_1(R))=X(R\times_k W(k))=X(R)\times_{X(k)}X(W(k))$.
\end{proof}

\subsubsection*{Definition of Dieudonn\'e displays}
Recall (\cite{Lau_DieudonneDisplays}~Definition~2.6 and Section~2.8) that a \emph{Dieudonn\'e display} over $R$ is a tuple $(\Pcal,\Qcal,F,F_1)$ where $\Pcal$ is a finitely generated projective $\WW(R)$-module, $\Qcal$ is a submodule such that there exists a decomposition $\Pcal = \Scal \oplus \Tcal$ of $\WW(R)$-modules with $\Qcal = \Scal \oplus \II_R\Tcal$ (called a \emph{normal decomposition}), and where $F\colon \Pcal \to \Pcal$ and $F_1\colon \Qcal \to \Pcal$ are $\sigma$-linear maps of $\WW(R)$-modules such that
\begin{equation}\label{Eq:CompF1}
F_1(ax) = \sigma_1(a)F(x),\qquad \text{for all $a \in \II_R$, $x \in \Pcal$}
\end{equation}
and such that $F_1(\Qcal)$ generates the $\WW(R)$-module $\Pcal$.

These axioms imply
\begin{equation}\label{Eq:CompF2}
F(x) = pF_1(x),\qquad \text{for all $x \in \Qcal$.}
\end{equation}

Furthermore, as $\WW(R)$ is local, $\Pcal$ is in fact a free $\WW(R)$-module.

\begin{remark}\label{PsiDisplay}
Let $\Scal$ and $\Tcal$ be finitely generated projective $\WW(R)$-modules. Set $\Pcal := \Scal \oplus \Tcal$ and $\Qcal := \Scal \oplus \II_R\Tcal$. Using~\eqref{Eq:CompF1} and~\eqref{Eq:CompF2} it follows that there is a bijection between the set of pairs $(F,F_1)$ such that $(\Pcal,\Qcal,F,F_1)$ is a Dieudonn\'e display and the set of isomorphisms $\Psi\colon \Pcal^{\sigma} \iso \Pcal$ of $\WW(R)$-modules given by
\[
(F\colon \Pcal^{\sigma} \to \Pcal,F_1\colon \Qcal^{\sigma} \to \Pcal) \sends \Psi := F_1|_{\Scal^{\sigma}} \oplus F|_{\Tcal^{\sigma}}\colon (\Scal \oplus \Tcal)^{\sigma} \to \Pcal.
\]
We call $(\Scal,\Tcal,\Psi)$ a \emph{split Dieudonn\'e display} over $R$.
\end{remark}

\subsubsection*{Duality for Dieudonn\'e displays}
We recall Lau's duality for Dieudonn\'e displays (\cite{Lau_Duality}). A bilinear form between Dieudonn\'e displays $\Pscr = (\Pcal,\Qcal,F,F_1)$ and $\Pscr' = (\Pcal',\Qcal',F',F'_1)$ is a $\WW(R)$-bilinear map $\beta\colon \Pcal \times \Pcal' \to \WW(R)$ such that $\beta(\Qcal,\Qcal') \subseteq \II_R$ and such that
\begin{equation}\label{BilinarDisplay}
\beta(F_1x,F'_1x') = \sigma_1\beta(x,x')
\end{equation}
for $x \in \Qcal$ and $x' \in \Qcal'$. There exists a Dieudonn\'e display $\Pscr\vdual = (\Pcal\vdual,\Qcal\vdual,F\vdual,F\vdual_1)$, the \emph{dual Dieudonn\'e display} of $\Pscr$, such that for every Dieudonn\'e display $\Pscr'$ there exists a functorial isomorphism
\[
\Hom(\Pscr',\Pscr\vdual) = \{\text{Bilinear forms between $\Pscr$ and $\Pscr'$}\}.
\]
The tautological pairing $\gamma$ between $\Pscr$ and $\Pscr\vdual$ is perfect and $\Pcal\vdual$ is the $\WW(R)$-linear dual of $\Pcal$. Given a normal decomposition $\Pcal = \Scal \oplus \Tcal$ for $\Pscr$ there is a unique normal decomposition $\Pcal\vdual = \Scal\vdual \oplus \Tcal\vdual$ for $\Pscr\vdual$ such that $\gamma(\Scal,\Scal\vdual) = \gamma(\Tcal,\Tcal\vdual) = 0$. The attached linear operators $\Psi$ and $\Psi\vdual$ (Remark~\ref{PsiDisplay}) satisfy
\begin{equation}\label{PsiBilinear}
\gamma(\Psi(x),\Psi\vdual(x')) = \gamma(x,x')
\end{equation}
for $x \in \Pcal^{\sigma}$ and $x' \in \Pcal^{\prime\sigma}$.

\subsubsection*{Dieudonn\'e displays and $p$-divisible groups}
By~\cite{Zink_Dieudonne} and~\cite{Lau_Duality} (see also~\cite{Lau_DieudonneDisplays} Theorem~3.24 and Corollary~3.26) we have:

\begin{theorem}\label{DieudonneEquiv}
There exists an equivalence $\Phi_R$ between the category of $p$-divisible groups over $R$ and the category of Dieudonn\'e displays over $R$. This equivalence is compatible with base change by continuous ring homomorphisms $R \to R'$ and with duality.
\end{theorem}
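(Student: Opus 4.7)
The plan is to follow the Zink--Lau strategy of building the equivalence in stages, reducing at each step to a classification of deformations.

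First I would handle the case of the residue field $k$: since $R$ is local with perfect residue field, $\WW(k) = W(k)$ and $\II_k = pW(k)$, so a Dieudonn\'e display over $k$ is the same datum as a finite free Dieudonn\'e module $(\Pcal,F,V)$ with $V = pF_1^{-1}$, and the equivalence $\Phi_k$ with $p$-divisible groups over $k$ is the classical covariant Dieudonn\'e equivalence. This gives the starting point and, combined with the fact that $\WW(R)_{\rm red} = W(k)$ on artinian quotients, determines $\Phi_R$ on the level of the reduction.

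Next I would construct the functor $\Phi_{R_n}$ on the artinian quotients $R_n := R/\mfr^n$ using the crystalline Dieudonn\'e module. The key point is that $\WW(R_n) \to R_n$ is a PD-thickening for Zink's natural divided powers on $\II_{R_n}$, so for a $p$-divisible group $X$ over $R_n$ one defines $\Pcal_n := \DD(X)(\WW(R_n))$ with its Frobenius, and takes $\Qcal_n$ to be the preimage of the Hodge filtration $\Lie(X)\vdual \subset \Pcal_n/\II_{R_n}\Pcal_n$; the divisibility property of $F$ on $\Qcal_n$ supplies $F_1$ uniquely. To prove this is an equivalence, I would argue by induction on $n$ using Grothendieck--Messing: deformations of a $p$-divisible group along $R_n \to R_{n-1}$ are classified by lifts of the Hodge filtration, and deformations of a Dieudonn\'e display are classified by lifts of $\Qcal_{n-1}/\II_{R_{n-1}}\Pcal_{n-1}$ to a direct summand of $\Pcal_n/\II_{R_n}\Pcal_n$, and the functor $\Phi_{R_n}$ matches the two classifications. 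For $pR = 0$ one uses the PD structure on the ideal $p\WW(R) + \II_R$ directly; for $p \geq 3$ Zink's divided powers on $\II_R$ exist because the required denominators are absorbed.

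Then I would pass to the limit: since $R = \limproj R_n$, $\WW(R) = \limproj \WW(R_n)$, and the categories of $p$-divisible groups and of Dieudonn\'e displays over $R$ are both equivalent to the $2$-limits of their versions over the $R_n$ (for $p$-divisible groups this is algebraization via Messing, for Dieudonn\'e displays it is elementary because finitely generated projective modules lift uniquely). Hence $\Phi_R$ is an equivalence. For base change along a continuous $R \to R'$, one uses that $\WW(-)$ is functorial and compatible with $\sigma$ so that pulling back a Dieudonn\'e display makes sense, and that the crystalline Dieudonn\'e functor commutes with base change of PD-thickenings. For duality, one checks that the Cartier dual $X\vdual$ corresponds to the dual Dieudonn\'e display $\Pscr\vdual$ by comparing the bilinear pairing $\gamma$ with the Poincar\'e pairing on crystalline cohomology; the compatibility~\eqref{PsiBilinear} is the shadow of the crystalline duality.

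The main obstacle is the essential surjectivity on artinian rings with $p \in \mfr$ but $pR \ne 0$. Classical displays require $V$-nilpotence and so miss the unit-root (\'etale) part of $X$; what makes Zink's ring $\WW(R)$ essential is that its section $s\colon W(k) \to \WW(R)$ allows one to split off the \'etale part canonically over the residue field and then propagate it through the PD-thickenings. Handling this interaction, together with verifying Zink's divided powers genuinely extend the crystal in the way required by Grothendieck--Messing, is the delicate technical heart of the proof, and is where Lau's refinement over Zink's original argument is used.
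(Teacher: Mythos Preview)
The paper does not give its own proof of this theorem: it is quoted directly from the literature, with the line ``By~\cite{Zink_Dieudonne} and~\cite{Lau_Duality} (see also~\cite{Lau_DieudonneDisplays} Theorem~3.24 and Corollary~3.26) we have:'' preceding the statement. So there is no in-paper argument to compare against; the theorem functions here as a black box imported from Zink and Lau.

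Your sketch is a faithful high-level summary of the strategy in those references, and the ingredients you name (classical Dieudonn\'e theory over $k$, the PD-thickening $\WW(R_n)\to R_n$, Grothendieck--Messing deformation theory matched against lifts of $\Qcal/\II\Pcal$, passage to the limit, and the special handling of the \'etale part via the section $s$) are the right ones. Two remarks on emphasis. First, your outline hews more closely to Lau's crystalline reinterpretation in~\cite{Lau_DieudonneDisplays} than to Zink's original construction in~\cite{Zink_Dieudonne}, which builds the functor from displays to formal groups more directly rather than through the Dieudonn\'e crystal; both routes lead to the same equivalence, but they are not literally the same argument. Second, the assertion that deformations of a Dieudonn\'e display along $R_n\to R_{n-1}$ are classified by lifts of the Hodge line is itself a nontrivial lemma (essentially the rigidity of $(F,F_1)$ once the normal decomposition is fixed), and your sketch treats it as given; in a full write-up it would need its own verification.
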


Moreover, let $A$ be an abelian scheme over $R$ and let $\Pscr = (\Pcal,\Qcal,F,F_1)$ be the Dieudonn\'e display of the $p$-divisible group of $A$. Then by the construction in \cite{Lau_DieudonneDisplays}~\S3 (relating the Dieudonn\'e display and the (covariant) Dieudonn\'e crystal) and by~\cite{BBM} Chap.~5 we have a functorial isomorphism of exact sequences
\begin{equation}\label{HodgeFil}\xymatrix{
0 \ar[r] & (R^1f_*\Oscr_A)\vdual \ar[r] \ar[d] & H_1^{\DR}(A/R) \ar[r] \ar[d] & \Lie(A) \ar[r] \ar[d] & 0 \\
0 \ar[r] & \Qcal/\II_R\Pcal \ar[r] & \Pcal/\II_R\Pcal \ar[r] & \Pcal/\Qcal \ar[r] & 0.
}\end{equation}
Note that $(R^1f_*\Oscr_A)\vdual \cong (\Lie A\vdual)\vdual$.


\subsection{$\Dscr$-Dieudonn\'e displays}
We continue to assume that $R$ is a complete local noetherian ring with perfect residue field of characteristic $p$ such that $p \geq 3$ or $pR = 0$. We will now endow Dieudonn\'e displays with a ``$\Dscr$-structure''. For our purpose it suffices to do this only for Dieudonn\'e displays with a fixed normal decomposition and whose underlying $\WW(R)$-module is $\Lambda \otimes_{\ZZ_p} \WW(R)$. The general definition is similar, but rather tedious and thus omitted.

\begin{definition}\label{DefDDisplay}
Assume that $R$ is in addition an $O_{E_v}$-algebra. We set $M := \Lambda \otimes_{\ZZ_p} \WW(R)$. Then $M$ carries a symplectic form $\lrangle$ and an $O_B$-action.
A \emph{split $\Dscr$-Dieudonn\'e display over $R$} consists of a tuple $(\Scal,\Tcal,\Psi)$ where $\Scal$ and $\Tcal$ are totally isotropic $O_B \otimes \WW(R)$-submodules of $M$ such that $\Scal \oplus \Tcal = M$ and where $\Psi\colon M^{\sigma} \to M$ is a symplectic $\WW(R) \otimes O_B$-linear similitude. We assume that $M/\Scal$ satisfies the determinant condition.
\end{definition}

Assume in addition that $R$ is a $\kappa$-algebra. We will now attach to an $R$-valued point $(A,\lambda,\iota,\eta,\alpha)$ of $\Ascr^\#_0$ (Section~\ref{smoothcover}) a split $\Dscr$-Dieudonn\'e display. Let $(\Pcal,\Qcal,F,F_1)$ be the Dieudonn\'e display associated with the $p$-divisible group of $A$ (Theorem~\ref{DieudonneEquiv}). Then $\lambda$ induces by functoriality a similitude class of perfect alternating forms $\lrangle$ on the free $\WW(R)$-module $\Pcal$. The $O_B$-action by $\iota$ induces an $O_B$-action on $\Pcal$ and we obtain a $\Dscr$-module $\Pcal$ over $\WW(R)$. Moreover $O_B$ acts by homomorphisms of Dieudonn\'e displays and $\lrangle$ is a bilinear form of Dieudonn\'e displays. Therefore $\Qcal$ is a $\WW(R) \otimes O_B$-submodule of $\Pcal$, and $F$ and $F_1$ are $O_B$-linear.

As $(\WW(R),\II_R)$ is henselian, Lemma~\ref{LocalIsom} implies that there is a $\WW(R) \otimes O_B$-linear symplectic similitude
\[
\agtilde\colon \Pcal \iso \Lambda \otimes \WW(R)
\]
whose reduction modulo $\II(R)$ is equal to $\alpha$. We use $\agtilde$ to identify $\Pcal$ and $\Lambda \otimes \WW(R)$.

By~\eqref{HodgeFil} we have a split exact sequence of free $R$-modules
\[
0 \to C \to \Pcal/\II_R \Pcal \to \Pcal/\Qcal \to 0
\]
where $C$ is as in Section \ref{DZipToAV}. In the category of $\WW(R) \otimes O_B$-modules we choose a totally isotropic direct summand $\Scal$ of $M$ such that its reduction modulo $\II_R$ is equal to $C$ and a totally isotropic complement $\Tcal$ of $\Scal$ in $M$. Let $\Psi\colon \Pcal^{\sigma} \iso \Pcal$ be the isomorphism of $\Dscr$-modules attached to $(F,F_1)$ (Remark~\ref{PsiDisplay}). Then $(\Scal,\Tcal,\Psi)$ is a split $\Dscr$-Dieudonn\'e display.


\section{Group-theoretic reformulation}\label{AllToGroup}

\subsection{Group-theoretic description of $\Dscr$-displays}
We are now going to give a group-theoretic reformulation of the split Dieudonn\'e displays with additional structure of Definition~\ref{DefDDisplay}. We continue to assume that $R$ is a complete local noetherian $O_{E_v}$-algebra with perfect residue field $k$ of characteristic $p$ and that $p \geq 3$ or $pR = 0$. Recall that $G$ denotes the quasi-split reductive $\ZZ_p$-group scheme of $O_B$-linear symplectic similitudes of $\Lambda$ and that $(W,I)$ denotes the Weyl group of $G$ together with its set of simple reflections (Appendix~\ref{Weylgroups}).

\begin{definition}\label{DefYtilde}
For a subset $J\subseteq I$ defined over $\kappa$ let $\Ytilde_J$ be the $O_{E_v}$-scheme such that for each $O_{E_v}$-scheme $X$ the $X$-valued points of $\Ytilde_J$ are triples $(P,L,g)$, where $P$ is a parabolic subgroup of $G_X$ of type $J$, where $L$ is a Levi subgroup of $P$, and where $g \in G(X)$. By Appendix~\ref{Parabolic}, $\Ytilde_J$ is a quasi-projective smooth scheme.
\end{definition}

\begin{construction}\label{DieudonneToGroup}
Let $(\Scal,\Tcal,\Psi)$ be a split $\Dscr$-Dieudonn\'e display over $R$ (Definition~\ref{DefDDisplay}). We construct an associated element $(\Ptilde,\Ltilde,\gtilde)$ in $\Ytilde_J(\WW(R))$ as follows. We define $\Ptilde$ as the
stabilizer of the flag $0 \subset \Scal \subset M := \Lambda \otimes \WW(R)$ in $G_{\WW(R)}$. Then $\Ptilde$ is a parabolic of the associated type $J$ by Proposition~\ref{ReformDetCond}. Furthermore, $\Ltilde$ is defined as the stabilizer of the decomposition $M = \Scal \oplus \Tcal$ in $G_{\WW(R)}$. Then $\Ltilde$ is a Levi subgroup of $\Ptilde$. Finally let $\gtilde$ be the composition
\[
M \liso M^{\sigma} \ltoover{\Psi} M.
\]
\end{construction}

\begin{lemma}\label{DescribeSplit}
The construction above defines a bijection between the set of all split $\Dscr$-Dieudonn\'e displays over $R$ and the set $\Ytilde_J(\WW(R))$.
\end{lemma}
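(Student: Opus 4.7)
The plan is to verify that the forward map of Construction~\ref{DieudonneToGroup} is well-defined and then to exhibit an explicit inverse. For the forward direction, starting from a split $\Dscr$-Dieudonn\'e display $(\Scal,\Tcal,\Psi)$, Proposition~\ref{ReformDetCond} applied to $T = \Spec \WW(R)$ (which is an $O_{E_v}$-scheme via the composition $O_{E_v} = W(\kappa) \to W(k) \ltoover{s} \WW(R)$) with $\alpha = \id_M$ shows that the stabilizer $\Ptilde$ of $\Scal$ is a parabolic of $G_{\WW(R)}$ of type $J$. The stabilizer $\Ltilde$ of the decomposition $M = \Scal \oplus \Tcal$ is a Levi subgroup of $\Ptilde$ by the standard correspondence between Levi subgroups of a parabolic and splittings of its unipotent radical. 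Finally, $\gtilde \in G(\WW(R))$ because $\Psi$ is by hypothesis an $O_B \otimes \WW(R)$-linear symplectic similitude of $M = \Lambda \otimes_{\ZZ_p} \WW(R)$, and $G$ is by definition the $\ZZ_p$-group scheme of such similitudes of $\Lambda$.

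For the inverse, given $(\Ptilde,\Ltilde,\gtilde) \in \Ytilde_J(\WW(R))$, let $\Psi\colon M^\sigma \to M$ be the composition of the canonical isomorphism $M^\sigma \iso M$ with the automorphism of $M$ given by $\gtilde$; this is automatically an $O_B$-linear symplectic similitude. To recover $\Scal$, one uses that the stabilizer map defines a $G$-equivariant isomorphism of $O_{E_v}$-schemes from the scheme $\Flag_J$ parametrizing totally isotropic $O_B$-invariant $\Oscr$-module direct summands $\Scal \subset \Lambda$ with $\Lambda/\Scal$ satisfying the determinant condition to the scheme $\Par_J$ of parabolics of type $J$ in $G$. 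Both schemes are smooth and $G$-homogeneous (transitivity on $\Flag_J$ follows from Lemma~\ref{LocalIsom}), so each is a form of $G/P_{J,0}$ for a chosen standard parabolic $P_{J,0}$ of type $J$; the stabilizer map is $G$-equivariant and, by Proposition~\ref{ReformDetCond}, matches stabilizers at a standard base point, hence is an isomorphism. The resulting $\Scal \in \Flag_J(\WW(R))$ corresponding to $\Ptilde$, together with the Levi $\Ltilde$ of $\Ptilde$, determines a unique totally isotropic $O_B$-invariant complement $\Tcal$, since Levi subgroups of $\Ptilde$ are in bijection with splittings of the parabolic filtration $0 \subset \Scal \subset M$ that respect the $\Dscr$-structure.

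That the two constructions are mutually inverse is tautological: the stabilizer of $\Scal$ is $\Ptilde$ by construction, the Levi $\Ltilde$ singles out $\Tcal$ uniquely, and the canonical identification $M^\sigma \iso M$ turns $\Psi$ into $\gtilde$ and vice versa. The main obstacle is the bijection in the middle step, i.e.\ promoting the pointwise content of Proposition~\ref{ReformDetCond} to the statement that the stabilizer map $\Flag_J \to \Par_J$ is an isomorphism of $O_{E_v}$-schemes; this is accomplished via the $G$-homogeneity argument, which reduces the question to identifying stabilizers at a single base point.
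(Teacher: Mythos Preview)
Your proof is correct and follows the paper's approach of constructing an explicit inverse; the paper simply asserts the uniqueness of the $O_B$-invariant totally isotropic summand $\Scal$ with stabilizer $\Ptilde$ and of the complement $\Tcal$ determined by $\Ltilde$, whereas you justify these via the $G$-homogeneity argument identifying $\Flag_J \iso \Par_J$ and the Levi--splitting correspondence. One small imprecision: Lemma~\ref{LocalIsom} is stated for $\Dscr$-modules rather than for flags in them, so your appeal to it for transitivity of $G$ on $\Flag_J$ is slightly loose, but the required statement follows from the same underlying result \cite{RZ}~Theorem~3.16.
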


\begin{proof}
We construct an inverse. Let $(\Ptilde,\Ltilde,\gtilde)$ be in $\Ytilde_J(\WW(R))$. We let $\Scal$ be the unique $O_B$-invariant totally isotropic direct summand of $M$ whose stabilizer is equal to $\Ptilde$. Then $M/\Scal$ satisfies the determinant condition. Let $\Tcal$ be the unique $O_B$-invariant complement of $\Scal$ in $M$ such that the stabilizer of the decomposition $\Scal \oplus \Tcal$ is equal to $\Ltilde$. Further let $\Psi$ be the $\sigma$-linear map attached to $\gtilde$. Then $(\Scal,\Tcal,\Psi)$ is a split $\Dscr$-Dieudonn\'e display. This construction defines an inverse.
\end{proof}

\begin{remark}\label{RecoverF}
Let $(\Scal,\Tcal,\Psi)$ be a split $\Dscr$-Dieudonn\'e display and $(\Ptilde,\Ltilde,\gtilde) \in \Ytilde_J(\WW(R))$ the corresponding triple. Let $(\Pcal,\Qcal,F,F_1)$ be the Dieudonn\'e display associated with $(\Scal,\Tcal,\Psi)$. By definition we have $\Pcal= \Scal \oplus \Tcal = \Lambda \otimes \WW(R)$ and $\Qcal = \Scal \oplus I_R\Tcal$. By~\eqref{Eq:CompF2} and Remark~\ref{PsiDisplay} the $\sigma$-linear map $F\colon \Pcal \to \Pcal$ is given by $\gtilde \circ (p\id_{\Scal} \oplus \id_{\Tcal}) \circ (\id_\Lambda \otimes \sigma)$.
\end{remark}

\subsection{Group-theoretic description of $\Dscr$-zips}\label{GroupDZip}
We recall (a special case of) the definition of the schemes studied in~\cite{MW} and, more generally, in~\cite{PWZ}. Recall that $\overline{G}$ denotes the fiber of $G$ over $\mathbb{F}_p$. Denote by $F$ the Frobenius on $\overline{G}$ given by the $p$-th power. Then $F$ induces an automorphism $\bar\varphi\colon (W,I) \iso (W,I)$ of Coxeter systems. We denote by $\Gamma_\kappa = \Gal(\kgbar/\kappa)$ the Galois group of $\kappa$. It acts on $(W,I)$ via the group generated by $\bar\varphi^{n}$ where $n = [\kappa:\mathbb{F}_p]$.

Let $w_0$ be the element of maximal length in $W$, set $K := {}^{w_0}{\bar\varphi(J)}$ and let $x \in \doubleexp{K}{W}{\bar\varphi(J)}$ be the element of minimal length in $W_Kw_0W_{\bar\varphi(J)}$. Thus
\begin{equation}\label{Shapex}
x = w_{0,K}w_0 = w_0w_{0,\bar\varphi(J)},
\end{equation}
where $w_{0,K}$ and $w_{0,\bar\varphi(J)}$ denote the longest elements in $W_K$ and in $W_{\bar\varphi(J)}$, respectively. Then $x$ is the unique element of maximal length in $\doubleexp{K}{W}{\bar\varphi(J)}$.

\subsubsection*{The $\overline{G}$-scheme $X_J$}
We denote by $X_J = X_{J,F,x}$ the functor on $\kappa$-schemes which is the Zariski-sheafification of the functor $X^{\flat}_J$ which associates with a $\kappa$-scheme $S$ the set of triples $(P,Q,U_QgU_{F(P)})$ where $P \subset G_S$ is a parabolic of type $J$, where $Q \subset G_S$ is a parabolic of type $K$ and where $g \in G(S)$ is an element such that $\relpos(Q,{}^gF(P)) = x$ (here $U_R$ denotes the unipotent radical of a parabolic subgroup $R$ and $\relpos$ the relative position, see~\ref{Weylgroups}). By \cite{MW}~Corollary~4.3 this functor is representable by a $\kappa$-scheme. Note that strictly speaking, in \cite{MW} the functor, where $F$ is the $\kappa$-Frobenius, is considered. However, the representability of the functor considered here follows from the same proof. For any affine scheme $S$ we have $X_J(S) = X^{\flat}_J(S)$ (use $H^1(S,U) = 0$ if $U$ is the unipotent radical of a parabolic group of a reductive group scheme and if $S$ is affine, e.g., see \cite{SGA3}~Exp.~XXVI, Corollaire~2.2).

\begin{remark}\label{PQOpposition}
As $K := {}^{x}\bar\varphi(J)$ and $x$ is the longest element in $\doubleexp{K}{W}{\bar\varphi(J)}$, we have $\relpos(Q,{}^gF(P)) = x$ if and only if $Q \cap {}^gF(P)$ is a common Levi subgroup of $Q$ and ${}^gF(P)$, i.e.~$Q$ and ${}^gF(P)$ are opposite parabolic subgroups (\cite{Lu_ParII} \S8).
\end{remark}

\begin{remark}\label{ZJasTorsor}
The forgetful morphism which is defined on points by $(P,Q,[g]) \mapsto (P,Q)$ is a morphism
$X_J \to \Par_J \times \Par_K$ which is a torsor under the base change to $\Par_J \times \Par_K$ of the Levi quotient of the universal parabolic subgroup over $\Par_J$ (\cite{MW}~Lemma~4.2). This reductive group scheme over $\Par_J \times \Par_K$ is of relative dimension $\dim(P/U_P)$ for any parabolic of $G$ of type $J$ . In particular $X_J$ is a geometrically connected smooth $\kappa$-scheme whose dimension equals $\dim(\Par_J) + \dim(\Par_K) + \dim(P/U_P) = \dim(G)$.
\end{remark}

The $\kappa$-group scheme $G_\kappa = G \otimes_{\ZZ_p} \kappa$ acts on $X_J$ by
\[
h\cdot(P,Q,[g]) = ({}^hP,{}^hQ,[hgF(h)^{-1}]).
\]
The arguments in \cite{MW}~Section~6 show:

\begin{proposition}\label{GroupDZips}
The algebraic stack $\DZip$ and the algebraic quotient stack $[G_\kappa \backslash X_J]$ are isomorphic. In particular, there is a bijection between isomorphism classes of $\Dscr$-zips over $\kgbar$ and $G(\kgbar)$-orbits on $X_J(\kgbar)$.
\end{proposition}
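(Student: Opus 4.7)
My approach is to construct the equivalence $\DZip \simeq [G_\kappa \backslash X_J]$ by finding a common presentation. First, I introduce an auxiliary stack $\DZip^{\#}$ whose $S$-points are $\Dscr$-zips together with a trivialization $\alpha \colon M \iso \Lambda_S$ of the underlying $\Dscr$-module. By Lemma~\ref{LocalIsom}, such trivializations exist étale-locally and form a torsor under $G_\kappa = \Aut_\Dscr(\Lambda_S)$, so the forgetful map $\DZip^{\#} \to \DZip$ is a $G_\kappa$-torsor for the étale topology. Hence $\DZip \simeq [G_\kappa \backslash \DZip^{\#}]$, and it suffices to exhibit a $G_\kappa$-equivariant isomorphism $\DZip^{\#} \simeq X_J$.

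An $S$-point of $\DZip^{\#}$ amounts to a pair of $O_B$-invariant totally isotropic locally direct summands $C, D \subset \Lambda_S$ together with $O_B \otimes \Oscr_S$-linear isomorphisms $\varphi_0 \colon (\Lambda_S/C)^{(p)} \iso D$ and $\varphi_1 \colon C^{(p)} \iso \Lambda_S/D$ compatible with the similitude class of symplectic forms on $\Lambda_S$. By Proposition~\ref{ReformDetCond}, the stabilizer $P$ of $C$ is a parabolic of type $J$, and the correspondence $C \leftrightarrow P$ identifies admissible $C$'s with $\Par_J(S)$. The isomorphism $\varphi_0$ forces $D$ to be locally a Frobenius twist of $\Lambda_S/C$, and combining this with the perfect pairing compatibility coming from the symplectic form shows that the stabilizer $Q$ of $D$ is a parabolic of type $K = \leftexp{w_0}{\bar\varphi(J)}$ (the $w_0$-twist arising precisely from the passage between the submodule $D$ and the quotient $\Lambda_S/C$ via the pairing).

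To extract the group element, I observe that étale-locally we may choose an $O_B \otimes \Oscr_S$-linear splitting $\Lambda_S = C \oplus T$ and a splitting $\Lambda_S = D \oplus U$. Then $\varphi_1^{-1}|_U \oplus \varphi_0|_{T^{(p)}}$ defines an $O_B \otimes \Oscr_S$-linear symplectic similitude $\Lambda_S^{(p)} \iso \Lambda_S$, equivalently an element $g \in G(S)$ via the canonical identification $\Lambda_S^{(p)} = \Lambda_S$. Different choices of splittings modify $g$ by left multiplication by elements of $U_Q(S)$ and right multiplication by elements of $U_{F(P)}(S)$, so the double coset $U_Q g U_{F(P)}$ is canonically determined by the $\Dscr$-zip. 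The fact that $\varphi_0$ and $\varphi_1$ are \emph{isomorphisms} (and not merely maps of the correct rank) translates, using Remark~\ref{PQOpposition}, into the opposition condition $\relpos(Q, \leftexp{g}{F(P)}) = x$. This yields a morphism $\DZip^{\#} \to X_J$ which is manifestly $G_\kappa$-equivariant.

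The inverse construction reverses these steps: given $(P, Q, [g]) \in X_J(S)$, set $M = \Lambda_S$, let $C$ and $D$ be the canonical $O_B$-invariant totally isotropic submodules stabilized by $P$ and $Q$ respectively, and use $g$ together with the fact that $Q$ and $\leftexp{g}{F(P)}$ are opposite to define $\varphi_0$ and $\varphi_1$ as the induced maps on graded pieces. The main obstacle is the bookkeeping in the second paragraph: verifying that the stabilizer of $D$ has type exactly $K$ rather than $\bar\varphi(J)$, and checking that the single relative position condition $\relpos(Q, \leftexp{g}{F(P)}) = x$ simultaneously encodes both $\varphi_0$ and $\varphi_1$ being isomorphisms. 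Once these identifications are pinned down (essentially the content carried out in \cite{MW} Section~6 in the slightly different reductive setting), passing to $G_\kappa$-quotients gives $\DZip \simeq [G_\kappa \backslash X_J]$. The bijection between isomorphism classes over $\bar\kappa$ and $G(\bar\kappa)$-orbits on $X_J(\bar\kappa)$ is then the standard description of the $\bar\kappa$-points of a quotient stack.
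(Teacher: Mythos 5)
Your proposal is correct and reconstructs, with more detail than the paper itself provides (the paper outsources the argument to \cite{MW}~Section~6), essentially that same argument: pass to the $G_\kappa$-torsor of trivialized $\Dscr$-zips, match the flag data $(C,D)$ together with the Frobenius-twisted similitude built from $(\varphi_0,\varphi_1)$ to a point of $X_J$, and descend. One small slip worth fixing: the map $\Lambda_S^{(p)} \iso \Lambda_S$ should be the one induced by $\varphi_1 \oplus \varphi_0$ on $C^{(p)} \oplus T^{(p)} \to U \oplus D$ after identifying $T^{(p)}$ with $(\Lambda_S/C)^{(p)}$ and $U$ with $\Lambda_S/D$ via the chosen splittings, whereas $\varphi_1^{-1}|_U \oplus \varphi_0|_{T^{(p)}}$ has source $U \oplus T^{(p)}$ rather than $\Lambda_S^{(p)}$.
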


Theorem~12.17 of \cite{PWZ} yields a bijection of the set of $G(\kgbar)$-orbits of $X_J(\kgbar)$ with the set $\leftexp{J}{W}$. In particular, there are only finitely many orbits and hence only finitely many isomorphism classes of $\Dscr$-zips over $\kgbar$. The bijection is given by
\begin{equation}\label{DescribeXJOrbit}
\leftexp{J}{W} \ni w \sends \text{$G(\kgbar)$-orbit of $(P_J, \leftexp{\dot w}{P_K}, [\dot w\dot x])$}.
\end{equation}
Here we have chosen a Borel $B$ of $\overline{G}$, a maximal torus $T$ of $B$ (yielding an identification of $W$ with the Weyl group of $T$), and for all $w \in W$ a representative $\dot w$ in $\Norm_G(T)(\kgbar)$. Moreover $P_J$ (resp.~$P_K$) denotes the unique parabolic subgroup $G_{\kgbar}$ of type $J$ (resp.~$K$) containing $B_{\kgbar}$.

\begin{definition}\label{SpecializationOrder}
We endow $\leftexp{J}{W}$ with a relation $\preceq$, where we define $w' \preceq w$ if there exists $y \in W_J$ with $yw'x\bar\varphi(y^{-1})x^{-1}\leq w$ with respect to the Bruhat order. Here $x = w_0w_{0,\bar\varphi(J)}$ is the element defined above.
\end{definition}

\begin{lemma}\label{SpecIsOrder}
The relation $\preceq$ is a partial order on $\leftexp{J}{W}$ compatible with the Galois action by $\Gamma_\kappa$ (i.e., $w' \preceq w \iff \gamma(w') \preceq \gamma(w)$ for all $\gamma \in \Gamma_\kappa$).
\end{lemma}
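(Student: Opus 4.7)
The plan is to verify in turn reflexivity, Galois equivariance, transitivity, and antisymmetry. The first two are straightforward; transitivity and antisymmetry are the substantive content, and I expect transitivity to be the main obstacle, since the Bruhat order is not preserved under arbitrary two-sided multiplication, so one cannot simply compose the two defining inequalities.

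Reflexivity follows by taking $y = 1$: then $y w x \bar\varphi(y^{-1}) x^{-1} = w$, and $w \leq w$ trivially, so $w \preceq w$. For Galois compatibility, observe that $\Gamma_\kappa$ acts on $(W,I)$ through the cyclic subgroup generated by $\bar\varphi^n$, so every $\gamma \in \Gamma_\kappa$ commutes with $\bar\varphi$. Since $J$ is defined over $\kappa$ we have $\gamma(J) = J$, hence $\gamma(W_J) = W_J$ and $\gamma(\bar\varphi(J)) = \bar\varphi(J)$, so $\gamma$ fixes both $w_0$ and $w_{0,\bar\varphi(J)}$, and therefore also $x = w_0 w_{0,\bar\varphi(J)}$. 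A Coxeter system automorphism preserves the Bruhat order, so applying $\gamma$ to an inequality $y w' x \bar\varphi(y^{-1}) x^{-1} \leq w$ witnessing $w' \preceq w$ yields the inequality $\gamma(y)\gamma(w')x\bar\varphi(\gamma(y)^{-1})x^{-1} \leq \gamma(w)$, which witnesses $\gamma(w') \preceq \gamma(w)$ via the transported element $\gamma(y) \in W_J$.

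For transitivity and antisymmetry, I would rewrite the defining condition using the group automorphism $\psi(y) := x \bar\varphi(y) x^{-1}$ of $W$. Since $K = {}^x \bar\varphi(J)$, $\psi$ restricts to a Coxeter system isomorphism $W_J \iso W_K$, and in particular is length-preserving on $W_J$. The condition reads: $w' \preceq w$ iff there exists $y \in W_J$ with $y w' \psi(y^{-1}) \leq w$, i.e., the $W_J$-orbit of $w'$ under the left action $y * v := y v \psi(y^{-1})$ meets the Bruhat interval $[1, w]$. The remaining properties follow from a length analysis exploiting that $\ell(\psi(y)) = \ell(y)$ for $y \in W_J$ and $\ell(yw') = \ell(y) + \ell(w')$ for $w' \in \leftexp{J}{W}$; I would carry this out by following the combinatorial argument of He in \cite{He_GStablePieces}, where this partial order was introduced for precisely this purpose. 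Alternatively --- and this is how the relation will arise naturally later in the paper --- antisymmetry and transitivity follow formally once one identifies $\preceq$ with the specialization order on the finite sober topological space $|[G_\kappa \backslash X_J]|$, via the classification of $G_\kappa$-orbits in Section \ref{GroupDZip} together with the closure relations established in \cite{PWZ}; the specialization order on any sober space is automatically a partial order.
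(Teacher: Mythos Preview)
Your proposal is correct. The Galois compatibility argument is essentially identical to the paper's: both reduce to checking that the generator $\bar\varphi^n$ of the $\Gamma_\kappa$-action fixes $J$ (hence $W_J$) and $x$, and preserves the Bruhat order.

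For the partial order property itself, the paper's proof is a one-line citation to \cite{PWZ}~Corollary~6.3, which establishes transitivity and antisymmetry of $\preceq$ by a direct combinatorial argument. Your sketch correctly isolates the difficulty (transitivity is not formal because Bruhat order is not preserved under two-sided multiplication) and proposes two routes: either He's original combinatorial argument, or the geometric identification of $\preceq$ with the specialization order on the orbit space via \cite{PWZ}. The first route is close in spirit to what the cited result in \cite{PWZ} actually does; the second is mathematically sound but would be logically awkward here, since in the paper's presentation Proposition~\ref{DescribeZJ} (the geometric identification) is stated \emph{after} this lemma and in fact uses it implicitly to phrase the topology. So your first alternative is the cleaner one, and amounts to the same external input the paper invokes.
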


\begin{proof}
It is shown in \cite{PWZ}~Corollary~6.3 that $\preceq$ is a partial order. Let $n=[\kappa:\mathbb{F}_p]$. As the group of automorphisms of $(W,I)$ induced by the Galois action is generated by $\bar\varphi^n$, it remains to show that $w' \preceq w$ implies $\bar\varphi^n(w') \preceq \bar\varphi^n(w)$. But this follows from $\bar\varphi^n(J) = J$ which implies $\bar\varphi^n(W_J) = W_J$ and $\bar\varphi^n(x)=x$.
\end{proof}

\begin{proposition}\label{DescribeZJ}
The underlying topological space of $[G_\kappa \backslash X_J] \otimes_{\kappa} \kgbar$ is homeo\-morphic to the topological space $(\leftexp{J}{W})_{\rm top}$ attached to the partially ordered set \mbox{$(\leftexp{J}{W}, \preceq)$}.
\end{proposition}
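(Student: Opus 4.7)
The plan is to identify the topological space on both sides and then verify that the set-theoretic bijection induced by \eqref{DescribeXJOrbit} is a homeomorphism. By the definition of a quotient stack, the underlying topological space of $[G_\kappa \backslash X_J] \otimes_{\kappa} \kgbar$ has as its points the set of $G(\kgbar)$-orbits on $X_J(\kgbar)$, endowed with the quotient topology from $X_J \otimes \kgbar$: a subset $T$ is closed if and only if the union of the corresponding orbits is closed in $X_J \otimes \kgbar$. By \eqref{DescribeXJOrbit}, this set of orbits is in bijection with $\leftexp{J}{W}$.

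It therefore suffices to verify that, under \eqref{DescribeXJOrbit}, the Zariski closure in $X_J \otimes \kgbar$ of the orbit $\Ocal_w$ corresponding to $w \in \leftexp{J}{W}$ is the union $\bigcup_{w' \preceq w} \Ocal_{w'}$. This is precisely the input for which He's partial order $\preceq$ of Definition~\ref{SpecializationOrder} was designed: it was introduced in~\cite{He_GStablePieces} to encode exactly the closure relations among such $G$-orbits, and this was established in the general setting of $G$-zips in~\cite{PWZ} (specifically in the context of their Theorem~1.4 / \S12 results, from which Proposition~\ref{GroupDZips} is extracted). Granting this closure-relation statement, a subset $T \subseteq \leftexp{J}{W}$ is closed in the quotient topology if and only if $T$ is downward closed under $\preceq$, which is exactly the topology on $(\leftexp{J}{W})_{\rm top}$; the bijection becomes a homeomorphism.

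The main step is thus a direct invocation of~\cite{PWZ}; the rest is a formal identification of quotient topologies and carries no independent difficulty. The only potential obstacle is bookkeeping: one must reconcile the role of the Frobenius endomorphism $F$ on $\overline{G}$, the induced $\bar\varphi$ on $(W,I)$, and the element $x = w_0 w_{0,\bar\varphi(J)}$ with the conventions of~\cite{PWZ}, as well as check that $\preceq$ is indeed a partial order. The latter is recorded in Lemma~\ref{SpecIsOrder}, and the former compatibility has already been set up in Section~\ref{GroupDZip}, so no genuine new work is required beyond citation.
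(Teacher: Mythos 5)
Your proposal is correct and follows essentially the same route as the paper: reduce to the standard description of the underlying topological space of a quotient stack with finitely many orbits (recorded in Section~\ref{QuotStacks} of the appendix, citing~\cite{We2}), then show the closure relation among the $G(\kgbar)$-orbits in $X_J(\kgbar)$ matches $\preceq$ by invoking the main result of~\cite{PWZ} on this point — the paper cites Theorem~12.15 there specifically, which is the precise statement you gesture at. Note that what you call "by definition" about the quotient-stack topology is actually content (it requires a presentation-independent identification and the finiteness of the orbit set); the paper handles this by the reference in the appendix, so to make your argument fully self-contained you would want to cite that or \cite{We2}~(4.2)--(4.4) rather than treat it as definitional.
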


The description of the underlying topological space of a quotient stack for an action with finitely many orbits and the notion of the topological space attached to a partially ordered set are recalled in Section~\ref{QuotStacks}.

\begin{proof}
For $w,w' \in \leftexp{J}{W}$ let $O$ and $O'$ be the $G(\kgbar)$-orbits of $X_J(\kgbar)$ corresponding to $w$ and $w'$, respectively. As explained in Section~\ref{QuotStacks}, it suffices to show that the closure of $O'$ contains $O$ if and only if $w \preceq w'$. This is shown in \cite{PWZ}~Theorem~12.15.
\end{proof}

In particular we obtain for every algebraically closed extension $k$ of $\kappa$ a bijection
\begin{equation}\label{IsoClassesDZip}
\{\text{isomorphism classes of $\Dscr$-zips over $k$}\} \bijective \leftexp{J}{W}.
\end{equation}

\begin{remark}\label{DescribeZJRational}
Proposition~\ref{DescribeZJ} shows that the underlying topological space of $[G_\kappa  \backslash X_J]$ is homeomorphic to the set of $\Gamma_{\kappa}$-orbits on $\leftexp{J}{W}$ endowed with the quotient topology (Section~\ref{QuotStacks}). In other words, it is homeomorphic to the topological space attached to the partially ordered set $\Gamma_{\kappa}\backslash \leftexp{J}{W}$ of $\Gamma_{\kappa}$-orbits on $\leftexp{J}{W}$, where we set $\Gamma_{\kappa}w' \preceq \Gamma_{\kappa}w$ if and only if for one (or, equivalently by Lemma~\ref{SpecIsOrder}, for all) $v' \in \Gamma_{\kappa}w'$ there exists $v \in \Gamma_{\kappa}w$ such that $v' \preceq v$.
\end{remark}

\begin{remark}\label{RelPosPQ}
Let $k$ be an algebraically closed extension of $\kappa$, $w \in \leftexp{J}{W}$, and let $O_w \subset X_J(k)$ be the corresponding $G(k)$-orbit. Let $\bar w$ be the image of $w$ under the canonical map $\leftexp{J}{W} = W_J \backslash W \epi W_J \backslash W/W_K = \doubleexp JWK$. Then for all $(P,Q,[g]) \in O_w$ one has $\relpos(P,Q) = \bar w$ (\cite{MW}~Section~4.6).
\end{remark}

\subsubsection*{The smooth cover $\Xtilde_J$ of $X_J$}
We denote by $\Xtilde_J$ the scheme which represents the functor on $\kappa$-schemes which associates with a $\kappa$-scheme $S$ the set of triples $(P,Q,g)$ where $P \subset G_S$ is a parabolic of type $J$, where $Q \subset G_S$ is a parabolic of type $K$ and where $g \in G(S)$ is an element such that $\relpos(Q,{}^gF(P)) = x$.

The group $G_\kappa$ acts on $\Xtilde_J$ by the rule
\[
h\cdot(P,Q,g) = ({}^hP,{}^hQ,hgF(h)^{-1}).
\]
The canonical surjective morphism $\Xtilde_J \to X_J$ is then $G_\kappa$-equivariant.


\subsection{Group-theoretic definition of $\zeta$ and the Ekedahl-Oort stratification}

We relate the reductions of Shimura varieties and the schemes $\Ytilde_J$ (Definition~\ref{DefYtilde}), $\Xtilde_J$, and $X_J$. We first construct a morphism
\begin{equation}\label{YtildeZtilde}
\psi\colon \Ytilde_J \otimes \kappa \to \Xtilde_J
\end{equation}
as follows. For every $S$-valued point $(P,L,g)$ of $\Ytilde_J$ we define $Q$ as the unique opposite parabolic such that ${}^{g^{-1}}Q \cap F(P) = F(L)$ (\cite{SGA3}~Exp.~XXVI,~4.3.). Then $Q$ is of type $K = \leftexp{w_0}{\bar\varphi(J)}$, and $(P,Q,g) \in \Xtilde_J(S)$ by Remark~\ref{PQOpposition}.

Now define a morphism
\[
\tilde\theta\colon \Ascrtilde_0 \to \Ytilde_J \otimes \kappa
\]
as follows: Let $R$ be a $\kappa$-algebra and let $(A,\iota,\lambda,\eta,\alpha,C',D')$ be an $R$-valued point of $\Ascrtilde_0$. Let $(M,C,D,\varphi_0,\varphi_1)$ be the $\Dscr$-zip attached to $(A,\iota,\lambda)$ (Section~\ref{DZipToAV}). Then $\alpha$ is by definition an isomorphism of $\Dscr$-modules $M \iso \Lambda \otimes R$. Let $P$ be the stabilizer of $\alpha(C)$ in $G_R$, let $L$ be the stabilizer of the decomposition $\alpha(C) \oplus C' = \Lambda_R$, and let $g$ be the composition
\[
\Lambda_R \iso \Lambda_R^{(p)} = \alpha(C)^{(p)} \oplus
C^{\prime(p)} \vartoover{30}{\varphi'_1 \oplus \varphi'_0} \alpha(D)
\oplus D' = \Lambda_R
\]
where $\varphi'_1$ and $\varphi'_0$ are the morphisms induced by $\varphi_1$ and $\varphi_0$ via the isomorphism $\alpha$. We obtain an $R$-valued point $(P,L,g)$ of $\Ytilde$. This construction is functorial in $R$ and defines the morphism $\tilde\theta$.

The composition $\psi \circ \tilde\theta$ is a morphism $\zgtilde\colon \Ascrtilde_0 \to \Xtilde_J$. It induces morphisms
\begin{equation}\label{DefineZeta}
\begin{aligned}
\zeta^{\#}&\colon \Ascr^{\#}_0 \to X_J,\\
\zeta&\colon \Ascr_0 \to [G_{\kappa} \backslash X_J].
\end{aligned}
\end{equation}
Via the isomorphism $\DZip \cong [G \backslash X_J]$ (Proposition~\ref{GroupDZips}) the morphism $\zeta$ defined here is identified with the morphism $\zeta$ in~\eqref{DefZeta}.

By Proposition~\ref{DescribeZJ} we know that the points of the underlying topological space of $[G_{\kappa}\backslash X_J] \otimes \kgbar$ are in natural bijection with $\leftexp{J}{W}$. For $w \in \leftexp{J}{W}$ set
\[
\Ascr^w_0 := \zeta^{-1}(\{w\}).
\]
This is a locally closed subset of the underlying topological space of $\Ascr_0 \otimes \kgbar$.

\begin{definition}\label{DefEO}
We call the corresponding reduced locally closed substack of \mbox{$\Ascr_0 \otimes \kgbar$} the \emph{Ekedahl-Oort stratum} corresponding to $w \in \leftexp{J}{W}$. It is again denoted by $\Ascr^w_0$.
\end{definition}

\begin{remark}\label{SchemeEO}
The residue gerbe (\cite{LM}~(11.1)) $\Gscr(w)$ of the point $w$ in the underlying topological space of $[G_{\kappa}\backslash X_J] \otimes \kgbar$ is a locally closed smooth algebraic substack. Then the locally closed substack $\Ascr_0(w) := \zeta^{-1}(\Gscr(w))$ of $\Ascr_0$ has the same underlying topological space as $\Ascr^w_0$. It can be shown that $\Ascr_0(w) = \Ascr^w_0$ (at least for $p > 2$). As we do not need this in the sequel, we decided to define the structure of a substack just by taking the reduced substack.

Let $\Spec \kgbar \to [G_{\kappa}\backslash X_J]$ be a representative of the point $w$ and let $\Mline$ be the corresponding $\Dscr$-zip over $\kgbar$. Then $\Ascr_0(w) = \Ascr_0^{[\Mline]}$ with the notation in Definition~\ref{FirstDefEO}. Let $\APlus$ be the universal family restricted to $\Ascr_0(w)$ and let $\underline{\Mscr}$ be the attached $\Dscr$-zip on $\Ascr_0(w)$. Then by the definition of the residue gerbe, a morphism of $\kgbar$-schemes $f\colon T \to \Ascr_0 \otimes \kgbar$ factors through $\Ascr_0(w)$ if and only if $f^*\underline{\Mscr}$ is fppf-locally on $T$ isomorphic to the pullback $\Mline_T$ of $\Mline$ to $T$.
\end{remark}

\begin{remark}\label{EOFieldOfDef}
Instead of working with points in $[G_{\kappa} \backslash X_J] \otimes \kgbar$ to define Ekedahl-Oort strata one could also work with points in $[G_{\kappa} \backslash X_J]$ (considered as locally closed substacks of $[G_{\kappa} \backslash X_J]$ defined over $\kappa$) to define $\kappa$-rational Ekedahl-Oort strata indexed by $\Gamma_{\kappa}$-orbits $\Gamma_{\kappa}w$ on $\leftexp{J}{W}$ (Remark~\ref{DescribeZJRational}). We denote them by $\Ascr^{\Gamma_{\kappa}w}_0$. Then
\[
\Ascr^{\Gamma_{\kappa}w}_0 \otimes \kgbar = \bigcup_{w' \in \Gamma_{\kappa}w} \Ascr^{w'}_0.
\]
Conversely, $\Ascr^w_0$ for $w \in \leftexp{J}{W}$ is already defined over the finite extension $\kappa(w)$ of $\kappa$ in $\kgbar$ such that $\Gal(\kgbar/\kappa(w)) = \set{\gamma \in \Gamma_\kappa}{\gamma(w) = w}$.
\end{remark}

\begin{example}\label{MuOrdSS}
The partially ordered set $(\leftexp{J}{W},\preceq)$ has a unique minimal element, namely $1$, and a unique maximal element, $\leftexp{\mu}{w} := w_{0,J}w_0$, where $w_0$ is the longest element in $W$ and $w_{0,J}$ is the longest element in $W_J$. We call the corresponding Ekedahl-Oort strata $\Ascr_0^1$ and $\Ascr_0^{\leftexp{\mu}{w}}$ the \emph{superspecial} and the \emph{$\mu$-ordinary Ekedahl-Oort stratum}, respectively. As the $\Gamma_\kappa$-action preserves the order, it fixes $1$ and $\leftexp{\mu}{w}$. Therefore both strata are defined over $\kappa$.
\end{example}


\subsection{Ekedahl-Oort strata in the Siegel case II}\label{EOSiegel2}
We continue the example of the Siegel case from Sections~\ref{SiegelModSpace} and~\ref{EOSiegel1}. As explained in~\ref{SymplecticExample}, $\leftexp{J}{W}$ can be identified with $\{0,1\}^g$. Moreover, the Frobenius acts trivially on $W$, i.e. $\bar\varphi = \id$, and $K = J$ because $w_0 = -1$ is central. As the Galois group of $\kappa = \FF_p$ acts trivially on $(W,I)$, all Ekedahl-Oort strata are already defined over $\FF_p$ (Remark~\ref{EOFieldOfDef}).

By Section~\ref{EOSiegel1} and~\eqref{IsoClassesDZip} we have for every algebraically closed extension $k$ of $\FF_p$ a bijection
\begin{equation}\label{ClassSympDieud}
\{\text{isomorphism classes of symplectic Dieudonn\'e spaces over $k$}\} \bijective \{0,1\}^g.
\end{equation}
For $w = (\epsilon_i)_i \in \{0,1\}^g$ let $\Mline = (M,F,V,\lrangle)$ be a symplectic Dieudonn\'e space over $k$ in the isomorphism class corresponding to $w$. Combining~\eqref{InterpretePos} and Remark~\ref{RelPosPQ} we obtain
\begin{equation}\label{RelPosFMVM}
a(\Mline) := \dim M/(FM + VM) = g - \#\set{i}{\epsilon_i = 1}.
\end{equation}
Therefore the longest element $\leftexp{\mu}{w} = (1,1,\dots,1)$ in $\leftexp{J}{W}$ corresponds to the unique isomorphism class of $\Mline$ with $a(\Mline) = 0$, which means that the associated truncated $p$-divisible group is ordinary. Thus in the Siegel case the $\mu$-ordinary Ekedahl-Oort stratum is equal to the stratum of ordinary principally polarized abelian varieties in $\Ascr_0$.

The shortest element $1 = (0,\dots,0)$ in $\leftexp{J}{W}$ corresponds to the isomorphism class of $\Mline$ with $a(\Mline) = g$, which means that the associated truncated $p$-divisible group is superspecial. Thus in the Siegel case the superspecial Ekedahl-Oort stratum is equal to the stratum of principally polarized abelian varieties in $\Ascr_0$ such that the underlying abelian variety of a geometric point is isomorphic to a product of supersingular elliptic curves.


\section{Flatness of $\zeta$}\label{secflat}

\begin{theorem}\label{zetaflat}
The morphism $\zeta$ is flat.
\end{theorem}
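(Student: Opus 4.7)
The morphism $\zeta$ fits into a commutative square with $\zeta^\#\colon \Ascr_0^\# \to X_J$ lying over $\Ascr_0 \to [G_\kappa\backslash X_J]$, and whose vertical arrows are smooth and surjective (torsors under $\Gbar$ and $G_\kappa$ respectively). Hence flatness of $\zeta$ is equivalent to flatness of the more concrete $\zeta^\#$. Both source and target of $\zeta^\#$ are smooth over $\kappa$ (the target by Remark~\ref{ZJasTorsor}), and for such a morphism between smooth $\kappa$-schemes (or DM stacks) Hironaka's miracle flatness criterion reduces flatness to equidimensionality of all fibers of the expected dimension $\dim \Ascr_0^\# - \dim X_J$, which will in turn follow from universal openness. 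The plan is therefore to apply a valuative criterion for universal openness (Proposition~\ref{opencrit}), and to verify its hypothesis by means of the Dieudonn\'e display theory of Section~\ref{DieudonneDisplay}.

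\textbf{Lifting via Dieudonn\'e displays.} Let $V$ be a discrete valuation ring with residue field $k$ and fraction field $K$; let $\bar x \in \Ascr_0^\#(k)$ and $\iota\colon \Spec V \to X_J$ be given with $\iota\otimes k = \zeta^\#(\bar x)$. The valuative criterion requires, possibly after a finite extension $V\subset V'$, a lift $x \in \Ascr_0^\#(V')$ specializing to $\bar x$ and mapping to $\iota\otimes V'$ under $\zeta^\#$. Replacing $V$ by its completion and passing to a finite extension if needed, I may assume $V$ is a complete DVR with perfect residue field; since $V$ is a $\kappa$-algebra, the framework of Section~\ref{DieudonneDisplay} applies. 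Using smoothness of $\Xtilde_J \to X_J$ (up to an \'etale extension of $V$), then of $\psi\colon \Ytilde_J \otimes \kappa \to \Xtilde_J$ from~\eqref{YtildeZtilde}, and finally Lemma~\ref{ZinkHensel} applied to the smooth scheme $\Ytilde_J$, I lift $\iota$ all the way to a $\WW(V)$-valued point of $\Ytilde_J$. By Lemma~\ref{DescribeSplit} this is a split $\Dscr$-Dieudonn\'e display over $V$ (Definition~\ref{DefDDisplay}) extending the one attached to $\bar x$. Theorem~\ref{DieudonneEquiv} then yields a $p$-divisible group with $\Dscr$-structure over $V$ deforming $A_0[p^\infty]$, and by Serre--Tate theory one lifts $A_0$ to an abelian scheme $A/V$ whose $p$-divisible group with $\Dscr$-structure recovers this lift; the \'etale prime-to-$p$ polarization class $\lambda$ and level structure $\eta$ extend uniquely, and Lemma~\ref{LocalIsom} produces a trivialization $\alpha$ extending $\alpha_0$. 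This assembles into the desired $V$-point $x$ of $\Ascr_0^\#$.

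\textbf{Main obstacle.} The crucial remaining step is checking that $\zeta^\#(x) = \iota\otimes V'$: the $\Dscr$-zip of $A/V$ built from the Hodge and conjugate filtrations on $H_1^{\DR}(A/V)$ as in Section~\ref{DZipToAV} must coincide with the $V$-point of $X_J$ we started from. This follows by tracing the chain split $\Dscr$-Dieudonn\'e display $\rightsquigarrow$ Dieudonn\'e display of $A[p^\infty]$ $\rightsquigarrow$ Hodge filtration via~\eqref{HodgeFil} $\rightsquigarrow$ $\Dscr$-zip, and matching the output with the original $\Ytilde_J(\WW(V))$-datum through Construction~\ref{DieudonneToGroup}, Remark~\ref{RecoverF}, and the morphism $\psi$ of~\eqref{YtildeZtilde}; concretely, the parabolic $\Ptilde$, Levi $\Ltilde$ and element $\gtilde$ attached to the split display have to be identified with those produced by the direct construction $\Ascrtilde_0 \to \Ytilde_J\otimes\kappa$. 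This compatibility verification, together with the bookkeeping across the various covers $\Ascr_0^\#$, $\Ascrtilde_0$, $\Xtilde_J$, $\Ytilde_J$, is the main technical difficulty. Once universal openness of $\zeta^\#$ is in hand, miracle flatness immediately converts it to the flatness of $\zeta$.
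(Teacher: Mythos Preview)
Your overall strategy matches the paper's: reduce to $\zeta^\#$, establish universal openness via the valuative criterion of Proposition~\ref{opencrit}, verify that criterion using Dieudonn\'e display theory and Lemma~\ref{ZinkHensel}, then deduce flatness from universal openness together with regularity of source and target (the paper cites \cite{EGA}~IV~(15.4.2) directly rather than invoking miracle flatness, but the content is the same).

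There is, however, a genuine gap in your lifting argument. The morphism $\psi\colon \Ytilde_J\otimes\kappa \to \Xtilde_J$ is \emph{not} smooth. Unwind its definition: for fixed $(P,g)$, it sends a Levi $L$ of $P$ to the parabolic $Q$ opposite to ${}^gF(P)$ with common Levi ${}^gF(L)$; this factors through the Frobenius $L\mapsto F(L)$ on the scheme of Levis of $P$, which is purely inseparable of degree $p^{\dim U_P}$ --- bijective on geometric points but with vanishing differential. So you cannot lift an $R$-point of $\Xtilde_J$ to an $R$-point of $\Ytilde_J\otimes\kappa$ by formal smoothness.

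This is precisely why the paper passes to the purely inseparable extension $R_1 = k\dlbrack\eps^{1/p}\drbrack$ of $R = k\dlbrack\eps\drbrack$: the target Levi ${}^{g_\eps^{-1}}Q_\eps \cap F(P_\eps)$ of $F(P_\eps)$ is defined over $R = R_1^p$, so over $R_1$ one can extract $p$-th roots and produce a Levi $L_{\eps,1}$ of $P_{\eps,1}$ with $F(L_{\eps,1})$ equal to the prescribed one. Only after this step is Lemma~\ref{ZinkHensel} invoked to lift $(P_{\eps,1},L_{\eps,1},g_{\eps,1})$ to $\Ytilde_J(\WW(R_1))$. Your finite extension $V\subset V'$ is introduced only to make the residue field perfect; the need for a $p$-th-root extension at this later Frobenius step is the point you missed. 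What you flag as the ``main obstacle'' --- the compatibility $\zeta^\#(x) = \iota\otimes R_1$ --- is by comparison routine bookkeeping once $L_{\eps,1}$ has been constructed correctly.
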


By definition the following diagram is cartesian
\[\xymatrix{
\Ascr^{\#}_0 \ar[r] \ar[d]_{\zeta^{\#}} & \Ascr_0 \ar[d]_{\zeta} \\
X_J \ar[r] & [G \backslash X_J],
}\]
where the horizontal morphisms are $G$-torsors and in particular smooth and surjective. We will show that $\zeta^{\#}$ is universally open. As $X_J$ and $\Ascr^{\#}_0$ are both regular, it then follows from \cite{EGA}~IV,~(15.4.2), that $\zeta^{\#}$ is flat. By faithfully flat descent this shows that $\zeta$ is flat.

To show that $\zeta^{\#}$ is universally open, we use the following criterion.

\begin{proposition}\label{opencrit}
Let $Y$ be a locally noetherian scheme, let $X$ be a scheme, and let $f\colon X \to Y$ be a morphism of finite type. Assume that for every commutative diagram
\begin{equation}\label{valdiagram}
\begin{gathered}\xymatrix{
\Spec(k) \ar[r]^h \ar[d] & X \ar[d]^f \\
\Spec(R) \ar[r]^g & Y
}\end{gathered}
\end{equation}
where $R$ is a complete discrete valuation ring with algebraically closed residue field $k$, there exists a surjective morphism $\Spec(\Rtilde) \to \Spec(R)$, where $\Rtilde$ is a local integral domain and a morphism $\gtilde\colon \Spec(\Rtilde) \to X$ such that the following diagram commutes
\[\xymatrix{
\Spec(\Rtilde \otimes_R k) \ar[r] \ar[d] & \Spec(k) \ar[r]^h & X \ar[d]^f\\
\Spec(\Rtilde) \ar[rru]^{\gtilde} \ar[r] & \Spec(R) \ar[r]^g & Y
}\]
Then $f$ is universally open.
\end{proposition}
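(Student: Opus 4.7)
The plan is to reduce universal openness of $f$ to the \emph{going-down property} via a standard criterion (see e.g.~\cite{EGA}~IV, \S14): a morphism of finite type between locally noetherian schemes is universally open if and only if for every $x \in X$, $y = f(x)$, and every generization $y' \leadsto y$ in~$Y$ there exists a generization $x' \leadsto x$ in~$X$ with $f(x') = y'$. Thus it suffices to deduce this going-down property from the valuative hypothesis.

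So fix $x$, $y = f(x)$, and a generization $y' \leadsto y$. The first task is to build a commutative diagram of the shape~\eqref{valdiagram}. The prime $\pfr \subset \Oscr_{Y,y}$ corresponding to $y'$ yields a local noetherian domain $\Oscr_{Y,y}/\pfr$, which is dominated by some discrete valuation ring $R_0$ inside $\Frac(\Oscr_{Y,y}/\pfr) = \kappa(y')$, so that $\Spec R_0 \to Y$ sends the generic and closed points to $y'$ and $y$ respectively. I then pass to the completion of $R_0$ and perform a flat extension of complete DVRs realizing a prescribed residue field enlargement (possible via Cohen structure theory for complete noetherian local rings) to obtain a complete DVR $R$ whose residue field $k$ is algebraically closed and of transcendence degree over $\kappa(y)$ large enough that the finitely generated extension $\kappa(y) \subseteq \kappa(x)$ (finitely generated because $f$ is of finite type) admits a $\kappa(y)$-embedding into~$k$. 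Fixing one such embedding supplies the morphism $h\colon \Spec k \to X$ landing at $x$ and compatible with the resulting $g\colon \Spec R \to Y$, so that~\eqref{valdiagram} commutes.

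Applying the hypothesis then produces a local integral domain $\tilde R$, a surjective morphism $\Spec \tilde R \to \Spec R$, and $\gtilde\colon \Spec \tilde R \to X$ restricting on the closed fibre to the composition $\Spec(\tilde R \otimes_R k) \to \Spec k \xrightarrow{h} X$. Let $\phi\colon R \to \tilde R$ denote the underlying ring map and $\tilde\eta$, $\tilde s$ the generic and closed points of $\Spec \tilde R$. Since $\Spec R$ has only two points while $\tilde R$ is a local domain, surjectivity of $\Spec \tilde R \to \Spec R$ forces $\phi$ to be an injective local homomorphism: the alternatives $\ker\phi = \mfr_R$ or $\phi^{-1}(\mfr_{\tilde R}) = (0)$ each cause the image of $\Spec\tilde R$ to consist of a single point of $\Spec R$. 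Hence $\tilde\eta \mapsto \eta_R$ and $\tilde s \mapsto s_R$, and compatibility on the closed fibre gives $\gtilde(\tilde s) = x$. Setting $x' := \gtilde(\tilde\eta)$ therefore yields a generization of $x$ in $X$ with $f(x') = g(\eta_R) = y'$, which is exactly the going-down property.

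The hard part will be the first step: producing a complete DVR $R$ whose residue field is algebraically closed and large enough to accommodate an embedding of $\kappa(x)$ over $\kappa(y)$. This is purely technical commutative algebra combining the existence of DVRs dominating local noetherian domains with the existence of flat extensions of complete DVRs realizing arbitrary residue field enlargements. Once~\eqref{valdiagram} is in place, the hypothesis plus the small prime-ideal calculation in $\tilde R$ sketched above deliver the required generization $x'$ essentially for free.
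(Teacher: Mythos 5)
Your reduction to the going-down (generizing) property is where the argument goes wrong. For a morphism of finite type to a locally noetherian scheme, the statement ``for every $x\in X$ and every generization $y'\leadsto f(x)$ there is a generization $x'\leadsto x$ with $f(x')=y'$'' is equivalent to $f$ being \emph{open}, not to $f$ being \emph{universally} open. (The correct Chevalley criterion, EGA~IV~14.4.1, has an additional fiber-dimension inequality $\dim_{x'}(X_{y'})\geq\dim_{x}(X_{y})$ that you have dropped; without it the property does not survive base change.) So the argument you give, which constructs the DVR diagram and then reads off a generization of $x$ mapping to $y'$, only establishes that $f$ itself is open.

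The paper's proof inserts exactly the missing step: it first invokes \cite{EGA}~IV~(8.10.2), by which $f$ is universally open if and only if each base change $f_{\AA^n_Y}\colon \AA^n_X\to\AA^n_Y$ is open, observes that the valuative hypothesis of the proposition is inherited by $f_{\AA^n_Y}$, and only then runs the Chevalley-constructibility-plus-stability-under-generization argument to get openness of each $f_{\AA^n_Y}$. Your construction of the diagram~\eqref{valdiagram} (via a DVR dominating $\Ocal_{Y,y}/\pfr$, completion, and a Cohen-type residue-field enlargement to fit an embedding of $\kappa(x)$), and your extraction of the generization $x'$ from the hypothesis via the local-injectivity of $R\to\tilde R$, are correct and essentially identical to the paper's Lemma~\ref{existval} and the concluding paragraph of its proof. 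To repair the proposal you need only prepend the $\AA^n$-base-change reduction and note that the hypothesis is stable under it; alternatively, you would have to verify the fiber-dimension inequality in Chevalley's criterion, which does not follow directly from the data produced by the hypothesis.
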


It is easy to see that such a property already characterizes universally open morphisms. We will not need this remark in the sequel.

\begin{proof}
By \cite{EGA}~IV~(8.10.2) it suffices to show that the base change $f_{\AA^n_Y} \colon \AA^n_X \to \AA^n_Y$ is open for all $n \geq 0$. The affine space $\AA^n_Y$ is again locally noetherian and the hypothesis for $f$ implies the same property for $f_{\AA^n_Y}$. Thus we may replace $Y$ by $\AA^n_Y$ and $X$ by $\AA^n_X$. Therefore it suffices to show that $f$ is open. The question is local on $Y$ and we may thus assume that $Y$ is noetherian.

Let $U \subset X$ be an open subset. By Chevalley's theorem $f(U)$ is constructible. Therefore it suffices to show that $f(U)$ is stable under generization (e.g., \cite{GW}~Lemma~10.17). Let $x_0 \in U$ and $y_0 = f(x_0) \in f(U)$ and let $y_1 \in Y$ be a generization with $y_1 \not= y_0$. By Lemma~\ref{existval} below, there exists a diagram like in~\eqref{valdiagram} such that $g(s) = y_0$, $g(\eta) = y_1$ (where $s$ (resp.~$\eta$) is the special (resp.~generic) point of $\Spec R$) and such that the image of $h$ is $x_0$. We apply the hypothesis and find a morphism $\gtilde\colon \Spec(\Rtilde) \to X$ such that $f \circ \gtilde$ is the composition
\[
\Spec(\Rtilde) \to \Spec(R) \to Y.
\]
The image $x_1$ of the generic point of $\Spec(\Rtilde)$ under $\gtilde$ is a
generization of $x_0$ and hence lies in $U$ as $U$ is open. Therefore $y_1
= f(x_1) \in f(U)$. 
\end{proof}

\begin{lemma}\label{existval}
Let $Y$ be a locally noetherian scheme and let $f\colon X \to Y$ be a morphism of schemes. Let $x_0 \in X$, $y_0 := f(x_0)$ and let $y_1 \not= y_0$ be a generization of $y_0$. Then there exists a commutative diagram
\[\xymatrix{
\Spec(\kappa) \ar[r]^{h} \ar[d]_i & X \ar[d]^f \\
\Spec(R) \ar[r]^g & Y \\
}\]
where $R$ is a complete discrete valuation ring with algebraically closed residue field $\kappa$, and where $i\colon \Spec(\kappa) \mono \Spec(R)$ is the inclusion such that the image of the generic (resp.\ special) point of $\Spec(R)$ under $g$ is $y_1$ (resp.\ $y_0$) and such that the image of $h$ is $x_0$.
\end{lemma}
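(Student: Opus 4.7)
The plan is to construct $R$ in two stages: first realize the specialization $y_1 \rightsquigarrow y_0$ by a DVR $R_1$ mapping to $Y$; then enlarge $R_1$ to a complete DVR with a sufficiently large algebraically closed residue field so that the lift $h$ exists.

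For the first stage, let $\pfr \subset \Ocal_{Y, y_0}$ be the prime ideal corresponding to the generization $y_1$, and set $A := \Ocal_{Y, y_0}/\pfr$. Since $Y$ is locally noetherian, $A$ is a noetherian local domain with residue field $k(y_0)$ and fraction field $k(y_1)$, and its maximal ideal is nonzero because $y_1 \ne y_0$. By a classical result of commutative algebra (Matsumura, \emph{Commutative Ring Theory}, Theorem~11.4), there exists a DVR $R_1 \subseteq k(y_1)$ dominating $A$. The composition $\Spec R_1 \to \Spec A \hookrightarrow Y$ then sends the generic point to $y_1$ and the closed point to $y_0$.

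For the second stage, denote by $k_1$ the residue field of $R_1$, which is a field extension of $k(y_0)$. I choose an algebraically closed field $\kappa$ together with $k(y_0)$-embeddings $k_1 \hookrightarrow \kappa$ and $k(x_0) \hookrightarrow \kappa$ (take an algebraic closure of a residue field of $k_1 \otimes_{k(y_0)} k(x_0)$). Using Cohen's structure theorem one can then produce a local injection $\hat R_1 \hookrightarrow R$ into a complete DVR $R$ of residue field $\kappa$ inducing the chosen embedding $k_1 \hookrightarrow \kappa$: in the equicharacteristic case $\hat R_1 \cong k_1[[\pi]]$ for a uniformizer $\pi$, and one sets $R := \kappa[[\pi]]$; in the mixed characteristic case $\hat R_1$ is a finite totally ramified extension of a Cohen ring of $k_1$ cut out by an Eisenstein polynomial, which remains Eisenstein after base change along a compatible local inclusion into $W(\kappa)$, yielding the desired complete DVR $R$.

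Finally, set $g\colon \Spec R \to \Spec R_1 \to Y$. By construction the images of the two points of $\Spec R$ under $g$ are $y_1$ and $y_0$. The composition $g \circ i\colon \Spec \kappa \to Y$ is given on function fields by $k(y_0) \hookrightarrow k_1 \hookrightarrow \kappa$, which by choice of $\kappa$ agrees with $k(y_0) \hookrightarrow k(x_0) \hookrightarrow \kappa$; hence the embedding $k(x_0) \hookrightarrow \kappa$ defines a morphism $h\colon \Spec \kappa \to \Spec k(x_0) \hookrightarrow X$ with image $x_0$ and satisfying $f \circ h = g \circ i$. The main technical step is the first stage, the classical existence of a DVR dominating an arbitrary noetherian local domain of positive dimension; the remainder is routine bookkeeping around Cohen's structure theorem.
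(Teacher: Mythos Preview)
Your proof is correct and follows essentially the same two-stage strategy as the paper: first produce a DVR realizing the specialization $y_1 \rightsquigarrow y_0$, then enlarge it to a complete DVR whose algebraically closed residue field admits $k(y_0)$-embeddings of both the DVR's residue field and $k(x_0)$. The paper simply asserts the existence of the first DVR and then cites \cite{EGA}~${\bf 0}_I$~6.8.3 for the second step (the existence of a flat local map $R' \to R$ into a complete DVR with prescribed residue field and $\mfr' R$ maximal), whereas you spell out the first stage via $\Ocal_{Y,y_0}/\pfr$ and Matsumura, and redo the second stage by hand through Cohen's structure theorem; your mixed-characteristic case implicitly uses the same lifting input that EGA packages, so the arguments are really the same.
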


\begin{proof}
There exists a morphism $g'\colon \Spec(R') \to Y$ where $R'$ is a discrete valuation ring such that $g'(s') = y_0$ and $g'(\eta') = y_1$ where $s'$ (resp.\ $\eta'$) is the closed
(resp.\ generic) point of $\Spec(R')$.

Let $\mfr'$ be the maximal ideal of $R'$ and let $\kappa$ be an algebraically closed field extension of $\kappa(y_0)$ such that there exist $\kappa(y_0)$-embeddings $\kappa(x_0) \mono \kappa$ and $\kappa(s') \mono \kappa$ and let $R' \to R$ be a flat local homomorphism of $R'$ into a complete discrete valuation ring $R$ with residue field $\kappa$ such that $\mfr'R$ is the maximal ideal of $R$ (this exists by \cite{EGA} ${\bf 0}_I$ 6.8.3). We set $g$ as the composition
\[
\Spec(R) \lto \Spec(R') \ltoover{g'} Y
\]
and $h$ as the composition
\[
\Spec(\kappa) \lto \Spec(\kappa(x_0)) \lto X.\qedhere
\]
\end{proof}

\begin{proof}[Proof of the universal openness of $\zeta^{\#}$]
Every equi-characteristic complete discrete valuation ring $R$ with residue field $k$ is isomorphic to the ring of formal power series $k\dlbrack \eps \drbrack$. Therefore by Proposition~\ref{opencrit} it suffices to show the following lemma.
\end{proof}

\begin{lemma}
Let $k$ be an algebraically closed field of characteristic $p$, let $R
= k\dlbrack \eps \drbrack$ be the ring of formal power series in one
variable $\eps$ and set $R_1 = k\dlbrack \eps^{1/p} \drbrack$. We denote by
\[
\pi\colon \Spec(R_1) \to \Spec(R)
\]
the natural morphism.

Let $x = (A,\iota,\lambda,\eta,\alpha)$ be a $k$-valued point of $\Ascr^\#_0$. Let $(P,Q,[g]) \in X_J(k)$ be the image of $x$ under $\zeta^{\#}$. Denote by $(P_\eps,Q_\eps,[g_\eps]) \in X_J(R)$ any deformation of $(P,Q,[g])$ to $R$ (which exists because $X_J$ is smooth). Then there exists a deformation $x_1 = (A_1,\iota_1,\lambda_1,\eta_1,\alpha_1) \in \Ascr^\#_0(R_1)$ of $x$ such that $\zeta^{\#}(x_1) = \pi^*(P_\eps,Q_\eps,[g_\eps])$.
\end{lemma}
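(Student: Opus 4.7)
The plan is to use Dieudonn\'e display theory (Section~\ref{sec3}) to construct a $p$-divisible group with $\Dscr$-structure over $R_1$ that lifts $A[p^\infty]$ and whose associated $\Dscr$-zip matches $\pi^*(P_\eps,Q_\eps,[g_\eps])$; Serre--Tate theory then produces $A_1$. The passage to $R_1$ is forced by the morphism $\psi\colon\Ytilde_J\otimes\kappa\to\Xtilde_J$ of~\eqref{YtildeZtilde}: inverting it at the level of $S$-valued points amounts to extracting a Frobenius preimage of a Levi subgroup, which is possible over $R_1$ but not over $R$, because $R=k\dlbrack\eps\drbrack$ coincides with the image of the absolute Frobenius of $R_1=k\dlbrack\eps^{1/p}\drbrack$. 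Applying the construction of Section~\ref{sec3} to $x$ produces, through Lemma~\ref{DescribeSplit}, a triple $\tilde y_0\in\Ytilde_J(W(k))$ whose image in $X_J(k)$ is the given $(P,Q,[g])$.

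Carry out the main step as follows. Pick a representative $g_\eps\in G(R)$ of $[g_\eps]$, adjusting it by an element of $U_{Q_\eps}(R)\cdot U_{F(P_\eps)}(R)$ (possible by smoothness of these unipotent groups and the henselian property of $R$) so that $g_\eps\otimes k=\tilde g_0\otimes k$; this yields $(P_\eps,Q_\eps,g_\eps)\in\Xtilde_J(R)$. Base change to $R_1$ and construct a Levi $\tilde L_\eps$ of $P_\eps\otimes R_1$ with $F(\tilde L_\eps)={}^{g_\eps^{-1}}Q_\eps\cap F(P_\eps)$; the right-hand side is a Levi of $F(P_\eps)\otimes R_1$ that is in fact defined over $R$. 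Since Levi subgroups of $P_\eps\otimes R_1$ form a torsor under the smooth unipotent $R_1$-group $U_{P_\eps\otimes R_1}$, the existence of $\tilde L_\eps$ reduces, after fixing a basepoint Levi, to extracting a Frobenius preimage in $U_{P_\eps\otimes R_1}(R_1)$ of an element coming from $R$. Because $U_{P_\eps\otimes R_1}$ is \'etale-locally affine space, $R=F_{R_1}(R_1)$, and $R_1$ is reduced, such a preimage exists uniquely in $R_1$; it automatically reduces to $\tilde L_0\otimes k$ over the closed fiber by the choice of $g_\eps$. One obtains $\tilde y_1\in\Ytilde_J(R_1)$ lifting $\pi^*(P_\eps,Q_\eps,[g_\eps])$ and agreeing with $\tilde y_0$ over $k$.

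Now apply Lemma~\ref{ZinkHensel} to the smooth $W(k)$-scheme $\Ytilde_J\otimes_{O_{E_v}}W(k)$ and the compatible pair $(\tilde y_1,\tilde y_0)$ to obtain $\tilde y\in\Ytilde_J(\WW(R_1))$; by Lemma~\ref{DescribeSplit} this is a split $\Dscr$-Dieudonn\'e display over $R_1$, and Theorem~\ref{DieudonneEquiv} produces a $p$-divisible group with $\Dscr$-structure $X_1$ over $R_1$ lifting $A[p^\infty]$. Serre--Tate theory supplies a unique deformation $(A_1,\iota_1,\lambda_1)$ of $(A,\iota,\lambda)$ with $A_1[p^\infty]=X_1$, and $\eta_1$ lifts uniquely from $\eta$ by \'etaleness. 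The trivialization $\alpha_1$ is the reduction modulo $\II_{R_1}$ of the tautological isomorphism $\Pcal\iso\Lambda\otimes\WW(R_1)$ implicit in $\tilde y$, and unwinding the definitions of $\zeta^{\#}$ (Section~\ref{smoothcover} and~\eqref{DefineZeta}) gives $\zeta^{\#}(x_1)=\pi^*(P_\eps,Q_\eps,[g_\eps])$. The main technical difficulty lies in the middle step, where the Frobenius-preimage extraction (the raison d'\^etre of $R_1$) must be carried out compatibly with the closed-fiber datum $\tilde y_0$; the rest is formal.
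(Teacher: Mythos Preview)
Your proof is correct and follows essentially the same route as the paper's: construct $(\Ptilde,\Ltilde,\gtilde)\in\Ytilde_J(W(k))$ from the Dieudonn\'e display of $A$, choose a representative $g_\eps$ reducing to $g$, define $L_{\eps,1}$ by the condition $F(L_{\eps,1})={}^{g_{\eps,1}^{-1}}Q_{\eps,1}\cap F(P_{\eps,1})$, apply Lemma~\ref{ZinkHensel} to lift to $\Ytilde_J(\WW(R_1))$, and finish via Theorem~\ref{DieudonneEquiv} and Serre--Tate. The paper simply asserts the existence of $L_{\eps,1}$ without comment, whereas you spell out the Frobenius-preimage argument (using $\Frob_{R_1}(R_1)=R$) that explains why the passage to $R_1$ is needed; this is a genuine clarification of a point the paper leaves to the reader.
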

Note that the smoothness of $X_J$ implies that a deformation $(P_\eps,Q_\eps,[g_\eps])$ as in the lemma always exists.
\begin{proof}
Let $\Pscr = (\Pcal,\Qcal,F,F_1)$ be the Dieudonn\'e display of the $p$-divisible group of the abelian variety $A$. The free $W(k)$-module $\Pcal$ is equipped with a perfect alternating form via $\lambda$ and an $O_B$-action. Moreover, we can fix an $O_B$-linear symplectic similitude $\agtilde$ of $\Pcal$ with $\Lambda \otimes W(k)$ which lifts the isomorphism $\alpha$ (Lemma~\ref{LocalIsom}). Set $\Pcal_\eps = \Lambda \otimes \WW(R)$ and $\Pcal_{\eps,1} = \Lambda \otimes \WW(R_1)$.

We choose a normal decomposition $\Pcal = \Scal \oplus \Tcal$ as in Definition~\ref{DefDDisplay} and obtain a split $\Dscr$-display $(\Scal,\Tcal,\Psi)$. Let $(\Ptilde,\Ltilde,\gtilde)$ be the associated element in $\Ytilde_J(W(k))$ (Construction~\ref{DieudonneToGroup}).

Then the reduction of $\gtilde$ modulo $p$ is an element $g \in [g]$ by the definition of $\zeta^{\#}$. Let $g_\eps \in [g_\eps]$ be an element such that the reduction of $g_\eps$ modulo $\eps$ is equal to $g$. Set $P_{\eps,1} := P_\eps \otimes_R R_1$, $Q_{\eps,1} := P_\eps \otimes_R R_1$ and let $g_{\eps,1}$ be the
element $g_\eps$ considered as an $R_1$-valued point of $G$.

Let $L_{\eps,1}$ be the Levi subgroup of $P_{\eps,1}$ such that
\[
F(L_{\eps,1}) = {}^{(g^{-1}_{\eps,1})}Q_{\eps,1} \cap F(P_{\eps,1}).
\]
We now apply Lemma~\ref{ZinkHensel} to the smooth scheme $\Ytilde_J$ and the ring $R_1$. We see that there exists an element
\[
(\Ptilde_{\eps,1},\Ltilde_{\eps,1},\gtilde_{\eps,1}) \in
 \Ytilde_J(\WW(R_1))
\]
whose reduction to $R_1$ equals $(P_{\eps,1},L_{\eps,1},g_{\eps,1})$ and whose reduction to $W(k)$ equals $(\Ptilde,\Ltilde,\gtilde)$.

Let $(\Scal_{\eps,1},\Tcal_{\eps,1},\Psi_{\eps,1})$ be the split $\Dscr$-display associated with $(\Ptilde_{\eps,1},\Ltilde_{\eps,1},\gtilde_{\eps,1})$ (Construction~\ref{DieudonneToGroup}) and $(X_1,\iota_1,\lambda_1)$ be the $p$-divisible group with $\Dscr$-structure corresponding to $(\Scal_{\eps,1},\Tcal_{\eps,1},\Psi_{\eps,1})$ (Theorem~\ref{DieudonneEquiv}). By Serre-Tate theory we obtain the desired point $x_1 \in \Ascr^\#_0(R_1)$.
\end{proof}


\section{Closures of Ekedahl-Oort strata}\label{sec6}

From now on we fix a maximal torus $T$ of $G$ and a Borel group $B$ containing $T$, both defined over $\ZZ_p$. This is possible by~\ref{Parabolic}. Let $(X^*(T),\Phi,X_*(T),\Phi\vdual,\Delta)$ be the corresponding based root datum. Its Weyl system is $(W,I)$. The based root datum and its Weyl system are endowed with an action by $\Gamma = \pi_1(\Spec \ZZ_p, \Spec \kgbar) = \Gal(\kgbar/\FF_p)$. We consider the conjugacy class $[\mu]$ defined by the Shimura datum (Section~\ref{sec2.1}) as a $W$-orbit in $X_*(T)$. The representative in $X_*(T)$ that is dominant with respect to $B$ is denoted by $\mu$. It is defined over $W(\kappa)$ or, if we consider only the reduction modulo $p$ of $(G,B,T)$, over $\kappa$.

\begin{theorem}\label{corclosure}
The closure of an Ekedahl-Oort stratum $\Ascr_0^w$ ($w \in \leftexp{J}{W}$) is a union of Ekedahl-Oort strata. More precisely,
\[
\overline{\Ascr_0^w} = \bigcup_{w' \preceq w} \Ascr_0^{w'},
\]
where $\preceq$ is the partial order defined in Definition~\ref{SpecializationOrder}.
\end{theorem}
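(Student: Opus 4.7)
The proof should be nearly immediate given the machinery already established. The plan is to combine the flatness of $\zeta$ (Theorem~\ref{zetaflat}), the non-emptiness of all strata (Theorem~\ref{thmEOnonempty}, which together with flatness gives surjectivity and hence faithful flatness as in Theorem~\ref{IntroZetaFlat}), and the combinatorial description of the topology of $[G_\kappa \backslash X_J] \otimes \kgbar$ provided by Proposition~\ref{DescribeZJ}.

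First I would observe that $\zeta$ is of finite type, since both $\Ascr_0$ and $[G_\kappa \backslash X_J]$ are of finite type over $\kappa$. A flat morphism locally of finite presentation between locally noetherian Deligne-Mumford or algebraic stacks is universally open, so $\zeta \otimes \kgbar$ induces an open, surjective, continuous map on the underlying topological spaces.

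Second, I would invoke the following elementary topological fact: if $f\colon X \to Y$ is open, surjective, and continuous, then $f^{-1}(\overline{S}) = \overline{f^{-1}(S)}$ for every subset $S \subseteq Y$. Continuity yields the inclusion $\overline{f^{-1}(S)} \subseteq f^{-1}(\overline{S})$; for the converse, given $x \in f^{-1}(\overline{S})$ and an open neighborhood $U$ of $x$, openness of $f$ makes $f(U)$ an open neighborhood of $f(x) \in \overline{S}$, hence $f(U) \cap S \ne \emptyset$, and lifting any element back to $U$ shows that $U$ meets $f^{-1}(S)$.

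Applying this to the point $\{w\}$ in the underlying topological space of $[G_\kappa \backslash X_J] \otimes \kgbar$ gives $\overline{\Ascr_0^w} = \zeta^{-1}(\overline{\{w\}})$. By Proposition~\ref{DescribeZJ}, the underlying topological space of $[G_\kappa \backslash X_J] \otimes \kgbar$ is homeomorphic to the space attached to the partially ordered set $(\leftexp{J}{W}, \preceq)$, in which the closure of a point $w$ is exactly $\{w' \in \leftexp{J}{W} : w' \preceq w\}$. Taking preimages under $\zeta$ and using $\Ascr_0^{w'} = \zeta^{-1}(\{w'\})$ by definition yields the desired equality $\overline{\Ascr_0^w} = \bigcup_{w' \preceq w} \Ascr_0^{w'}$. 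There is no genuine obstacle at this stage: all the difficulty has been absorbed into the flatness of $\zeta$, the non-emptiness of the strata, and the topological description of $[G_\kappa \backslash X_J] \otimes \kgbar$ via $\preceq$.
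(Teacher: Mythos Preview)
Your proof is correct and follows essentially the same approach as the paper: use that $\zeta$ is universally open to get $\overline{\Ascr_0^w} = \zeta^{-1}(\overline{\{w\}})$, then invoke Proposition~\ref{DescribeZJ} to identify $\overline{\{w\}}$ with $\{w' \preceq w\}$. One minor remark: you do not need surjectivity (and hence not Theorem~\ref{thmEOnonempty}) for the topological identity $f^{-1}(\overline{S}) = \overline{f^{-1}(S)}$; continuity plus openness suffices, which is exactly what the paper uses and avoids the forward reference to a result proved later.
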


For $p > 2$ the first assertion has been shown in \cite{We2}~(6.8). For the Siegel case, Oort has given a different parametrization of the Ekedahl-Oort strata (in terms of elementary sequences, see Section~\ref{EOSiegel2}) and also proved the first assertion (\cite{Oo1}). The description which strata appear in the closure of a given stratum is new even in the Siegel case.

\begin{proof}
By Proposition~\ref{DescribeZJ} the closure of $\{w\}$ in the underlying topological space of $[\Gbar \backslash X_J] \otimes \kgbar$ is $\set{w' \in \leftexp{J}{W}}{w' \preceq w}$. As $\zeta$ is universally open, we have $\overline{\Ascr_0^w} = \zeta^{-1}(\overline{\{w\}})$.
\end{proof}

\subsubsection*{Comparison to loop groups}
For a field $k$ and a linear algebraic group $H$ we denote by $LH$ the loop group. It is the group ind-scheme over $k$ representing the sheaf for the fpqc-topology
\[
(\text{$k$-algebras}) \rightarrow (\text{groups}), \qquad R \sends LH(R) := H(R(\!(z)\!)),
\]
see \cite{Faltings}~Definition~1. We denote by $LG$ the loop group of $G_{\mathbb{F}_p}$.

Recall that $\mu \in X_*(T)$ is the dominant coweight determined by the PEL Shimura datum $\Dscr$, defined over $\kappa$. Let $k$ be an algebraically closed extension of $\kappa$, set $K := G(k\dlbrack z\drbrack)$, and let $K_1$ be the kernel of the reduction modulo $z$ map $K\rightarrow G(k)$. We denote by $\mu(z)\in LG(k)$ the image under $\mu\colon \mathbb{G}_m\rightarrow T$ of $z\in \mathbb{G}_m(k(\! ( z)\! ) )$. For all $w\in W$ we have a chosen representative $\dot w\in G(k)\hookrightarrow G(k(\! ( z)\! ) )$. As in \cite{trunc1} we consider for each $w \in \leftexp{J}{W}$ the corresponding truncation stratum in the loop group of $LG$ which is defined as the locally closed reduced subscheme of $LG \otimes_{\FF_p} \kappa(w)$ (where $\kappa(w)$ is the field of definition of $w$, see Remark~\ref{EOFieldOfDef}) with 
\[
\mathcal{S}_{w,\mu}(k) = \set{k^{-1}\dot w\dot x_{\mu}\mu(z)k_1\sigma(k)}{k\in K, k_1\in K_1},
\]
where $x_\mu := w_0w_{0,\bar\varphi(J)}=x$ is the element already considered in~\eqref{Shapex}. Remark~\ref{DZipToPDiv} below explains why this can be considered as an ``Ekedahl-Oort stratum for loop groups''.

\begin{corollary}\label{corloopeo}
A stratum $\Ascr^{w'}_0$ is contained in the closure of $\Ascr_0^w$ if and only if $\mathcal{S}_{w',\mu}$ is contained in the closure of $\mathcal{S}_{w,\mu}$ in $LG$.
\end{corollary}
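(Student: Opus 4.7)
The plan is to reduce both containments to the same combinatorial condition $w' \preceq w$ in $(\leftexp{J}{W}, \preceq)$ and then deduce the equivalence.

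First, for the Shimura variety side, Theorem~\ref{corclosure} gives
\[
\overline{\Ascr^w_0} = \bigcup_{w' \preceq w} \Ascr^{w'}_0,
\]
so $\Ascr^{w'}_0 \subseteq \overline{\Ascr^w_0}$ if and only if $w' \preceq w$. This direction is immediate and requires no further work.

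For the loop group side, I would invoke the analogous closure description from \cite{trunc1}. By construction, $\mathcal{S}_{w,\mu}$ is the orbit of $\dot w \dot x_\mu \mu(z)$ under the action $(k,k_1) \cdot g = k^{-1} g k_1 \sigma(k)$ of $K \times K_1$, and as $w$ ranges over $\leftexp{J}{W}$ these orbits form a locally closed stratification of the generalized affine Schubert cell $K\mu(z)K \subset LG$. This matches, on the Dieudonn\'e-module side, the stratification by isomorphism classes of truncated ($\Dscr$-structured) Dieudonn\'e modules — precisely the content of Remark~\ref{DZipToPDiv}, which justifies the name ``Ekedahl-Oort stratum for loop groups.'' The main result of \cite{trunc1} then identifies the closure relations among the $\mathcal{S}_{w,\mu}$ with the partial order $\preceq$ of Definition~\ref{SpecializationOrder}, so that $\mathcal{S}_{w',\mu} \subseteq \overline{\mathcal{S}_{w,\mu}}$ holds if and only if $w' \preceq w$. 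Combining this with the preceding paragraph finishes the proof.

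The only point that requires checking, and would be the main obstacle if not already settled in \cite{trunc1}, is the consistency of the two partial orders: the order from \cite{PWZ} controlling $G$-orbit closures on $X_J$ must agree with the order from \cite{trunc1} controlling closures of truncation strata in $LG$. Both are defined via the Bruhat order on $W$ twisted by $W_J$-conjugation and by the element $x = w_0 w_{0,\bar\varphi(J)}$, and this identification is already carried out in \cite{trunc1}, so no additional geometric input beyond Theorem~\ref{corclosure} and the results of \cite{trunc1} is needed.
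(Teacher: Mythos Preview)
Your argument is correct and is essentially the paper's own proof: both sides are reduced to the combinatorial condition $w' \preceq w$, using Theorem~\ref{corclosure} for the Shimura-variety side and \cite{trunc1}, Corollary~4.7 for the loop-group side. Your final paragraph about the consistency of the two partial orders is exactly the point the paper is implicitly invoking when it cites the explicit closure descriptions from both sources.
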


\begin{proof}
This follows from the explicit descriptions of the closure relations in Theorem~\ref{corclosure} and in \cite{trunc1}, Corollary 4.7.
\end{proof}


\section{The Newton stratification}\label{sec7}
\subsection{Affine Weyl groups}\label{AffWeyl}
To be able to compare the moduli space $\Ascr_0$ with the loop group $LG$ we make the following definitions. We consider two cases in which $O_L$ denotes either $W(k)$ or $k\dlbrack z\drbrack$, where $k$ is an algebraically closed field of characteristic $p$. Let $L$ be the field of fractions of $O_L$. By $\epsilon$ we denote the uniformizer $p$ or $z$.

By $\Ical$ we denote the inverse of $B$ under the projection $G(O_L) \rightarrow G(k)$. 

Let $\widetilde{W}=N_T(L)/T(O_L)\cong W\ltimes X_*(T)$ denote the extended affine Weyl group of $G$. It has a decomposition $\widetilde W\cong \Omega\ltimes W_{\rm aff}$. Here $\Omega$ is the subset of elements of $\widetilde{W}$ which stabilize the chosen Iwahori subgroup $\Ical$. The second factor $W_{\rm aff}$ is the affine Weyl group of $G$, an infinite Coxeter group generated by the simple reflections together with the simple affine reflection defined by $\Ical$. If $M$ is a Levi subgroup of $G$ containing $T$, let $\Ical_M=\Ical\cap M(O_L)$. Let $W_M$ and $\widetilde{W}_M$ be the Weyl group and extended affine Weyl group for $M$. They are canonically subgroups of $W$ and $\widetilde{W}$. Let $\Omega_M$ be the subgroup of elements of $\widetilde{W}_M$ which stabilize $\Ical_M$.

For $\lambda\in X_*(T)$ we have the representative $\lambda(\epsilon)\in T(L)$. Together with the chosen representatives of $W$ we have representatives $\dot x$ in $G(L)$ for all elements $x \in \widetilde{W}$. By the Bruhat-Tits decomposition (see \cite{Tits}) each element of $G(L)$ is contained in a double coset $\Ical\dot x\Ical$ for a unique $x\in \widetilde{W}$.

Let $\mathfrak{a}=X_*(T)\otimes_{\mathbb{Z}}\mathbb{R}$ and let $\mathfrak{a}_M$ be the subset of elements which are centralized by $M$. Finally we denote by $\overline{\mathbf a}$ the unique alcove in the antidominant Weyl chamber of the standard apartment of the Bruhat-Tits building of $G$ whose closure contains the origin. Then $\Ical$ is the Iwahori subgroup fixing $\overline{\mathbf a}$. We call $\overline{\mathbf a}$ the opposite base alcove.

\subsection{Crystals and isocrystals with $\Dscr$-structure attached to points in $\Ascr_0$}\label{CGBG}

\subsubsection*{The map $\Upsilon$}
Let $k$ be an algebraically closed field extension of $\kappa$. By (covariant) Dieudonn\'e theory, isomorphism classes of $p$-divisible groups $(X,\iota,\lambda)$ with $\Dscr$-structure over $k$ are in bijection with isomorphism classes of their Dieudonn\'e modules $(\Pcal,F)$. Here $\Pcal$ is a $\Dscr$-module over $W(k)$ and $F\colon \Pcal^{\sigma} \to \Pcal$ is an injective $W(k) \otimes O_B$-linear map which preserves the symplectic form up to the scalar $p \in W(k)$. Thus after tensoring with $L := \Frac W(k)$, $F$ is an isomorphism of $\Dscr$-modules over $L$.

By Lemma~\ref{LocalIsom} we can choose an isomorphism $\alpha\colon \Pcal \iso \Lambda \otimes_{\ZZ_p} W(k)$ of $\Dscr$-modules. By transport of structure with $\alpha$ we obtain $F = (\id_{\Lambda} \otimes \sigma)b$ for some $b \in G(L)$. Let $K=G(W(k))$. Then for a different trivialization $\alpha'$ we have $\alpha' = \sigma(g) \circ \alpha$ for some $g \in K$ and $b$ is replaced by $g^{-1}b\sigma(g)$.

If we denote by $C(G) = C_k(G)$ the set of $K$-$\sigma$-conjugacy classes
\[
\dlbrack b \drbrack := \set{g^{-1}b\sigma(g)}{g\in K}
\]
of elements $b\in G(L)$, we therefore obtain an injective map
\begin{equation}\label{PDivGroup}
\left\{\begin{matrix}
\text{isomorphism classes of $p$-divisible groups}\\
\text{with $\Dscr$-structure over $k$}
\end{matrix}\right\} \mono C_k(G).
\end{equation}
The image consists of the subset $C(G,\mu) = C_k(G,\mu)$ of $K$-$\sigma$-conjugacy classes of elements $g \in K\mu(p)K$.

Let $\APlus$ be a $k$-valued point of $\Ascr_0$. Then the $p$-divisible group $A[p^{\infty}]$ carries a $\Dscr$-structure and thus we can attach an element $\dlbrack b \drbrack \in C(G)$ which we denote by $\Upsilon\APlus$. We obtain a map
\begin{equation}\label{DefUps}
\Upsilon\colon \Ascr_0(k) \to C(G,\mu).
\end{equation}

\begin{remark}\label{DZipToPDiv}
For a $p$-divisible group with $\Dscr$-structure the (covariant) Dieudonn\'e module of its $p$-torsion carries the structure of a $\Dscr$-zip (cf.~Example~\ref{DieudonneZip}). This induces a map on isomorphism classes, i.e., by~\eqref{PDivGroup} and~\eqref{IsoClassesDZip} a map
\begin{equation}\label{Truncation}
{\rm tc}\colon C(G,\mu) \lto \leftexp{J}{W},
\end{equation}
which we call \emph{truncation map (at level $1$)}.

This map is surjective: It follows from Remark~\ref{RecoverF} and~\eqref{DescribeXJOrbit} that for $w \in \leftexp{J}{W}$ a pre-image is given by $\dlbrack \dot w \dot x \mu(p) \drbrack$ where $x$ is as in (\ref{Shapex}).

As explained in the introduction of~\cite{trunc1}, ${\rm tc}$ maps $\dlbrack g \drbrack$ and $\dlbrack g' \drbrack$ of $C(G,\mu)$ to the same element if and only if
\begin{equation}\label{SameTruncation}
\exists\ k_1,k'_1 \in K_1 := \Ker(K \to G(k)): \quad \dlbrack k_1g'k'_1 \drbrack = \dlbrack g \drbrack.
\end{equation}
As $K_1 \subset K$, this condition is also equivalent to the existence of $k_1 \in K_1$ such that $\dlbrack g'k_1 \drbrack = \dlbrack g \drbrack$.
\end{remark}

\subsubsection*{The sets $B(G)$ and $B(G,\mu)$}
Again let $k$ be an algebraically closed extension of $\kappa$. We now come to the group theoretical classification of isocrystals over $k$. The map~\eqref{PDivGroup} induces a bijective map
\begin{equation}\label{IsocGroup}
\left\{\begin{matrix}
\text{isogeny classes of $p$-divisible groups}\\
\text{with $\Dscr$-structure over $k$}
\end{matrix}\right\} \liso B(G,\mu) \subseteq B(G),
\end{equation}
where $B(G)$ is the set of $G(\Frac W(k))$-$\sigma$-conjugacy classes in $G(\Frac W(k))$, and where we define $B(G,\mu)$ to be the image of $C(G,\mu)$ in $B(G)$ under the canonical surjection $C(G) \to B(G)$. 

We recall the classification of $B(G)$ and $B(G,\mu)$ (following~\cite{RapoportGuide}~\S4 supplemented by some more recent results). In order to compare with results from the case of loop groups, we work in a more general situation: We consider the two cases $L = \Frac W(k)$ and $L=k(\!(z)\!)$. Let $O_L$ be its ring of integers and $\epsilon$ the uniformizer $p$ or $z$. Let $B(G)$ be the set of $G(L)$-$\sigma$-conjugacy classes $\set{g^{-1}b_0\sigma(g)}{g\in G(L)}$ of elements $b_0 \in G(L)$.

The elements of $B(G)$ are classified (in much greater generality) by Kottwitz \cite{Kottwitz1}. In \cite{Kottwitz1} only the case $L=\Frac W(k)$ is considered. However, the same arguments show the analogous classification for the other case. Let $(X_*(T) \otimes \QQ)_{\rm dom}$ be the cone of rational cocharacters of $T$ that are dominant with respect our chosen Borel subgroup. As the Borel subgroup is chosen over $\ZZ_p$, $(X_*(T) \otimes \QQ)_{\rm dom}$ is preserved by the action of the Galois group $\Gamma = \Gal(\kgbar/\FF_p)$. There is a map
\[
\nu\colon B(G) \to (X_*(T) \otimes \QQ)_{\rm dom}^{\Gamma}.
\]
We call $\nu(b)$ the Newton polygon of $b$. For $G=\GL_n$ it coincides with the usual Newton polygon of $F$-isocrystals.

Furthermore Kottwitz \cite{Kottwitz1} defines a map $\kappa_G\colon G(L)\rightarrow \pi_1(G)_{\Gamma}$ (to be precise, the reformulation using $\pi_1(G)_{\Gamma}$ is due to Rapoport and Richartz, \cite{RapoportRichartz}, Section 1). Here $\pi_1(G)$ is the quotient of $X_*(T)$ by the coroot lattice and $\pi_1(G)_{\Gamma}$ denotes the coinvariants under the Galois group $\Gamma$. In our situation this map has the following simplified description: Let $\tilde b \in G(L)$ be a representative of $b$ and let $\lambda \in X_*(T)$ be the unique dominant cocharacter with $\tilde b \in G(O_L)\lambda(\epsilon)G(O_L)$. Then $\kappa(b)$ is the image $\lambda^{\flat}$ of $\lambda$ under the projection $X_*(T)\rightarrow \pi_1(G)_{\Gamma}$. Indeed, the maps $\kappa_G$ considered by Kottwitz are invariant under $\sigma$-conjugation, they are group homomorphisms, and are natural transformations in $G$. For $G=\mathbb{G}_m$ we have $\kappa_G(b)=v_p(b)$. In our case the properties of $\kappa_G$ mentioned above show that $\kappa_G$ is trivial on $G(O_L)$ as each such element is $\sigma$-conjugate to $1$ (by a version of Lang's theorem), and that the torus element $\lambda(\epsilon)$ is mapped to its image in $\pi_1(G)_{\Gamma}$. Together we obtain the explicit description of $\kappa_G$ given above.

On $X_*(T) \otimes \QQ$ one considers the standard order which is given by $\nu' \leq \nu$ if and only if $\nu-\nu'$ is a non-negative linear combination of positive coroots. It induces a partial ordering on $B(G)$ by setting $b\leq b'$ if and only if $\nu(b)\leq\nu(b')$ and $\kappa(b)=\kappa(b')$. 

An element $b \in B(G)$ is determined by $\nu(b)$ and $\kappa(b)$. Note that $(X_*(T) \otimes \QQ)_{\rm dom}^{\Gamma} \times \pi_1(G)_{\Gamma}$ is the same for both cases of $L$. Furthermore, the explicit description of the image of the maps $\nu$ and $\kappa$ (as for example in \cite{RapoportRichartz}, Remark 1.18) shows that it also does not depend on these cases and that it is independent of the choice of the algebraically closed field $k$. In this way we obtain a bijection of partially ordered sets between $B(G)$ for $L=\Frac W(k)$ and $L=k(\!(z)\!)$.

We now give a description of $B(G,\mu)$. Recall that we denoted by $\mu \in X_*(T)_{\rm dom}$ the dominant representative of the conjugacy class $[\mu]$ from the Shimura datum $\Dscr$. Let $\Gamma_{\mu}$ be the stabilizer of $\mu$ in $\Gamma$ and set
\[
\mgbar := [\Gamma : \Gamma_{\mu}]^{-1} \sum_{\tau \in \Gamma/\Gamma_{\mu}} \tau(\mu) \in (X_*(T) \otimes \QQ)^{\Gamma}_{\rm dom}.
\]
Then by the work of Rapoport and Richartz \cite{RapoportRichartz}, Kottwitz and Rapoport \cite{KR}, Lucarelli \cite{Lucarelli} and Gashi \cite{Gashi}, we have
\begin{equation}\label{DefBGmu}
B(G,\mu) = \set{b \in B(G)}{\nu(b) \leq \mgbar, \kappa(b) = \mu^{\flat}}.
\end{equation}
This is a finite set. The same arguments show the analogous assertion for $L=k((z))$. Note that this condition for non-emptiness of the intersection is a purely group-theoretic condition in terms of $b\in B(G)$ and $\mu\in X_*(T)$, in particular, we obtain a canonical bijection between the partially ordered sets $B(G,\mu)$ for the two cases.

\subsubsection*{The Newton stratification of $\Ascr_0$}
The composition of $\Upsilon$~\eqref{DefUps} with $C(G) \to B(G)$ is given by the map that attaches to a $k$-valued point $\APlus$ of $\Ascr_0$ the isogeny class of its $p$-divisible group with $\Dscr$-structure. We call its image in $B(G)$ the Newton point of $\APlus$. It lies in $B(G,\mu)$. For $s \in \Ascr_0$ (i.e., $s$ is a point of the underlying topological space of $\Ascr_0$) let $\Oscr_{\Ascr_0,s}$ be the local ring of $\Ascr_0$ as Deligne-Mumford stack (if $\Ascr_0$ is a scheme, then $\Oscr_{\Ascr_0,s}$ is a strict henselization of the usual local ring at the point $s$). Choose an algebraically closed extension $k$ of the residue field $\kappa(s)$ of $\Oscr_{\Ascr_0,s}$. We denote the Newton point of the composition $\Spec k \to \Spec \kappa(s) \to \Ascr_0$ by ${\rm Nt}(s)$. The independence of $B(G)$ of $k$ shows that ${\rm Nt}(s)$ does not depend on the choice of $k$. Thus we obtain a map
\[
{\rm Nt}\colon \Ascr_0 \to B(G,\mu).
\]
For $b \in B(G,\mu)$ we denote by $\mathcal{N}_b$ the set of points $s \in \Ascr_0$ such that ${\rm Nt}(s) = b$. By~\cite{RapoportRichartz}, $\Ncal_b$ is a locally closed subset of the underlying topological space of $\Ascr_0$ and there is an inclusion of closed subsets
\begin{equation}\label{NewtonSpec}
\overline{\Ncal_b} \subseteq \bigcup_{b' \leq b} \Ncal_{b'}.
\end{equation}
This is a group-theoretic formulation of Grothendieck's specialization theorem. We endow $\Ncal_b$ with its reduced structure of a locally closed substack (a reduced subscheme if $\Ascr_0$ is a scheme).

\begin{definition}\label{DefineNewton}
$\Ncal_b$ is called the \emph{Newton stratum attached to $b \in B(G,\mu)$}.
\end{definition}

\begin{example}\label{MuOrdBasic}
There is a unique maximal element $b_{\mu}$ and a unique minimal element $b_{\rm basic}$ in $B(G,\mu)$ characterized by $\nu(b_{\mu}) = \mgbar$ and by $\nu(b_{\rm basic}) \in X_*(Z)_{\QQ}$, where $Z \subset T$ is the center of $G$. The corresponding Newton strata $\Ncal_{b_{\mu}}$ and $\Ncal_{b_{\rm basic}}$ are called the \emph{$\mu$-ordinary Newton stratum} and the \emph{basic Newton stratum}, respectively.

By~\eqref{NewtonSpec} $\Ncal_{b_{\rm basic}}$ is closed in $\Ascr_0$. Moreover, \eqref{NewtonSpec} shows that $\Ncal_{b_{\mu}}$ is the complement of the (finite) union of the closures of the non-$\mu$-ordinary Newton strata. Therefore $\Ncal_{b_{\mu}}$ is open in $\Ascr_0$.
\end{example}


\section{Minimal Ekedahl-Oort strata}

\subsection{Minimal Ekedahl-Oort strata and fundamental elements}\label{SecMinFund}
For $p$-divisible groups over an algebraically closed field of positive characteristic Oort defines within each isogeny class of $p$-divisible groups a single isomorphism class which he calls minimal (\cite{Oort2}). His definition is recalled in Remark~\ref{exminpdg} below. It is equivalent to the condition that the ring of endomorphisms of the Dieudonn\'e module of a minimal $p$-divisible group is a maximal order in the endomorphisms of its isocrystal. Oort proves (\cite{Oort2} and \cite{Oort_Simple}) that a $p$-divisible group $X$ is minimal if and only if for any $p$-divisible group $X'$ one has
\begin{equation}\label{OortMinimal}
X[p] \cong X'[p] \implies X \cong X'.
\end{equation}
Similar results for $p$-divisible groups with a principal polarization $\lambda\colon X \iso X\vdual$ follow by using uniqueness of polarizations (\cite{Oort1}~Corollary~(3.8)). For general $p$-divisible groups with $\Dscr$-structure we take the group-theoretic analogue of~\eqref{OortMinimal} as a definition for minimality.

\begin{remark}\label{ExplainMinimal}
Let $k$ be an algebraically closed extension of $\kappa$, $L := \Frac W(k)$, and recall that $K=G(W(k))$ and $K_1 = \Ker(K \to G(k))$. Let $(X,\iota,\lambda)$ be a $p$-divisible group with $\Dscr$-structure over $k$ and let $x \in G(L)$ such that the isomorphism class of $(X,\iota,\lambda)$ is given by the $K$-$\sigma$-conjugacy class $\dbrack{x} \in C_k(G,\mu)$ via~\eqref{PDivGroup}. Then the following conditions are equivalent.
\begin{equivlist}
\item
For all $y \in K_1xK_1$ there exists a $g \in K$ such that $gy\sigma(g)^{-1} = x$.
\item
For all $x' \in \dbrack{x}$ and for all $y \in K_1x'K_1$ there exists a $g \in K$ such that $gy\sigma(g)^{-1} = x'$.
\item
Any $p$-divisible group with $\Dscr$-structure $(X',\iota',\lambda')$ with $(X',\iota',\lambda')[p] \cong (X,\iota,\lambda)[p]$ is isomorphic to $(X,\iota,\lambda)$.
\end{equivlist}
The equivalence of~(i) and~(ii) is immediate because $K_1$ is a normal subgroup of $K$ and the equivalence of~(ii) and~(iii) follows from~\eqref{SameTruncation}.
Clearly the equivalent conditions above depend (for a fixed field $k$) only on the isomorphism class of $(X,\iota,\lambda)[p]$, i.e., on ${\rm tc}(\dbrack{x}) \in \leftexp{J}{W}$~\eqref{Truncation}. They can be expressed by the condition that ${\rm tc}^{-1}({\rm tc}(\dbrack{x}))$ consists of a single element in $C_k(G,\mu)$.
\end{remark}

\begin{definition}\label{defminimal}
\begin{assertionlist}
\item
Let $k$, $L$ and $K_1$ be as in Remark~\ref{ExplainMinimal}. An element $x\in G(L)$ is called \emph{minimal} if the equivalent conditions of Remark \ref{ExplainMinimal} hold. Then each element of $\dlbrack x \drbrack \in C_k(G)$ is also minimal and we call $\dlbrack x \drbrack $ \emph{minimal}. A $p$-divisible group with $\Dscr$-structure over $k$ is called \emph{minimal} if its class in $C_k(G,\mu)$ is minimal.
\item
An element $w \in \leftexp{J}{W}$ (or its corresponding Ekedahl-Oort stratum $\Ascr^w_0$) is called \emph{minimal} if there exists a minimal $\dbrack x \in C_{\kgbar}(G,\mu)$ such that ${\rm tc}(\dbrack x) = w$.
\end{assertionlist}
\end{definition}

\begin{remark}\label{GaloisMinimal}
If $x \in G(L)$ is minimal, then $\sigma(x)$ is also minimal. This shows that if $w \in \leftexp{J}{W}$ is minimal, then each element in the same $\Gamma_k$-orbit of $w$ (Remark~\ref{EOFieldOfDef}) is also minimal.
\end{remark}

\begin{example}\label{MuOrdMinimal}
Moonen (\cite{Mo_SerreTate}, Theorem in Section~0.3) has shown that the $\mu$-ordinary Ekedahl-Oort stratum is minimal in the sense of Definition~\ref{defminimal}.
\end{example}

Our next goal is to study the question whether each Newton stratum contains a minimal Ekedahl-Oort stratum. We first translate this question into group-theoretic language. We obtain our results using a variant of the notion of fundamental elements introduced by G\"ortz, Haines, Kottwitz, and Reuman in~\cite{GHKR2}, and generalized by Viehmann to unramified groups in \cite{trunc1}.

To construct abelian varieties with additional structures we will use the following principle. Let $\Dscr=\bigl(B,\star,V,\lrangle, O_B, \Lambda, h\bigr)$ and $\Dscr'=\bigl(B,\star,V,\lrangle', O_B, \Lambda, h'\bigr)$ be two unramified Shimura-PEL-data which agree except for the data $\lrangle$ and $h$ and such that the $(B,\star)$-skew hermitian spaces $(V,\lrangle)$ and $(V,\lrangle')$ are isomorphic over $\QQ_p$ (after choosing such an isomorphism it thus makes sense to consider the same $\Lambda$). Let $\Gbf$ and $\Gbf'$ be the reductive $\QQ$-groups associated to $\Dscr$ and $\Dscr'$, respectively. Choose compact open subgroups $C \subset \Gbf(\AA^p_f)$ and $C' \subset \Gbf'(\AA^p_f)$ and let $\Ascr_0$ and $\Ascr'_0$ the special fibers of $\Ascr_{\Dscr,C}$ and $\Ascr_{\Dscr',C'}$, defined over finite fields $\kappa$ and $\kappa'$.

\begin{lemma}\label{IsogAV}
Let $k$ be an algebraically closed extension of $\kappa$ and of $\kappa'$, let $(A',\iota',\lambda',\eta') \in \Ascr'_0(k)$, and let $X' = (X',\iota',\lambda')$ be the $p$-divisible group with $\Dscr'$-structure of $(A',\iota',\lambda',\eta')$. Let $X = (X,\iota,\lambda)$ be a $p$-divisible group with $\Dscr$-structure and let $\rho\colon X' \rightarrow X$ be an $O_B$-linear isogeny of $p$-divisible groups compatible with the polarizations on $X'$ and $X$ up to a power of $p$. Then there exist a $k$-valued point $\APlus$ of the moduli space $\Ascr_{0}$ associated with $\Dscr$ and an $O_B$-linear isogeny $f\colon A' \to A$ with $f^*\lambda = p^e\lambda$ for some $e \geq 0$ such that $\APlus[p^{\infty}] = (X,\iota,\lambda)$ (up to isomorphism of $p$-divisible groups with $\Dscr'$-structure) and such that $f[p^{\infty}] = \rho$.
\end{lemma}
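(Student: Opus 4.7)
The plan is to realize $A$ as the quotient of $A'$ by the finite flat subgroup scheme $K := \ker(\rho)$, which is contained in $A'[p^N] \subseteq A'$ for some sufficiently large $N$. Since $\rho$ is $O_B$-linear, $K$ is $O_B$-stable, so the quotient abelian variety $A := A'/K$ inherits an $O_B$-action $\iota\colon O_B \to \End(A) \otimes \ZZ_{(p)}$, and the quotient morphism $f\colon A' \to A$ is an $O_B$-linear isogeny of $p$-power degree. Under the canonical identification $A[p^{\infty}] = X'/K$, the induced isomorphism $\bar\rho\colon X'/K \iso X$ identifies $f[p^{\infty}]$ with $\rho$. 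This $\bar\rho$ is what we use to equip $A[p^{\infty}]$ with the prescribed $\Dscr$-structure.

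To construct $\lambda_A$, observe that the compatibility hypothesis $\rho^\vee \circ \lambda \circ \rho = p^e \lambda'$ implies $K \subseteq \ker(p^e \lambda')$. Since $p^e \lambda'$ is a (scalar multiple of a) polarization, and $K$ is therefore totally isotropic with respect to the Weil pairing induced by $p^e\lambda'$ (this is the content of $\rho$ carrying the pairing on $X'$ into the one on $X$), standard descent for abelian varieties gives a polarization $\lambda_A\colon A \to A^\vee$ with $f^\vee \circ \lambda_A \circ f = p^e \lambda'$. By construction $\lambda_A[p^\infty]$ agrees, through $\bar\rho$, with $\lambda$ up to a $\ZZ_{(p)}^\times$-scalar, so after adjusting inside the polarization class we may take the two to coincide. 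For the level structure I would use that $f$ has $p$-power degree, so $V^p(f)\colon V^p(A') \iso V^p(A)$ is an isomorphism of prime-to-$p$ rational Tate modules carrying the symplectic form from $\lambda'$ to the one from $\lambda_A$ (the factor $p^e$ is a unit away from $p$); transporting $\eta'$ through $V^p(f)$ produces the required $\eta$. The determinant condition on $\Lie(A)$ is automatic because $\Lie(A) = \Lie(A[p^\infty]) = \Lie(X)$ as $O_B$-modules and $(X,\iota,\lambda)$ has $\Dscr$-structure.

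The main obstacle is showing that the constructed tuple $\APlus$ actually lies in the moduli space attached to $\Dscr$ rather than to $\Dscr'$. The only place the two Shimura data diverge integrally at $p$ is the symplectic form and the Hodge cocharacter, and both pieces of information are encoded in the $\Dscr$-structure on $X$: the determinant condition (hence the cocharacter type) comes from $\Lie(X) = \Lie(A)$, and the polarization type at $p$ comes from $\lambda$. The prime-to-$p$ PEL data, including the symplectic form on $V^p$ and the admissible level structures, are transported verbatim through the isogeny $f$, and the assumed $\QQ_p$-isomorphism $(V,\lrangle) \otimes \QQ_p \cong (V,\lrangle') \otimes \QQ_p$ guarantees that the result is well-defined as a point for $\Dscr$. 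A smaller technical point, needed in the descent of $\lambda_A$, is that $p^e \lambda'$ is genuinely symmetric and that its kernel contains $K$ in the strong sense required to produce a morphism $A \to A^\vee$ and not merely $A \to A'^\vee$; this ultimately follows from the symmetry of $\lambda$ and $\lambda'$ together with $\rho^\vee \lambda \rho = p^e \lambda'$.
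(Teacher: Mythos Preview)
Your construction---form $A=A'/\ker(\rho)$, transport the $O_B$-action, descend the polarization via the isotropy of $\ker(\rho)$ for the Weil pairing of $p^e\lambda'$, carry the level structure through the prime-to-$p$ isomorphism $V^p(f)$, and read off the determinant condition from $\Lie(A)=\Lie(X)$---is exactly the paper's; its proof is three sentences, and you have simply spelled out more of the standard details. One small wrinkle in your final paragraph: the assumed $\QQ_p$-isomorphism of the skew-hermitian spaces is not what justifies that the transported level structure is valid for~$\Dscr$, since $\eta$ is $\AA_f^p$-data and nothing is assumed about $\lrangle$ versus $\lrangle'$ away from~$p$; the paper is equally terse here and explicitly disclaims, in the remark following the lemma, any compatibility of $f$ with the level structures.
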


It is not claimed that $f$ satisfies any compatibility with the level structures.

\begin{proof}
Dividing $A'$ by $C := \Ker(\rho)$ yields an isogeny $f\colon A' \to A := A'/C$ between $A'$ and an abelian variety $A$ with $A[p^{\infty}] = X$. Furthermore, the $\Dscr'$-structure (resp.\ the level structure) on $A'$ induces an $O_B$-action and a polarization $(\iota,\lambda)$ on $A$ (resp.~a level structure $\eta$ on $A$) whose restrictions to $X$ are the given ones. The determinant condition for $\Dscr$ holds for $A$ because it holds for $X$. 
\end{proof}

\begin{lemma}\label{lemFundAlc}
Let $\APlus$ be a $k$-valued point of $\Ascr_0$ and let $b$ be its image in $B(G,\mu)$. There exists a minimal $k$-valued point $\APlusOne$ which is isogenous to $\APlus$ (i.e., there exists an $O_B$-linear isogeny $f\colon A \to A_1$ such that $f^*\lambda_1 = p^e\lambda$ for some $e \geq 0$ and $f^*\eta_1 = \eta$) if and only if the following condition holds.
\begin{equation}\label{eqstar}
\text{$K\mu(p)K\cap b \subseteq G(L)$ contains a minimal element}.
\end{equation}
\end{lemma}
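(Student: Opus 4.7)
\emph{Proof plan.} My plan is to combine covariant Dieudonn\'e theory with Lemma~\ref{IsogAV} to pass between moduli points and elements of $G(L)$.

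For the direction $(\Rightarrow)$, suppose $\APlusOne$ is a minimal $k$-valued point of $\Ascr_0$ isogenous to $\APlus$. I would take any representative $x_1 \in K\mu(p)K$ of $\Upsilon\APlusOne \in C(G,\mu)$; by Definition~\ref{defminimal}, $x_1$ is minimal. An $O_B$-linear isogeny $A \to A_1$ compatible with polarizations up to a power of $p$ induces, upon inverting $p$, an $L \otimes O_B$-linear isomorphism of rational Dieudonn\'e modules preserving the symplectic similitude, so $\APlus$ and $\APlusOne$ have the same image $b$ in $B(G,\mu)$. Thus $x_1 \in K\mu(p)K \cap b$ is the required minimal element.

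For the direction $(\Leftarrow)$, I would start from a minimal $x \in K\mu(p)K \cap b$, let $X_1 = (X_1,\iota_1,\lambda_1)$ be the minimal $p$-divisible group with $\Dscr$-structure corresponding to $\dbrack{x} \in C(G,\mu)$ under~\eqref{PDivGroup}, and set $X := (A[p^\infty],\iota,\lambda)$. Since $X$ and $X_1$ have the same Newton point $b$, their rational Dieudonn\'e modules are $G(L)$-isomorphic; rescaling a chosen isomorphism by a sufficiently large power of $p$ yields an $O_B$-linear isogeny $\rho \colon X \to X_1$ with $\rho^*\lambda_1 = p^e \lambda$ for some $e \geq 0$. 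Lemma~\ref{IsogAV}, applied with $\Dscr' = \Dscr$ to $\APlus$ and $\rho$, then produces a $k$-valued moduli point $(A_1,\iota_1,\lambda_1,\eta_1')$ whose $p$-divisible group with $\Dscr$-structure is $X_1$, together with an $O_B$-linear $p$-power isogeny $f \colon A \to A_1$ satisfying $f^*\lambda_1 = p^e \lambda$ and $f[p^\infty] = \rho$. Since $f$ is of $p$-power degree it restricts to an isomorphism of prime-to-$p$ Tate modules, so I would discard $\eta_1'$ and replace it by the unique level structure $\eta_1$ on $A_1$ with $f^*\eta_1 = \eta$. The resulting $\APlusOne$ is then isogenous to $\APlus$ via $f$ and is minimal because its $p$-divisible group with $\Dscr$-structure is the minimal $X_1$.

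The hardest step will be constructing the isogeny $\rho$ with polarization ratio an \emph{integral} power of $p$, rather than merely a power of $p$ times an arbitrary unit of $W(k)^{\times}$. The key observation is that if $g \in G(L)$ intertwines the Frobenii of $X$ and $X_1$, then $c(g) \in \QQ_p^{\times}$ by $\sigma$-equivariance of the similitude character (both Frobenii have similitude factor $p$ up to units in $\ZZ_p^{\times}$). One then chooses an appropriate scaling by a power of $p$ so that the resulting integral map between Dieudonn\'e modules lands in $M_1$ and has similitude factor in $p^{\ZZ_{\geq 0}} \cdot \ZZ_p^{\times}$, compatible with the $\ZZ_p^{\times}$-equivalence class of the polarization required by Definition~\ref{DefDPdiv}.
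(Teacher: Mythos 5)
Your proposal is correct and follows essentially the same route as the paper: the forward direction is the identification of $\Upsilon\APlusOne$ as a minimal class in $K\mu(p)K\cap b$, and the converse constructs $X_1$ from a minimal $\dbrack{x}$, produces an isogeny $\rho$ from a $\sigma$-conjugating element $h\in G(L)$ (rescaled by a power of $p$), and applies Lemma~\ref{IsogAV} with $\Dscr'=\Dscr$. You merely spell out the two technicalities that the paper leaves implicit — that $c(h)$ is $\sigma$-invariant, hence in $\QQ_p^\times$, and that the level structure can be transported along the $p$-power isogeny.
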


\begin{proof}
The condition is clearly necessary. Conversely, let $g \in K\mu(p)K \cap b$ be a minimal element. Let $X = (X,\iota,\lambda)$ be the $p$-divisible group with $\Dscr$-structure of $\APlus$ and let $b_0\in G(L)$ be a representative of the corresponding class in $C(G,\mu)$. Let $h\in G(L)$ with $g=h^{-1}b_0\sigma(h)$. Let $X_1$ be a $p$-divisible group with $\Dscr$-structure whose corresponding class is $\dlbrack g\drbrack\in C(G,\mu)$. Then $X_1$ is minimal and $h$ induces an isogeny $\rho\colon X \rightarrow X_1$ of $p$-divisible groups with $\Dscr$-structure. Thus the lemma follows from Lemma~\ref{IsogAV} for $\Dscr=\Dscr'$.
\end{proof}

\begin{definition}\label{defpfund}
\begin{assertionlist}
\item
Let $F$ be a finite unramified extension of $\QQ_p$ and let $P$ be a semistandard parabolic subgroup of $G_{O_F}$, i.e.~a parabolic subgroup containing $T_{O_F}$ but not necessarily $B_{O_F}$. Let $N$ be its unipotent radical and let $M$ be the Levi factor containing $T_{O_F}$. Let $\overline{N}$ be the unipotent radical of the opposite parabolic. Let $\Ical_M = \Ical \cap M(O_L)$ and similarly $\Ical_N = \Ical \cap N(O_L)$ and $\Ical_{\overline{N}} = \Ical \cap \overline{N}(O_L)$. Then an element $x \in \widetilde{W}$ is called \emph{$P$-fundamental} if $\sigma({\dot x}\Ical_M{\dot x}^{-1}) = \Ical_M$, $\sigma({\dot x}\Ical_N{\dot x}^{-1}) \subseteq \Ical_N$, and $\sigma({\dot x}\Ical_{\overline N}{\dot x}^{-1})\supseteq \Ical_{\overline N}$.
\item
We call $x \in \Wtilde$ \emph{fundamental} if it is $P$-fundamental for some finite unramified extension $F$ and some semistandard parabolic subgroup $P$ of $G_{O_F}$.
\item
We call a class $c \in C(G)$ \emph{fundamental} if there exists a fundamental element $x \in \Wtilde$ and a representative $\dot x$ such that $c = \dbrack{\dot x}$. If $c \in C(G,\mu)$ is fundamental, we also call the corresponding isomorphism class of $p$-divisible groups with $\Dscr$-structure \emph{fundamental}.
\end{assertionlist}
\end{definition}

This definition generalizes the notion of $P$-fundamental elements in \cite{GHKR2} from split groups to unramified groups, see also \cite{trunc1}, Definition 6.1. In \cite{GHKR2}, fundamental elements in $\widetilde{W}$ are defined by a condition which is a priori weaker than ours (i.e.~as elements for which the first assertion of Remark \ref{remFundAlc} holds). We did not consider the question whether elements in $\widetilde{W}$ which are fundamental in their sense are also fundamental in our sense.

\begin{remark}\label{remFundAlc}
For a fundamental element $x \in \widetilde{W}$ and in the equi-characteristic case, G\"{o}rtz, Haines, Kottwitz, and Reuman show in \cite{GHKR2}~Proposition~6.3.1 that every element of $\Ical\dot x\Ical$ is $\Ical$-$\sigma$-conjugate to $\dot x$. The same argument still shows this property in our more general situation.

This implies in particular that every element of $K_1\dot xK_1$ (where $K_1 := \Ker(K\to G(k)) \subset \Ical$) is $K$-$\sigma$-conjugate to $\dot x$ (where $K = G(W(k)) \supset \Ical$). Hence if $x \in \Wtilde$ is a fundamental element, then every representative $\dot x$ of $x$ is minimal. In particular we see that fundamental $p$-divisible groups with $\Dscr$-structure are minimal.
\end{remark}

\subsection{The split case}
If $G$ is split, we use results of~\cite{GHKR2} to show the following proposition:

\begin{proposition}\label{FundAlcSplit}
Assume that $G$ is split and that $\mu$ is minuscule. Then each element $b$ of $B(G)$ contains the representative of a fundamental element of $\Wtilde$. Any two such fundamental elements in $\Wtilde$ are conjugate under $W$. In particular, $b$ contains a unique fundamental element $c_b$ of $C(G)$. If $b \in B(G,\mu)$, then $c_b \in C(G,\mu)$.
\end{proposition}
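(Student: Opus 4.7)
The plan is to combine Kottwitz's classification of $B(G)$ with the theory of fundamental alcoves from \cite{GHKR2}, exploiting that $G$ being split means all relevant Levi subgroups are standard and defined over $\ZZ_p$.

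For existence, given $b \in B(G)$ with dominant Newton point $\nu = \nu(b)$, I would let $M = M_\nu \subseteq G$ denote its centralizer, a standard Levi subgroup. The class $b$ is then the image of a unique basic class $b_M \in B(M)$ under the natural map $B(M) \to B(G)$. Since $M$ is split, $\Omega_M \cong \pi_1(M)$, and basic elements of $B(M)$ are parameterized by $\pi_1(M)$; hence $b_M$ has a unique representative $\tau \in \Omega_M \subseteq \Wtilde$. One verifies $\tau$ is $P$-fundamental for the standard parabolic $P = MN$ whose unipotent radical $N$ has roots positive on $\nu$: the equality $\sigma(\dot\tau\, \Ical_M\, \dot\tau^{-1}) = \Ical_M$ is automatic from $\tau \in \Omega_M$, and the inclusions for $\Ical_N$ and $\Ical_{\overline{N}}$ follow from the positivity of $\nu$ on the roots of $N$.

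For uniqueness up to $W$-conjugacy, suppose $x \in \Wtilde$ is $P'$-fundamental for some semistandard $P' = M'N'$ with $\dot x \in b$. The fundamentality conditions force $x \in \Omega_{M'}$, and its Newton point (equal to a $W$-translate of $\nu$) must be central in $M'$; together with the one-sided conditions on $\Ical_{N'}$ and $\Ical_{\overline{N'}}$, this determines $(M',N')$ up to $W$-conjugacy. Within any fixed such $M'$, the element $x \in \Omega_{M'}$ is uniquely determined by $b$. Hence fundamental representatives of $b$ form a single $W$-conjugacy class in $\Wtilde$. Since any such $W$-conjugation lifts to conjugation by an element of $N_T(W(k)) \subseteq K$, the resulting $K$-$\sigma$-conjugacy classes in $G(L)$ coincide, giving a well-defined fundamental class $c_b \in C(G)$.

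The main obstacle is the final assertion that $c_b \in C(G,\mu)$ when $b \in B(G,\mu)$, which amounts to showing $\dot\tau \in K\mu(p)K$, i.e., that the dominant Cartan invariant $\lambda_\tau$ of $\dot\tau$ equals $\mu$. Automatically $\kappa_G(\lambda_\tau) = \kappa_G(\dot\tau) = \kappa_G(b) = \kappa_G(\mu)$. For minuscule $\mu$, the only dominant coweight $\lambda$ with $\lambda \leq \mu$ and $\kappa_G(\lambda) = \kappa_G(\mu)$ is $\mu$ itself, since then $\mu - \lambda$ lies in the coroot lattice and is a non-negative sum of simple coroots, while all weights of the minuscule representation $V_\mu$ form a single $W$-orbit. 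Hence it suffices to show $\lambda_\tau \leq \mu$, which I would deduce by combining the explicit description of $\tau$ as a length-zero element of $\Wtilde_M$ with Mazur's inequality applied to $b$ and the assumption $\nu(b) \leq \bar\mu$, verifying that $\lambda_\tau$ is the minimal dominant coweight realizing the prescribed $\kappa_G$-image and Newton point $\nu$.
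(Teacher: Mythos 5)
Your proposal follows the paper's overall outline (existence via the GHKR2 theory of fundamental alcoves, uniqueness up to $W$-conjugacy, and $c_b \in C(G,\mu)$ via minusculeness), but where the paper simply cites \cite{GHKR2}~Corollary~13.2.4 and \cite{trunc1}~Lemma~5.3, 6.11, Proposition~5.5, you attempt reconstructions that have gaps. In the existence step, you take $\tau \in \Omega_M$ superbasic and assert it is $P$-fundamental for the standard $P = MN$ because $\nu$ is positive on the roots of $N$; this is not enough, since positivity of $\nu$ only controls the \emph{averaged} pairing of the translation part $\lambda$ of $\tau$ with roots of $N$, and individual pairings $\langle\alpha,\lambda\rangle$ can be negative. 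Concretely, for $G = GL_7$, $M = GL_2 \times GL_5$ with slopes $1/2$ and $2/5$, the superbasic $\tau$ has translation part $(1,0,1,0,0,0,1)$ and the $N$-root $\alpha = e_2 - e_3$ pairs to $-1$, so $\dot\tau\,\Ical_N\,\dot\tau^{-1}\not\subseteq\Ical_N$. One must first $W$-conjugate $\tau$ (equivalently, replace $\Ical$ by a different Iwahori inside $K$); this is exactly the extra work the paper does via the construction of $\tilde\Ical$ in its proof of the more general Theorem~\ref{thmexfundalc}, and what \cite{GHKR2}~\S13 does in the split case.

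The second gap is the step you yourself flag as the main obstacle. Reducing to $\lambda_\tau \leq \mu$ and then using minusculeness of $\mu$ together with $\kappa_G(\lambda_\tau) = \kappa_G(\mu)$ to force $\lambda_\tau = \mu$ is correct. But the claim that $\lambda_\tau$ is the minimal dominant coweight realizing the prescribed $\kappa_G$-image and Newton point is merely asserted (``I would deduce\ldots''), not proved: Mazur's inequality gives $\nu(b) \leq \lambda_\tau$, and $b \in B(G,\mu)$ gives $\nu(b) \leq \mu$, but neither yields $\lambda_\tau \leq \mu$. The paper supplies exactly this via \cite{trunc1}~Proposition~5.5: passing to the equi-characteristic model, the fundamental representative $\dot x_b$ lies in the Zariski closure of $K\mu(z)K$ in the loop group, which directly gives $\lambda_\tau \leq \mu$.
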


\begin{proof}
By \cite{GHKR2}~Corollary~13.2.4 each element $b$ of $B(G)$ contains a fundamental element $x_b$, and \cite{trunc1}, Lemma 5.3 and 6.11 show that any two fundamental elements in $b$ are conjugate under $W$. To be precise, \cite{GHKR2} consider the equi-characteristic case, but the same proof shows the analogous result in our situation. It remains to show the last assertion. Thus it is enough to show that for each $b \in B(G,\mu)$ there exists a fundamental element $x \in W\mu W$ with $\dot x \in b$. We have already seen that there is a fundamental element $x_b$ in $b$. Let $\mu' \in X_*(T)$ be dominant with $x_b\in W\mu'W$. We want to show that $\mu' = \mu$. As $\mu$ is minuscule, it is enough to show that $\mu'\leq\mu$. Note that this is a statement purely in terms of the affine Weyl group and of the given element of $B(G,\mu) \hookrightarrow X_*(T)_{\mathbb{Q}}\times \pi_1(G)$. In particular, we can prove this assertion using equi-characteristic methods, i.e.~replacing $W(k)$ by $k\dlbrack z\drbrack$. As $b\in B(G,\mu)$ there is an element $g\in G(k((z)))$ with $g\in G(k\dlbrack z\drbrack)\mu(z)G(k\dlbrack z\drbrack)$ and in the $\sigma$-conjugacy class in $G(k((z)))$ associated with $b$. By \cite{trunc1}, Proposition 5.5 $\dot x_b$ lies in the closure of the double coset $G(k\dlbrack z\drbrack)\mu(z)G(k\dlbrack z\drbrack)$. In particular, $\mu'\leq\mu$.
\end{proof}

The preceding result and its proof are valid for an arbitrary split reductive group scheme. From now on we assume again that $G$ is associated with a PEL-Shimura-datum.

\begin{theorem}\label{MinimalNewtonSplit}
Assume that $G$ is split. Then for every $b \in B(G,\mu)$ there exists a unique Ekedahl-Oort stratum $\Ascr^{w(b)}_0$ that corresponds to (a representative of) a fundamental element and such that $\Ascr_0^{w(b)} \subseteq \Ncal_b$.
\end{theorem}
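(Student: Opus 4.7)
The plan is to package the preceding results into a direct construction of $w(b)$. Since the PEL cocharacter $\mu$ is automatically minuscule (the Hodge structure of type $(-1,0)+(0,-1)$ forces $V_\CC$ to carry only weights $0$ and $1$ under $\mu$) and $G$ is split by hypothesis, Proposition~\ref{FundAlcSplit} applies: for each $b \in B(G,\mu)$ it supplies a unique fundamental class $c_b \in C(G,\mu)$ whose image in $B(G)$ is $b$. I would then simply define
\[
w(b) \;:=\; {\rm tc}(c_b) \;\in\; \leftexp{J}{W},
\]
where ${\rm tc}\colon C(G,\mu) \to \leftexp{J}{W}$ is the truncation map of~\eqref{Truncation}.

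For the containment $\Ascr_0^{w(b)} \subseteq \Ncal_b$, first note that $\Ascr_0^{w(b)}$ is non-empty by Theorem~\ref{IntroEO1}\,(1) (itself a consequence of the flatness of $\zeta$ proved in Theorem~\ref{zetaflat}). Let $\APlus$ be any $\kgbar$-valued point of $\Ascr_0^{w(b)}$ and let $c \in C_{\kgbar}(G,\mu)$ denote the class of its associated $p$-divisible group with $\Dscr$-structure. By Example~\ref{DieudonneZip} and the definition of ${\rm tc}$, we have ${\rm tc}(c) = w(b) = {\rm tc}(c_b)$. Now Remark~\ref{remFundAlc} tells us that the fundamental class $c_b$ is minimal in the sense of Definition~\ref{defminimal}; in particular, ${\rm tc}^{-1}({\rm tc}(c_b)) = \{c_b\}$. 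Therefore $c = c_b$, and its image in $B(G)$ is $b$, so $\APlus \in \Ncal_b(\kgbar)$.

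For uniqueness, suppose $\Ascr_0^w$ is an Ekedahl-Oort stratum contained in $\Ncal_b$ whose index $w$ corresponds to (a representative of) a fundamental element, that is, $w = {\rm tc}(\dbrack{\dot x})$ for some fundamental $x \in \Wtilde$. As in the previous paragraph, the minimality of $\dbrack{\dot x}$ (Remark~\ref{remFundAlc}) implies that the class of the $p$-divisible group with $\Dscr$-structure at every $\kgbar$-point of $\Ascr_0^w$ equals $\dbrack{\dot x}$, and the containment $\Ascr_0^w \subseteq \Ncal_b$ forces its image in $B(G)$ to be $b$. By the uniqueness clause of Proposition~\ref{FundAlcSplit}, $\dbrack{\dot x} = c_b$, and hence $w = {\rm tc}(c_b) = w(b)$.

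There is no real obstacle here beyond assembling the three inputs: non-emptiness of all Ekedahl-Oort strata (Theorem~\ref{IntroEO1}\,(1)), the existence-and-uniqueness of fundamental classes over each $b \in B(G,\mu)$ (Proposition~\ref{FundAlcSplit}, which is where splitness of $G$ is used essentially through~\cite{GHKR2}), and the fact that fundamental implies minimal, i.e.\ that the fiber of ${\rm tc}$ over the truncation of a fundamental class is a singleton (Remark~\ref{remFundAlc}). The only point requiring care is to interpret ``corresponds to a fundamental element'' as a condition purely on the index $w \in \leftexp{J}{W}$ via the truncation map, which makes the uniqueness argument a formal consequence of the injectivity supplied by minimality.
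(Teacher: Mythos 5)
Your proof is correct and follows essentially the same route as the paper: define $w(b)$ via the truncation of the unique fundamental class $c_b$ over $b$ supplied by Proposition~\ref{FundAlcSplit}, then use minimality (Remark~\ref{remFundAlc}) to force the $p$-divisible group class at every point of the stratum to equal $c_b$, giving the containment and the uniqueness. The only difference is cosmetic: you explicitly invoke non-emptiness (Theorem~\ref{thmEOnonempty}) to anchor the argument, whereas the paper's containment argument works even for a priori possibly empty strata since minimality pins down the class at every point; since Theorem~\ref{thmEOnonempty} depends only on Proposition~\ref{remminbasic} and not on Theorem~\ref{MinimalNewtonSplit}, this creates no circularity.
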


By Remark~\ref{remFundAlc}, $\Ascr^{w(b)}_0$ is a minimal Ekedahl-Oort stratum.

\begin{proof}
By Proposition~\ref{FundAlcSplit} there exists a fundamental element $x \in \Wtilde$ such that $\dbrack{\dot x} \subseteq b$ and $\dbrack{\dot x} \in C(G,\mu)$. Moreover $x$ is unique up to $W$-conjugation (which is the same as $W$-$\sigma$-conjugation because $G$ is split) in $\Wtilde$. This shows that $\dbrack{\dot x}$ depends only on $b$. We call its image under the truncation map~\eqref{Truncation} $w(b)$. By definition, the corresponding Ekedahl-Oort stratum $\Ascr^{w(b)}_0$ meets the Newton stratum $\Ncal_b$. But by Remark~\ref{remFundAlc} $\Ascr^{w(b)}_0$ is minimal and thus is contained in $\Ncal_b$.
\end{proof}

By Lemma~\ref{lemFundAlc} we obtain

\begin{corollary}
Let $\Ascr_0$ be a moduli space of abelian varieties with $\Dscr$-structure such that the associated group $G$ is split. Let $\APlus$ be a $k$-valued point of $\Ascr_0$. Then there exists a minimal $k$-valued point $\APlusOne$ which is isogenous to $\APlus$.
\end{corollary}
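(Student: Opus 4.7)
The plan is to deduce this corollary directly from the preceding machinery with essentially no new work. By Lemma~\ref{lemFundAlc}, applied to $\APlus$ and its image $b \in B(G,\mu)$, it suffices to exhibit a minimal element in the intersection $K\mu(p)K \cap b \subseteq G(L)$. Producing such an element is exactly what Proposition~\ref{FundAlcSplit} together with Remark~\ref{remFundAlc} accomplish.

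First I would invoke Proposition~\ref{FundAlcSplit}: since $G$ is split and $\mu$ is minuscule (as always for PEL Shimura data), the $\sigma$-conjugacy class $b \in B(G,\mu)$ contains a unique fundamental $K$-$\sigma$-conjugacy class $c_b \in C(G,\mu)$. By the very definition of $C(G,\mu)$, a representative $\dot x$ of $c_b$ lies in $K\mu(p)K$, and by construction $\dot x \in b$. Second, by Remark~\ref{remFundAlc}, every representative of a fundamental element of $\Wtilde$ is minimal in the sense of Definition~\ref{defminimal}. Hence $\dot x$ is a minimal element of $K\mu(p)K \cap b$.

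Having verified the hypothesis of Lemma~\ref{lemFundAlc}, we conclude the existence of a minimal $k$-valued point $\APlusOne$ of $\Ascr_0$ which is isogenous to $\APlus$ (the isogeny being the $O_B$-linear isogeny between the associated $p$-divisible groups with $\Dscr$-structure constructed in the proof of that lemma, and lifted to abelian varieties via Lemma~\ref{IsogAV}). There is no real obstacle here: all the work lies in Proposition~\ref{FundAlcSplit} (which uses the classification of fundamental alcove elements in split groups from \cite{GHKR2}) and in Lemma~\ref{lemFundAlc} (whose proof uses the passage from isogenies of $p$-divisible groups with $\Dscr$-structure to isogenies of abelian varieties with $\Dscr$-structure). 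The corollary itself is only the synthesis of these two statements with Remark~\ref{remFundAlc}.
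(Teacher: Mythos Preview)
Your proof is correct and matches the paper's approach exactly: the paper simply writes ``By Lemma~\ref{lemFundAlc} we obtain'' as its entire proof, and you have spelled out in detail precisely the verification of condition~(\ref{eqstar}) via Proposition~\ref{FundAlcSplit} and Remark~\ref{remFundAlc} that this one-line reference is implicitly invoking.
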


\begin{remark}\label{exminpdg}
We compare fundamental elements for $G = GL_h$ and minimal $p$-divisible groups in the sense of Oort (\cite{Oort2}) (for short we will call them Oort-minimal). Let us first recall the definition of Oort-minimal $p$-divisible groups over an algebraically closed field $k$ of positive characteristic. By definition there is a unique isomorphism class of Oort-minimal $p$-divisible groups in each isogeny class of $p$-divisible groups over $k$. It is given as follows. Let $\mathbf N$ be the isocrystal corresponding to the isogeny class. If $X$ is minimal in the given class, its Dieudonn\'e module is isomorphic to a Dieudonn\'e module of the following form. There is a $\QQ_p$-rational decomposition $\mathbf N = \bigoplus_{i=1}^l \mathbf N_i$ of $\mathbf N$ into simple isocrystals $\mathbf N_i$ such that $\mathbf M = \bigoplus_{i=1}^l \mathbf M\cap \mathbf N_i$ and such that $\mathbf M\cap \mathbf N_i$ has a basis $e^i_1,\dotsc,e^i_{h_i}$ with $F(e^i_j)=e^i_{j+n_i}$. Here $\lambda_i=n_i/h_i$ with $(n_i,h_i)=1$ is the slope of $\mathbf N_i$ and we use the notation $e^i_{j+h_i}=pe^i_j$. 

Let now $f^i_j = e^i_{h_i+1-j}$. Then $f^1_1,\dotsc,f^1_{h_1},f^2_1,\dotsc$ is a basis of $\mathbf N$. Let $h := \dim \mathbf N$. One easily checks that if we write $F=b\sigma$ for $b\in GL_{h}(L)$ with respect to the given basis, then $b$ is contained in the Levi subgroup $M$ given by the decomposition $\mathbf N=\bigoplus_{i=1}^l\mathbf N_i$. We fix the diagonal torus $T$ in $G = GL_h$ and the Borel subgroup of upper triangular matrices containing it. If $\mu$ denotes the $M$-dominant Hodge polygon of $b$ (with respect to these choices), then $\mu\in\{0,1\}^h \subset \ZZ^h = X_*(T)$ is minuscule and $b$ satisfies $b\Ical_Mb^{-1}=\Ical_M$. By \cite{trunc1}, Lemma 6.11 this implies that the Dieudonn\'e module also has a trivialization $(\mathbf M,F) \cong (W(k)^n,b'\sigma)$ such that $b' \in G(L)$ is a representative of a fundamental element in $\widetilde W$. Thus an Oort-minimal $p$-divisible group of height $h$ (or its corresponding class in $C(G)$) corresponds to a representative of a fundamental element.

The same argument works for $G = GSp_{2g}$ by working with isocrystals where simple isocystals of slope $\lambda$ occur with the same multiplicity as isocrystals of slope $1-\lambda$.
\end{remark}

Thus we see that Oort-minimal $p$-divisible groups (without any additional structure) are fundamental, and fundamental $p$-divisible groups are minimal by Remark~\ref{remFundAlc}. We obtain a new proof of the main result of~\cite{Oort2}:

\begin{corollary}\label{OortMinimalMinimal}
Oort-minimal $p$-divisible groups are minimal in the sense of Definition~\ref{defminimal}.
\end{corollary}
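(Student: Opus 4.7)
The plan is to deduce the corollary by composing two implications: an Oort-minimal $p$-divisible group corresponds to (a representative of) a fundamental element in $\widetilde W$, and any fundamental element gives rise to a minimal class in $C(G)$ in the sense of Definition~\ref{defminimal}. Both implications are essentially contained in the material that immediately precedes the statement, so the role of the proof is to assemble them cleanly.

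First, I would recall the description in Remark~\ref{exminpdg}: given an Oort-minimal $p$-divisible group $X$ over an algebraically closed field $k$, one fixes the $\QQ_p$-rational decomposition of its isocrystal into simple summands $\mathbf N_i$ of slope $\lambda_i = n_i/h_i$, chooses the standard basis $e^i_j$ of the Dieudonn\'e module $\mathbf M$, and then passes to the reordered basis $f^i_j = e^i_{h_i+1-j}$. With respect to $(f^i_j)$ the Frobenius is written $F = b\sigma$ with $b$ lying in the standard Levi $M$ of $G = GL_h$ corresponding to the decomposition $(h_i)_i$, and $b$ normalizes $\Ical_M$. Invoking \cite{trunc1}, Lemma~6.11 as recorded there, we may further change the trivialization to get $(\mathbf M,F) \cong (W(k)^h, \dot x \sigma)$ for a representative $\dot x$ of a fundamental element $x \in \widetilde W$ (in our Definition~\ref{defpfund}, with $P$ the standard parabolic having $M$ as Levi). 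Hence the $K$-$\sigma$-conjugacy class of $b$ is fundamental in the sense of Definition~\ref{defpfund}(3).

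Second, I would invoke Remark~\ref{remFundAlc}: for any fundamental $x\in \widetilde W$ and any representative $\dot x$, every element of $K_1 \dot x K_1$ is $K$-$\sigma$-conjugate to $\dot x$ (this is the application of \cite{GHKR2}, Proposition~6.3.1 recalled there). By the equivalent conditions of Remark~\ref{ExplainMinimal}, this is precisely the statement that $\dbrack{\dot x} \in C_k(G)$ is minimal in the sense of Definition~\ref{defminimal}(1). Therefore the class associated with $X$ is minimal, i.e.\ $X$ is minimal in our sense. The case $G = GSp_{2g}$ (principally polarized Oort-minimal $p$-divisible groups) is handled identically, using the symmetric isocrystal decomposition indicated at the end of Remark~\ref{exminpdg} and the corresponding standard Levi in $GSp_{2g}$.

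There is essentially no obstacle: both non-trivial inputs (the normal form of an Oort-minimal Dieudonn\'e module, and the $\sigma$-conjugacy rigidity of fundamental elements under $K_1$-multiplication) have already been isolated in Remarks~\ref{exminpdg} and~\ref{remFundAlc}. The only point that merits a one-line verification is that the fundamental element produced from $X$ actually lies in $W \mu W$ with $\mu$ the Hodge cocharacter of $X$, so that the class lies in $C(G,\mu)$ and not merely in $C(G)$; this is immediate because the change of basis $e^i_j \mapsto f^i_j$ and the subsequent conjugation are performed inside $GL_h(W(k))$ (respectively $GSp_{2g}(W(k))$), so the $K$-double coset of the matrix of $F$, and in particular its Hodge polygon, is preserved.
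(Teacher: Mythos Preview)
Your proposal is correct and follows exactly the paper's own argument: the corollary is deduced immediately from Remark~\ref{exminpdg} (Oort-minimal $\Rightarrow$ fundamental) together with Remark~\ref{remFundAlc} (fundamental $\Rightarrow$ minimal), which the paper records in the single sentence preceding the statement. Your additional verification that the fundamental element lies in $W\mu W$ is not strictly needed for the corollary as stated, since minimality in Definition~\ref{defminimal}(1) is formulated for classes in $C(G)$ via condition~(i) of Remark~\ref{ExplainMinimal}; but it is harmless and in any case automatic, as you note.
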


If $G$ is split (i.e. isomorphic to a product of reductive groups of the form $GL_n$ or $\GSp_{2g}$), then -- as explained above -- Oort has shown in a series of papers that each Newton stratum contains a unique minimal Ekedahl-Oort stratum. This then shows that the notion of Oort-minimal, fundamental and minimal $p$-divisible groups coincide in this case.

In general the existence of minimal Ekedahl-Oort stratum within a given Newton stratum is not clear. Lemma~\ref{lemFundAlc} shows that for $b \in B(G,\mu)$ the corresponding Newton stratum $\Ncal_b$ contains a minimal Ekedahl-Oort stratum if the following group-theoretical condition holds.
\begin{equation}\label{ExistFundamental}
\text{There is a fundamental element in $W\mu W$ whose image in $B(G)$ is equal to $b$.}
\end{equation}
\begin{example}\label{exhb}
The uniqueness of minimal Ekedahl-Oort strata within a given Newton stratum does not hold in general. One example of this phenomenon is the ``unramified inert Hilbert-Blumenthal case'', i.e., the case where we have an exact sequence
\[
1 \to \Res_{K/\QQ_p}SL_{2,K} \to G_{\QQ_p} \to \GG_m \to 1,
\]
where $K$ is an unramified extension of $\QQ_p$ of some degree $g$. We give an explicit example that for $g \geq 6$ there exist minimal Ekedahl-Oort strata which are not in the same Galois orbit, but which are in the same Newton stratum. We consider $G_{\QQ_p}$ as a subgroup of $\Res_{K/\QQ_p}GL_{2,K}$. Let $T$ and $B$ be the maximal torus and the Borel subgroup of $G$ consisting of diagonal matrices and of upper triangular matrices, respectively. We identify $\Delta=\Gal(K/\QQ_p)$ with $\ZZ/g\ZZ$. The Weyl group $W$ is isomorphic to $S_2^{\Delta}$ where $S_2=\{1,s\}$ is the symmetric group of two elements. For $\tau\in\Delta$ we denote the non-trivial element in the $\tau$th factor by $s_\tau$. We have
\[
X_*(T)\cong \set{((g_{\tau,1},g_{\tau,2}))_{\tau}\in(\mathbb{Z}^2)^{\Delta}}{\text{$\exists\,c\in\mathbb{Z}: g_{\tau,1}+g_{\tau,2}=c$ for all $\tau$}}.
\]
We have $\mu=((1,0),\ldots,(1,0))$. We consider two particular elements $\phi_1=(1,0)$ and $\phi_2=(0,1)s$ in $\widetilde W_{\GL_2}\cong S_2\ltimes \mathbb{Z}^2$. Let $x,x'\in \widetilde W$ with decomposition $x=\prod x_\tau$ and $x'=\prod x'_\tau$. Let $x_{5}=x_6=x'_3=x'_6=\phi_1$ and all other $x_\tau$ and $x'_\tau$ equal to $\phi_2$ (viewed as an element of the corresponding component of $\widetilde W$). Then $x$ and $x'$ are not in the same Galois orbit in $\widetilde W$. We have $x=y^{-1}x'\sigma(y)$ where $y=s_3s_4(-1,1)_4$, and thus $x,x'$ are in the same Newton stratum. 

We define two parabolic subgroups $P,P'$ of $G_{\QQ_p}$ as intersections of $G_{\QQ_p}$ with the following parabolic subgroups $\tilde P,\tilde P'$ of $\Res_{K/\QQ_p}GL_{2,K}$. Let $\tilde P_2,\tilde P_4, \tilde P'_2,\tilde P'_5$ be the subgroup of lower triangular matrices, and let all other $\tilde P_\tau$ and $\tilde P'_{\tau}$ be the subgroups of $\GL_2$ of upper triangular matrices. Let $\tilde P_K=\prod_\tau \tilde P_\tau$ and similarly for $\tilde P'$. Then an explicit calculation shows that $x$ is $P$-fundamental and $x'$ is $P'$-fundamental. Thus they induce two minimal Ekedahl-Oort strata contained in the same Newton stratum.
\end{example}

\subsection{The basic case}
Although we do not know whether in general an isogeny class of $p$-divisible groups with $\Dscr$-structure contains a minimal or even a fundamental $p$-divisible group with $\Dscr$-structure, we will now explain that this is true in the $\mu$-ordinary and in the basic case.

\begin{remark}\label{MuOrdEONewton}
Moonen (\cite{Mo_SerreTate}, Theorem in Section~0.3) has shown that the $\mu$-ordinary Newton stratum is equal to the $\mu$-ordinary Ekedahl-Oort stratum and that this is minimal.
\end{remark}

\begin{proposition}\label{FundAlcBasic}
All basic classes $b \in B(G)$ contain a fundamental element of $C(G)$. If $b \in B(G,\mu)$ is basic, then it contains a fundamental element of $C(G,\mu)$.
\end{proposition}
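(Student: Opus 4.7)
The plan is to choose the semistandard parabolic $P = G$ (so $M = G$ and $N = \overline{N} = \{1\}$) in Definition~\ref{defpfund}(a). With this choice $\Ical_M = \Ical$ and $\Ical_N = \Ical_{\overline N} = \{1\}$, so the $P$-fundamentality of $x \in \Wtilde$ collapses to the single condition $\sigma(\dot x\,\Ical\,\dot x^{-1}) = \Ical$. Since $\Ical$ is $\sigma$-stable, this is equivalent to $\dot x \in N_{G(L)}(\Ical)$, i.e.\ $x \in \Omega$. It therefore suffices to produce, for every basic $b \in B(G)$, an element $\omega \in \Omega$ with $\dot\omega \in b$, and to arrange $\omega \in W t_\mu W$ (so that $\dot\omega \in K\mu(p)K$) when $b \in B(G,\mu)$.

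For the first statement I will invoke Kottwitz's classification: $\kappa_G$ restricts to a bijection between basic classes in $B(G)$ and $\pi_1(G)_\Gamma$, and the isomorphism $\Omega \cong \pi_1(G)$ identifies $\kappa_G|_{\Omega}$ with the natural surjection $\pi_1(G) \twoheadrightarrow \pi_1(G)_\Gamma$. The preliminary step is to verify that $\dbrack{\dot\omega}$ is always basic for $\omega \in \Omega$. I expect to do this by writing $\omega = t_\lambda w$ with $w \in W$ and using that $\omega$ fixes a point of $\overline{\mathbf a}$ (the barycenter, obtained by averaging over the finite group $\Omega$), which forces $\lambda \in (1-w)(X_*(T) \otimes \QQ)$ and hence makes the $w$-average $|w|^{-1}\sum_i w^i \lambda$ vanish. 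Since this $w$-average computes the Newton point of $\dot\omega$ up to $\sigma$-twist, we conclude that $\dbrack{\dot\omega}$ is basic. Lifting $\kappa(b) \in \pi_1(G)_\Gamma$ to any $\omega \in \Omega$ then yields $\dbrack{\dot\omega} = b$, since basic classes are determined by their $\kappa$-invariant.

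For the second statement, let $b \in B(G,\mu)$ be basic, so $\kappa(b) = \bar\mu$. The key input I need is that for minuscule $\mu$ the intersection $\Omega \cap W t_\mu W$ is nonempty: explicitly, there exists $\omega = t_{\mu'} w$ with $\mu' \in W\cdot \mu$ and $w \in W$ such that $\omega$ preserves $\overline{\mathbf a}$. Such an $\omega$ can be produced via a length computation, matching $\ell(t_\mu) = \langle \mu, 2\rho\rangle$ (valid for $\mu$ dominant) with the length of a specific Weyl element $w_\mu \in W$ whose product with $t_\mu$ ``folds back'' into $\Omega$. Any such $\omega$ automatically maps to $\bar\mu$ in $\pi_1(G)$, hence to $\kappa(b)$ in $\pi_1(G)_\Gamma$, so the first part yields $\dbrack{\dot\omega} = b$ with $\dot\omega \in K\mu(p)K$.

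I expect the main obstacle to be the combinatorial nonemptiness of $\Omega \cap W t_\mu W$ for minuscule $\mu$ in our (possibly non-split but quasi-split) setting, together with verifying that the chosen representative is $\sigma$-compatible so that it really lands in $b \in B(G,\mu)$ rather than a Galois conjugate class. The rest consists of direct applications of Kottwitz's classification of $B(G)$ and the semidirect decomposition $\Wtilde = W_{\rm aff} \rtimes \Omega$.
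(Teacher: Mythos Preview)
Your approach is exactly the paper's: take $P=G$ so that $P$-fundamental reduces to $x\in\Omega$, lift $\kappa(b)$ to some $\omega\in\Omega$, and use that basic classes are determined by their Kottwitz invariant. The paper is terser (it asserts without argument that $[\dot\omega]$ is basic and that the unique $\omega\in\Omega$ with image $\bar\mu\in\pi_1(G)$ lies in $K\mu(p)K$), while you sketch the barycenter and length justifications; your worry about $\sigma$-compatibility is unnecessary, since once $\omega\in\Omega\cap Wt_\mu W$ you have $[\dot\omega]$ basic with $\kappa([\dot\omega])=\bar\mu=\kappa(b)$, hence $[\dot\omega]=b$, and for the explicit element you can simply take $w_0w_{0,\mu}t_\mu$, whose membership in $\Omega$ the paper verifies by a direct root-subgroup computation in the proof of Proposition~\ref{remminbasic}.
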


\begin{proof}
We use the notation from Subsection~\ref{AffWeyl}. Let $x \in \Omega$ be such that its image under the surjection $\Omega\rightarrow \pi_1(G)\rightarrow \pi_1(G)_{\Gamma}$ is equal to $\kappa(b)$. Then $[\dot x] \in B(G)$ is by definition basic and has the same image in $\pi_1(G)_{\Gamma}$ as $b$. Thus $[\dot x] = b$. Furthermore $x \in \Omega$ implies that $x$ is $P$-fundamental for $P=G$.

Let $b \in B(G,\mu)$ for some $\mu$ correspond to the isogeny class of a $p$-divisible group with $\Dscr$-structure. Then one can choose $x \in \Omega$ to be the unique element whose image in $\pi_1(G)$ is equal to the image of $\mu \in X_*(T)$ under the projection to $\pi_1(G)$. Then $\dot x \in K\mu(p)K$.
\end{proof}

Note that this a purely group-theoretic result and its proof works for any reductive group scheme $G$ over $\ZZ_p$. For basic Newton strata one has the following explicit description of a corresponding minimal Ekedahl-Oort stratum.

\begin{proposition}\label{remminbasic}
The Ekedahl-Oort stratum $\Ascr_0^1$ for $w=1$ is minimal and non-empty. If $b \in B(G,\mu)$ is the basic element, then $\Ascr_0^1\subseteq\mathcal{N}_b$. 
\end{proposition}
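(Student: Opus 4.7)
The plan is to exhibit a single class in $C(G,\mu)$ that is simultaneously basic, fundamental (hence minimal), and maps to $w=1$ under the truncation map ${\rm tc}\colon C(G,\mu) \to \leftexp{J}{W}$ of Remark~\ref{DZipToPDiv}. Once this is achieved, all three assertions follow at once: non-emptiness is immediate (it also follows from Theorem~\ref{IntroEO1}(1), itself a consequence of the surjectivity of $\zeta$ in Theorem~\ref{zetaflat}); minimality of $w=1$ is then the content of Definition~\ref{defminimal}(2); and the inclusion $\Ascr_0^1 \subseteq \Ncal_b$ will follow because minimality of the distinguished class means, by Remark~\ref{ExplainMinimal}, that ${\rm tc}^{-1}(1)$ consists of this single class alone, so every $\kgbar$-valued point of $\Ascr_0^1$ has attached $p$-divisible group with $\Dscr$-structure in the basic isogeny class.

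First I would apply Proposition~\ref{FundAlcBasic} to obtain a fundamental element $\tau \in \Omega$ with a representative $\dot\tau \in K\mu(p)K$ such that $\dbrack{\dot\tau}$ lies in the basic class $b \in B(G,\mu)$; by Remark~\ref{remFundAlc} the class $\dbrack{\dot\tau}$ is then automatically minimal in the sense of Definition~\ref{defminimal}(1). The remaining task is to verify ${\rm tc}(\dbrack{\dot\tau}) = 1$.

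To carry this out, I would compare with the canonical preimage provided by Remark~\ref{DZipToPDiv}, namely $\dbrack{\dot x \mu(p)} \in {\rm tc}^{-1}(1)$ with $x = w_0 w_{0,\bar\varphi(J)}$. In the split case, where $\bar\varphi = \id$ and the element $w_0 w_{0,J}\mu$ is the standard length-zero representative of the image of $\mu$ in $\Omega \cong \pi_1(G)$ (a well-known feature of minuscule coweights), one has $\tau = x\mu$ in $\widetilde W$ because $\tau$ and $x\mu$ share the same image in $\pi_1(G)$; hence $\dbrack{\dot\tau} = \dbrack{\dot x \mu(p)}$ and the identification is immediate. In the general quasi-split case I would instead compute the $\Dscr$-zip attached to the Dieudonn\'e module $(\Lambda \otimes W(k), F = \dot\tau\sigma)$ directly: writing $\Lambda = \Lambda_0 \oplus \Lambda_1$ for the weight decomposition under $\mu$, one finds $C = \Ker(F \bmod p) = \Lambda_1 \otimes k$ and $D = \Ker(V \bmod p) = \dot\tau_{\rm fin}\Lambda_0 \otimes k$, where $\tau_{\rm fin} \in W$ is the finite part of $\tau$. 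By Proposition~\ref{ReformDetCond} the stabilizer of $C$ is the standard parabolic $P_J$, and one checks that the stabilizer of $D$ is $\leftexp{\dot{\tau}_{\rm fin}}{P_K}$ for $K = \leftexp{w_0}{\bar\varphi(J)}$; comparison with~\eqref{DescribeXJOrbit} then places the resulting triple in the $G(\kgbar)$-orbit on $X_J$ labeled by $w=1$.

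The hard part will be this last identification in the non-split case. The element $\tau$ produced by Proposition~\ref{FundAlcBasic} is characterized only through its image in $\pi_1(G)$, while the labeling in~\eqref{DescribeXJOrbit} involves the $\bar\varphi$-twisted subset $K = \leftexp{w_0}{\bar\varphi(J)}$; reconciling these requires carefully tracking the $\bar\varphi$-action on $(W,I)$ and comparing $\dot\tau$ and $\dot x \mu(p)$ up to $K$-$\sigma$-conjugation, using that $\tau \in \Omega$ normalizes the Iwahori $\Ical$. Once this step is established, the minimality of $w=1$ and the containment $\Ascr_0^1 \subseteq \Ncal_b$ both follow as indicated in the first paragraph.
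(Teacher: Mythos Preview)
Your overall strategy---produce a single class in $C(G,\mu)$ that is basic, fundamental, and truncates to $w=1$---matches the paper's. But two points deserve attention.

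\medskip

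\textbf{Circularity in non-emptiness.} Your claim that non-emptiness of $\Ascr_0^1$ is ``immediate'' once the class $\dbrack{\dot\tau}$ is constructed is not justified: having a $p$-divisible group with $\Dscr$-structure of the right kind does not by itself produce a point of the moduli space $\Ascr_0$. Your parenthetical appeal to the surjectivity of $\zeta$ is circular, since that surjectivity (Theorem~\ref{thmEOnonempty}) is proved in the paper \emph{using} Proposition~\ref{remminbasic}. The paper instead invokes the independently known non-emptiness of the basic Newton stratum (\cite{Fargues}~3.1.8 or \cite{Ko_ShFin}~\S18), takes a $k$-point there, and then uses Lemma~\ref{IsogAV} to pass along an isogeny to a point whose $p$-divisible group is the minimal one.

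\medskip

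\textbf{The identification ${\rm tc}(\dbrack{\dot\tau})=1$.} Both your route and the paper's hinge on the same fact: the explicit element $w_0w_{0,\mu}\mu \in \widetilde W$ (which Remark~\ref{DZipToPDiv} tells you truncates to $w=1$) lies in $\Omega$. The paper proves this directly and uniformly by a root-subgroup computation, decomposing $\Ical$ into $T(O_L)$ and the $U_\alpha \cap \Ical$ and checking that conjugation by $w_0w_{0,\mu}\mu$ preserves each piece. Your approach instead starts from an abstract $\tau \in \Omega$ furnished by Proposition~\ref{FundAlcBasic} and tries to match it with the element corresponding to $w=1$. In the split case your ``well-known feature of minuscule coweights'' is exactly the fact the paper verifies; since $\Omega \cong \pi_1(G)$, once you know $w_0w_{0,J}\mu \in \Omega$ you get $\tau = w_0w_{0,J}\mu$ for free, so your argument there is fine but no shorter. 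In the non-split case, however, your proposed direct $\Dscr$-zip computation is not carried out---you yourself flag it as ``the hard part''---and the sketch you give (identifying $C$, $D$, and their stabilizers in terms of the unknown finite part $\tau_{\rm fin}$) does not obviously land in the orbit for $w=1$ without further work. The paper's root computation avoids this case distinction entirely: it is a statement about the root system and the minuscule cocharacter $\mu$, and goes through verbatim whether or not $G$ is split.
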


\begin{proof}
By \cite{Fargues}~3.1.8 or \cite{Ko_ShFin}~\S18, the basic Newton stratum is non-empty. Let $\APlus$ be a $k$-valued point of $\mathcal{N}_b$. Let $(X,\iota,\lambda)$ be a $p$-divisible group with $\Dscr$-structure whose associated $\Dscr$-zip is of type $w=1$. We want to show that $(X,\iota,\lambda)$ is minimal and isogenous to the $p$-divisible group with $\Dscr$-structure of $\APlus$. Indeed, then the non-emptiness of the stratum $\Ascr_0^1$ follows as in the proof of Lemma \ref{IsogAV} and the inclusion $\Ascr_0^1\subseteq\mathcal{N}_b$ follows from the definition of minimality. Thus by Remark~\ref{DZipToPDiv} and Remark \ref{remFundAlc} it is enough to show that $x: = w_0w_{0,\mu}\cdot\mu\in W\ltimes X_*(T)\cong\widetilde{W}$ is fundamental and that its image in $B(G,\mu)$ is basic. We show that $x\Ical x^{-1}=\Ical$, where $\Ical$ is the chosen Iwahori subgroup. Then $x$ is $P$-fundamental for $P=M=G$ and $x\in\Omega$ and hence its image in $B(G,\mu)$ is basic. To prove $x\Ical x^{-1}=\Ical$ we use the decomposition of $\Ical$ into $T\cap \Ical=T(O_L)$ and $U_{\alpha}\cap \Ical$ for all root subgroups $U_\alpha$. We have $x(T\cap \Ical)x^{-1}=T\cap \Ical$. Furthermore $$U_{\alpha}\cap \Ical=\begin{cases}
U_{\alpha}(O_L)&\text{if }\alpha>0\\
\{g\in U_{\alpha}(O_L)\mid g\equiv 1\pmod{p}\}&\text{if }\alpha<0.
\end{cases}$$ We have to show 
\begin{equation}\label{eqassert}
w_0w_{0,\mu}\mu(p)(U_{\alpha}(L)\cap \Ical)(w_0w_{0,\mu}\mu(p))^{-1}=w_0w_{0,\mu}\mu(p)U_{\alpha}(L)(w_0w_{0,\mu}\mu(p))^{-1}\cap \Ical
\end{equation}
for every $\alpha$. Let $P_{\mu}$ be the standard parabolic subgroup associated with $\mu$ and let $M_{\mu}$ and $N_{\mu}$ be its Levi factor containing $T$ and its unipotent radical. Let $\overline{N_{\mu}}$ be the unipotent radical of the opposite parabolic. For roots $\alpha$ of $T$ in $M_\mu$ conjugation by $\mu$ acts trivially on $U_{\alpha}$, and conjugation by $w_0w_{0,\mu}$ maps positive roots in $M_{\mu}$ to positive roots (not necessarily in $M_{\mu}$) and negative roots in $M_{\mu}$ to negative roots. Thus (\ref{eqassert}) holds for roots of $T$ in $M$. For roots $\alpha$ of $T$ in $N_{\mu}$ we have $\langle \mu,\alpha\rangle=1$, hence conjugation by $\mu$ maps $U_{\alpha}\cap \Ical= U_{\alpha}(O_L)$ to $\{g\in U_{\alpha}(O_L)\mid g\equiv 1\pmod{p}\}$. Furthermore $w_0w_{0,\mu}$ maps these roots to $\overline{N_{\mu}}$, hence to negative roots. Together with a similar consideration for the roots in $\overline{N_{\mu}}$ we obtain that $w_0w_{0,\mu}\mu(p)(U_{\alpha}\cap \Ical)(w_0w_{0,\mu}\mu(p))^{-1}=w_0w_{0,\mu}\mu(p)U_{\alpha}(w_0w_{0,\mu}\mu(p))^{-1}\cap \Ical$ for every $\alpha$. 
\end{proof}

\subsection{The general case}

In general we do not know whether an isogeny class of $p$-divisible groups with $\Dscr$-structure contains a fundamental $p$-divisible group with $\Dscr$-structure. However, we show in this subsection the slightly weaker statement that each isogeny class of $p$-divisible groups with $O_B$-action and polarization (but without requiring a determinant condition) contains a fundamental $p$-divisible group with this additional structure. For non-split groups, fundamental $p$-divisible groups in a given isogeny class are in general not unique, compare Example \ref{exhb}.

\begin{theorem}\label{thmexfundalc}
Let $G$ and $\mu$ be the reductive $\ZZ_p$-group and the minuscule cocharacter associated with a Shimura-PEL-datum and let $b\in B(G_{\QQ_p}, \mu)$. Then there exists an $x\in \widetilde W$ such that $x\in b$ is fundamental and such that $x\in W\mu'$ for some minuscule coweight $\mu'$.
\end{theorem}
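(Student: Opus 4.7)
The plan is to combine a Hodge--Newton reduction with the basic case already treated in Proposition~\ref{FundAlcBasic}. First, I would let $M \subseteq G_{\QQ_p}$ be the standard Levi subgroup centralizing the Newton point $\bar\nu(b) \in (X_*(T) \otimes \QQ)^\Gamma_{\mathrm{dom}}$. Since $\bar\nu(b)$ is $\Gamma$-invariant, $M$ is defined over $\QQ_p$, and the class $b$ lifts to a basic class $b_M \in B(M)$. By the standard Hodge--Newton decomposition (which applies because $M$ is precisely the centralizer of $\bar\nu(b)$), the class $b_M$ admits a representative in $K\mu(p)K \cap M(L)$; the Cartan decomposition inside $M$ then shows that $b_M \cap M(O_L)\mu_M(p)M(O_L) \neq \emptyset$ for some $W_M$-dominant cocharacter $\mu_M \in X_*(T)$.

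Next, I would establish that $\mu_M$ is a $W$-conjugate of $\mu$, hence minuscule. The Kottwitz--Rapoport comparison of Cartan invariants implies that the $W$-dominant representative $\mu_M^+$ of the $W$-orbit of $\mu_M$ satisfies $\mu_M^+ \leq \mu$ in the dominance order. For a minuscule dominant $\mu$, however, the only dominant $\lambda \in X_*(T)$ with $\lambda \leq \mu$ is $\mu$ itself: writing $\mu - \lambda = \sum_i n_i \alpha_i^\vee$ with $n_i \geq 0$, pairing with fundamental weights and using $\langle \omega_i, \mu \rangle \in \{0,1\}$ together with $\langle \omega_i, \lambda \rangle \geq 0$ forces $\lambda = \mu$. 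Hence $\mu_M^+ = \mu$ and $\mu_M \in W\mu$.

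Then I would apply Proposition~\ref{FundAlcBasic} to $M$ with the basic class $b_M$ and cocharacter $\mu_M$, producing an element $x \in \Omega_M \subseteq \widetilde W$ with $\dot x \in M(O_L)\mu_M(p)M(O_L) \cap b_M$ that is $M$-fundamental. Writing $x = t_{\mu_M} \cdot w_M$ with $w_M \in W_M \subseteq W$, the identity $t_{\mu_M} \cdot w_M = w_M \cdot t_{w_M^{-1}\mu_M}$ in $\widetilde W$ shows $x \in W\mu'$ for $\mu' := w_M^{-1}\mu_M \in W\mu$ minuscule. To upgrade $M$-fundamentality to $P$-fundamentality, I would take $P$ to be the semistandard parabolic of $G_{\QQ_p}$ with Levi $M$ whose unipotent radical $N$ consists of root subgroups $U_\alpha$ with $\langle \alpha, \bar\nu(b) \rangle > 0$. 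A root-by-root computation, using that conjugation by $\dot x$ sends $U_\alpha$ to $U_{w_M(\alpha)}$ with scaling factor $p^{\langle w_M(\alpha),\mu_M\rangle}$, together with the equality that the $\bar\varphi$-average of $\mu_M$ equals $\bar\nu(b)$, then yields the required inclusions $\sigma(\dot x \Ical_N \dot x^{-1}) \subseteq \Ical_N$ and $\sigma(\dot x \Ical_{\overline N} \dot x^{-1}) \supseteq \Ical_{\overline N}$.

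The main obstacle will be the parabolic step in the non-split cases (type (AU) and restrictions of scalars from unramified extensions of $\QQ_p$), where the Galois action on $X_*(T)$ and on the root data interacts nontrivially with $\mu_M$. Although the minusculeness conclusion $\mu_M \in W\mu$ holds uniformly, verifying the Iwahori containments for $P$-fundamentality requires careful bookkeeping along the classification of Remark~\ref{StructureG}. Example~\ref{exhb} already illustrates this subtlety: several inequivalent minuscule choices of $\mu'$ may correspond to distinct fundamental elements inside the same Newton stratum.
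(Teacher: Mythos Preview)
Your proposal contains a genuine gap at the final step, where you claim that the element $x \in \Omega_M$ obtained from Proposition~\ref{FundAlcBasic} is $P$-fundamental for the standard parabolic $P$ with Levi $M = M_b$ and unipotent radical $N$. The inclusion $\sigma(\dot x\,\Ical_N\,\dot x^{-1}) \subseteq \Ical_N$ would require $\langle\alpha,\mu_M\rangle \geq 0$ for every root $\alpha$ of $T$ in $N$, since $w_M \in W_M$ permutes such roots and all of them are $G$-positive. But $\mu_M$ is only $M$-dominant, not $G$-dominant, so this fails already in simple split examples: take $G = \GL_5$, $\mu = (1,1,1,0,0)$, and $b$ with Newton point $\nu = (2/3,2/3,2/3,1/2,1/2)$, so that $M_b = \GL_3 \times \GL_2$ and $\mu_M = (1,1,0;1,0)$; then the root $\alpha = e_3 - e_4$ lies in $N$ yet $\langle\alpha,\mu_M\rangle = -1$. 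Invoking that the $\bar\varphi$-average of $\mu_M$ equals $\bar\nu(b)$ cannot repair this: the Iwahori containment is a \emph{pointwise} condition on individual $\sigma$-twisted conjugates, not an averaged one.

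The paper's proof confronts exactly this difficulty and resolves it by a substantially different mechanism. Rather than working in the Newton centraliser $M_b$, it passes to the smaller Levi $M$ in which $b$ becomes \emph{superbasic}; the explicit shape of $M$ for PEL groups (a product of Weil restrictions of $\GL_{n_i}$) then permits a hands-on construction of $x$ with the sharp bounds $|\langle\alpha,\mu_x-\nu\rangle| < 2$, and crucially the same bounds for \emph{all partial $\sigma$-products} $\sigma(x)\sigma^2(x)\cdots\sigma^m(x)$ (property~(5)). These bounds are what drive the key Claim: one builds a non-standard Iwahori $\tilde\Ical \subseteq K$ (as the smallest one containing all $\phi_x^i(\Ical'_N)$ for $i < r$) with $\phi_x(\tilde\Ical \cap N) \subseteq \tilde\Ical \cap N$, and only \emph{after} conjugating $x$ by the element $y \in W$ carrying $\tilde\Ical$ back to $\Ical$ does one obtain a $P'$-fundamental element $x'$. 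Your sketch omits both the passage to a superbasic Levi (needed to guarantee property~(5)) and this Iwahori-modification-plus-conjugation step; without them the argument does not go through. A secondary issue is that your appeal to Hodge--Newton to produce a representative of $b_M$ inside $K\mu(p)K \cap M(L)$ is not justified in the generality you need, and your argument that a dominant $\lambda \leq \mu$ with $\mu$ minuscule forces $\lambda = \mu$ only shows $n_i \in \{0,1\}$, not $n_i = 0$; the conclusion is nonetheless correct, but for the representation-theoretic reason that the weights of a minuscule representation form a single Weyl orbit.
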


In particular, $x$ satisfies condition (\ref{eqstar}). The theorem and its proof are a refinement of a corresponding but slightly weaker statement for split groups, cf. \cite{trunc1}, Theorem 6.5.

For the proof of the theorem we may assume that we are in one of the cases (AL), (AU), or (C) of Remark~\ref{CasesOB}. As a preparation of the proof we construct an explicit representative of $b$. Let $\nu\in X_*(T)_{\mathbb{Q},\rm dom}$ be the dominant Newton point of $b$ and let $M_b$ be its centralizer. There is a standard parabolic subgroup of $G_{\QQ_p}$ whose Levi subgroup $M$ containing $T$ is minimal among those Levi subgroups of $G_{\QQ_p}$ that are defined over $\mathbb{Q}_p$ and contain an element $b'\in M(L)\cap b$ whose $M$-dominant Newton point is equal to $\nu$ (compare \cite{ckv}, Section 3.1). In other words, $b$ is superbasic in $M$. By \cite{ckv}, Lemma 3.1.2 the adjoint group $M^{\ad}$ is then isomorphic to a product of groups of the form $\Res_{F_i/\mathbb{Q}_p}PGL_{n_i}$ for some finite unramified extension $F_i$ of $\mathbb{Q}_p$ and some $n_i$. Using this together with the explicit form of the groups $G$ associated with Shimura PEL-data (Remark~\ref{StructureG}) one obtains that $M$ is of the form
\[
M \cong M'_0 \times \prod_{i=1}^t M_i,
\]
where $M_i = \Res_{F_i/\mathbb{Q}_p}\GL_{n_i}$ for some unramified extension $F_i$ of $\QQ_p$ and some $n_i \geq 1$ and where $M'_0$ is either isomorphic to $\GG_{m,\QQ_p}$ (in Case~(AL) or if in the isocrystal corresponding to $\rho(b)$ ($\rho\colon G_{\QQ_p} \mono \GL(V_{\QQ_p})$ the standard representation) the multiplicity of the simple isocrystal with slope $1/2$ is even) or where $M'_0$ sits in a non-split exact sequence
\begin{equation}\label{eqformm'}
0\rightarrow\Res_{F_0/\mathbb{Q}_p}SL_{2}\rightarrow M_0'\rightarrow \mathbb{G}_m\rightarrow 0,
\end{equation}
where $F_0$ is again unramified over $\QQ_p$.

The image $b'_i$ of $b'$ in each factor $M_0'$ or $M_i=\Res_{F_i/\mathbb{Q}_p}GL_{n_i}$ is basic and the valuation $q_i$ of its determinant is coprime to $n_i$. If $M_0'$ is as in \eqref{eqformm'} then $q_0=[F_0:\mathbb{Q}_p]$. If $M_0'\cong \mathbb{G}_m$ then $q_0=1$.  If $M_0'$ is as in \eqref{eqformm'} let $M_0 := \Res_{F_0/\mathbb{Q}_p}\GL_{2}$ and $n_0 = 2$, otherwise let $M_0 := M_0'$, $F_0 := \mathbb{Q}_p$, and $n_0 = 1$. For $i = 0,\dots,t$ we let $I_i$ be the set of $\QQ_p$-embeddings $F_i \to \QQbar_p$. Fixing one such embedding we identify $I_i$ with $\ZZ/d_i\ZZ$ or $\{1,\dots,d_i\}$, where $d_i := [F_i : \QQ_p]$.

Over $L$, the group $M_i$ decomposes into a product $\prod_{I_i} \GL_{n_i,L}$. For each $\tau\in I_i$ let $(e_{\tau,l})_{1\leq l\leq n_i}$ denote the standard basis of $L^{n_i}$. For $l\in \mathbb{Z}$ we define $e_{\tau,l}$ by $e_{\tau,l+n_i}=pe_{\tau,l}$. We want to define an element $x$ of $M(L)$ by defining its projection $x_i$ to each factor $M_0'(L)$ and $M_i(L) = \prod_{I_i}\GL_{n_i,L}$. For $i = 0,\dots,t$ define $x_i \in M_i(L)$ by
\[
x_i(e_{\tau,l}) := e_{\tau,l+\lfloor \frac{\tau q_i}{d_i}\rfloor-\lfloor \frac{(\tau-1) q_i}{d_i}\rfloor}.
\]
Then $\det x_i = \pm p^{q_i}$. Thus if $i=0$ and $M_0'$ is as in \eqref{eqformm'}, then $\det x_i \in \QQ_p^{\times}$ and we obtain that indeed $x_0\in M_0'(L)$. Furthermore, $(x_i\sigma)^{d_in_i}=p^{q_i}$, hence $x_i$ is basic and thus $x$ is contained in the given $\sigma$-conjugacy class $b$. Using the explicit form of $x$ one can see that we obtained a representative $x$ of $b$ with the following properties.
\begin{assertionlist}
\item $x$ is contained in the image of the embedding of $\widetilde W_M$ into $G(L)$.
\item Let $\nu_x=\nu$ and $\mu_x$ be the $M$-dominant Newton- and Hodge-polygon of $x$. Then for every root $\alpha$ of $T$ in $G$ we have $$|\langle \alpha,\mu_x-\nu_x\rangle|<2.$$ 
\item $\mu_x$ is minuscule in $G$.
\item Let $N$ be the unipotent radical of $P=MB$, in particular $\langle \alpha,\nu\rangle\geq 0$ for each root $\alpha$ of $T$ with $U_{\alpha}\subseteq N$. Then the previous property implies that $\langle \alpha,\mu_x\rangle$ (being an integer) is at least $-1$.
\item Properties (1), (2) and (4) also hold for $x\sigma(x)\dotsm\sigma^m(x)$ for all $m\geq 0$.
\end{assertionlist}

\begin{proof}[Proof of Theorem \ref{thmexfundalc}]
Let $M,N,P,$ and $x$ be as above. We denote by $\phi_x$ the morphism $g\mapsto \sigma(x^{-1}gx)$ on $G(L)$.\\

\noindent{\it Claim.} There is an Iwahori subgroup $\tilde \Ical\subset K=G(W(k))$ with $\tilde \Ical\cap M=\Ical\cap M$ and $\phi_x(\tilde \Ical\cap N)\subseteq \tilde \Ical\cap N$.\\

We first show that this claim implies the theorem. As $x\in \Omega_{M}$ we have $\phi_x(\Ical_M)=\sigma(\Ical_{M})=\Ical_{M}$. By \cite{trunc1}, Lemma 6.6 we have $\phi_{x}(\tilde \Ical\cap \overline{N})\supseteq \tilde \Ical\cap \overline N$. As $\tilde \Ical\subseteq K$ there is a $y\in W$ with $y^{-1}\tilde \Ical y=\Ical$. Let $M'=y^{-1}My$ and $P'=y^{-1}Py$ with unipotent radical $N'$ and opposite $\overline N'$. Then $y^{-1}\tilde \Ical_{M}y=\Ical_{M'}$. Let $x'=\sigma^{-1}(y)^{-1}xy\in b$. We have
\begin{eqnarray*}
\sigma(x' \Ical_{M'}(x')^{-1})&=&\sigma(x'y^{-1}\tilde \Ical_{M}y(x')^{-1})\\
&=&y^{-1}\sigma(x\tilde \Ical_{M}x^{-1})y\\
&=&\Ical_{M'}
\end{eqnarray*}
and similar translations for $N'$ and $\overline N'$. Hence $x'$ is a $P'$-fundamental alcove in $b$. From Property~(3) of $x$ and the definition of $x'$ we obtain that the Hodge polygon of $x'$ is also minuscule. The theorem follows.

It remains to show that the claim is true. Let $r>0$ be such that $G$ is split over some unramified extension of $\mathcal{O}_F$ of degree $r$. In particular, $\sigma^r$ then acts trivially on the root system of $G$ and on $\widetilde W$. Let $c:=\sigma(x)\sigma^{2}(x)\dotsm \sigma^r (x)\in \widetilde W$. Applying the decomposition $\widetilde W=W\ltimes X_*(T)$ we obtain $c=w_c\mu_c$. Let $n_c$ be the order of $w_c$ in $W$. Replacing $r$ by $n_cr$ and using $\sigma(x)\sigma^{2}(x)\dotsm \sigma^{rn_c}(x)=c^{n_c}\in X_*(T)$ we may assume that we already have $c\in X_*(T)$. From the explicit definition of $x$ we see that $c=r\nu$ as elements of $X_*(T)_{\mathbb{Q}}$. In particular, $c$ is dominant in $G$ and central in $M$. 

We define $\Ical'$ as the image of the standard Iwahori subgroup $\Ical$ under conjugation by $w_{0,M}w_0$. It satisfies $\Ical'_M=\Ical'\cap M=\Ical\cap M$, $\Ical'_N=\Ical'\cap N=K_1\cap N$ and $\Ical'_{\overline N}=\Ical'\cap \overline N=K\cap \overline N$.

The conjugation action of $c\in X_*(T)$ on root subgroups can be described as follows. Let $\alpha$ be a root and $U_{\alpha}$ the corresponding root subgroup. Then $p^cU_{\alpha}(\epsilon)p^{-c}=U_{\alpha}(p^{\langle \alpha,c\rangle}\epsilon)$. Using the fact that $\sigma^r$ acts trivially on $\widetilde W$ we obtain that $\sigma^r(c\Ical'_{M}c^{-1})=c\Ical'_{M}c^{-1}=\Ical'_{M}$ and $\sigma^r(c\Ical'_{N}c^{-1})= c\Ical'_{N}c^{-1}\subseteq \Ical'_{N}$. Hence $c$ itself is a $P$-fundamental alcove for the $\sigma^r$-conjugacy class of $c$ in $G$. Let $\tilde \Ical$ with $\tilde \Ical\cap M=\Ical'_{M}$ be unique Iwahori subgroup of $G(L)$ which has in addition the property that $\tilde \Ical\cap N$ is minimal containing $\Ical'_{N}, \phi_{x}(\Ical'_{N}),\dotsc, \phi_{x}^{r-1} (\Ical'_{N})$, cf. \cite{trunc1}, Lemma 6.8. Then $\phi_{x}(\tilde \Ical)$ is again an Iwahori subgroup. It satisfies $\phi_{x}(\tilde \Ical)\cap M=\phi_{x}(\tilde \Ical\cap M)=\Ical_{M}$ and the analogous minimality property for $\phi_{x}(\Ical'_{N}),\dotsc, \phi_{x}^{r} (\Ical'_{N})$. We have $\phi_{x}^r(\Ical'_{N})=\sigma^r(c\Ical'_{N}c^{-1})\subseteq \Ical'_{N}$. Thus $\phi_{x}(\tilde \Ical'\cap N)\subseteq \tilde \Ical'\cap N$. It remains to show that $\tilde \Ical\subseteq K$. This is equivalent to $\tilde \Ical_{M},\tilde \Ical_N,\tilde \Ical_{\overline N}\subseteq K$. We have $\tilde \Ical_M=\Ical_M\subseteq K$. The other two containments are equivalent to $K_1\cap N\subseteq \tilde \Ical_N\subseteq K\cap N=\Ical\cap N$. As $K_1\cap N=\Ical'_N$, the first of these inclusions follows from the definition of $\tilde \Ical_N$. For the second we have to show that $\Ical'_{N}, \phi_{x}(\Ical'_{N}),\dotsc, \phi_{x}^{r-1} (\Ical'_{N})$ are all contained in $\Ical\cap N$. We decompose $\Ical'_N$ into root subgroups. For each root $\alpha$ of $T$ in $N$ we have to show that $\phi_{x}^n(U_{\alpha}(p\mathcal{O}_L))\subseteq \Ical\cap N$ for $n=0,\dotsc,r-1$. We write $x_n=\sigma(x)\sigma^2(x)\dotsm\sigma^{n}(x)$. By properties 4. and 5. of $x$ it is of the form $w_n\mu_n$ for some $w_n\in W_M$ and $\mu_n$ satisfying $\langle\beta,\mu_n\rangle\geq -1$ for each root $\beta$ of $T$ in $N$. Hence 
\begin{eqnarray*}
\phi_{x}^n(U_{\alpha}(p\mathcal{O}_L))&=&x_n\sigma^n(U_{\alpha}(p\mathcal{O}_L))x_n^{-1}\\
&=&w_n\mu_n(U_{\sigma^n(\alpha)}(p\mathcal{O}_L))\mu_n^{-1}w_n^{-1}\\
&=&w_n U_{\sigma^n(\alpha)}(p^{1+\langle \sigma^n(\alpha),\mu_n\rangle}\mathcal{O}_L)w_n^{-1}\\
&\subset& \Ical\cap N
\end{eqnarray*}
which finishes the proof of the claim and of the theorem. 
\end{proof}

\subsection{Generic Newton strata in Ekedahl-Oort strata in the split case}
We now determine the set of Newton strata which intersect the closure of a given Ekedahl-Oort stratum in a moduli space of abelian varieties with $\Dscr$-structure where the associated group $G$ is split. 

The following lemma shows that the existence of fundamental alcoves implies a combinatorial criterion for non-emptiness of intersections of Iwahori-double cosets and Newton strata. In particular, it can be used to compare these intersections for the two cases $L=\Frac (W(k))$ and $L=k(\!(z)\!)$, compare for example the proof of Theorem~\ref{corEONP}.

\begin{lemma}[\cite{GHKR2}, Proposition 13.3.1]\label{CombLemma}
Let $x_b \in \widetilde{W}$ be a fundamental alcove associated with an element of $B(G)$. Let $b$ be the associated $\sigma$-conjugacy class in $G(L)$ (for $L=k\dlbrack z\drbrack$ or $L=\Frac(W(k))$). Let $\Ical$ be the standard Iwahori subgroup defined as the inverse image of $B(k)$ under the projection $G(O_L)\rightarrow G(k)$. Then
\[
\set{x \in \widetilde W}{\Ical\dot x\Ical \cap b \neq \emptyset} = \{\text{$\dot x\in \Ical\dot y^{-1}\Ical\dot x_b\Ical\sigma(\dot y)\Ical$ for some $\dot y\in \widetilde{W}$}\}.
\]
\end{lemma}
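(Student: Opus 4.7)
The plan is to prove the two inclusions separately, each relying on one simple input: for the containment ``$\subseteq$'' we use only the Bruhat--Tits decomposition of $G(L)$ into Iwahori double cosets, while for ``$\supseteq$'' we use the key fundamental-alcove property already recorded in Remark~\ref{remFundAlc} (every element of $\Ical \dot x_b \Ical$ is $\Ical$-$\sigma$-conjugate to $\dot x_b$).

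For the inclusion ``$\subseteq$'', assume $\Ical \dot x \Ical \cap b \ne \emptyset$ and pick $g \in \Ical \dot x \Ical$ with $g \in b$. By definition of $b = \dbrack{\dot x_b}$ there exists $h \in G(L)$ with $g = h^{-1} \dot x_b \sigma(h)$. The Bruhat--Tits decomposition yields $y \in \widetilde{W}$ with $h \in \Ical \dot y \Ical$, so
\[
g \;\in\; (\Ical\dot y\Ical)^{-1}\dot x_b\,\sigma(\Ical\dot y\Ical) \;=\; \Ical\dot y^{-1}\Ical\dot x_b\Ical\sigma(\dot y)\Ical,
\]
using $\sigma(\Ical) = \Ical$. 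Since $\dot x$ and $g$ lie in the same Iwahori double coset, multiplying the above set on left and right by $\Ical$ (which leaves it unchanged) shows $\dot x$ itself lies in $\Ical\dot y^{-1}\Ical\dot x_b\Ical\sigma(\dot y)\Ical$.

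For the inclusion ``$\supseteq$'', suppose $\dot x = i_1 \dot y^{-1} g_0 \sigma(\dot y) i_4$ for some $y\in \widetilde{W}$, some $i_1,i_4 \in \Ical$, and some $g_0 \in \Ical \dot x_b \Ical$. By Remark~\ref{remFundAlc} there is $i\in \Ical$ with $g_0 = i\,\dot x_b\,\sigma(i)^{-1}$. Set $w := i_1 \dot y^{-1} i \in \Ical \dot y^{-1}\Ical$. A direct substitution and rearrangement of the resulting Frobenius gives
\[
\dot x \;=\; w\,\dot x_b\,\sigma(w)^{-1}\cdot\sigma(i_1)\, i_4.
\]
The left factor $w\,\dot x_b\,\sigma(w)^{-1}$ is precisely the $\sigma$-conjugate of $\dot x_b$ by $w^{-1}$, hence lies in $b$. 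Since $\sigma(i_1) i_4 \in \Ical$, the element $\dot x' := w\,\dot x_b\,\sigma(w)^{-1}$ satisfies $\dot x' = \dot x \cdot (\sigma(i_1) i_4)^{-1} \in \Ical \dot x \Ical$, and therefore $\Ical\dot x\Ical \cap b \ni \dot x'$, as required.

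The main obstacle is only the bookkeeping in the second step, where one must isolate a single $\sigma$-conjugating element from the string $i_1\dot y^{-1} i \dot x_b \sigma(i)^{-1}\sigma(\dot y) i_4$; once one spots that it is exactly $w = i_1\dot y^{-1} i$ that does the job (with a harmless Iwahori factor on the right absorbed into $\Ical\dot x\Ical$), the proof collapses into the two short paragraphs above. No further input beyond Remark~\ref{remFundAlc} and the Bruhat--Tits decomposition is needed.
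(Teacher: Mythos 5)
Your proof is correct and follows the same route as the paper's: the inclusion $\subseteq$ via the Bruhat--Tits decomposition applied to the $\sigma$-conjugating element, and $\supseteq$ by invoking Remark~\ref{remFundAlc} to write any element of $\Ical\dot x_b\Ical$ as an $\Ical$-$\sigma$-conjugate of $\dot x_b$ and then absorbing the leftover Iwahori factor into the double coset. You simply spell out a couple of intermediate steps (naming $w=i_1\dot y^{-1}i$, and noting explicitly that $\Ical\dot y^{-1}\Ical\dot x_b\Ical\sigma(\dot y)\Ical$ is a union of Iwahori double cosets so that $\dot x$ itself lands there) that the paper leaves implicit; these are correct and harmless clarifications, not a different argument.
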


\begin{proof}
To prove the containment $\subseteq$ let $b_0\in b$ and assume that $\dot x=g^{-1}b_0\sigma(g)\in \Ical\dot x\Ical$ for some $g\in G(L)$. Using the Bruhat-Tits decomposition we have $g\in \Ical\dot y\Ical$ for some $y\in \widetilde{W}$. Hence $x$ is contained in the right hand side. For the other inclusion assume that $\Ical\dot x\Ical \subseteq \Ical\dot y^{-1}\Ical\dot x_b\Ical\sigma(\dot y)\Ical$. Then $\Ical\dot x\Ical$ meets $\dot y^{-1}\Ical\dot x_b\Ical\sigma(\dot y)$. By Remark~\ref{remFundAlc} every element of $\Ical\dot x_b\Ical$ can be written as $i^{-1}\dot x_b\sigma(i)$ for some $i \in \Ical$. Hence there is an element in $\Ical\dot x\Ical$ of the form
$\dot y^{-1}i^{-1}\dot x_b\sigma(i\dot y)$. Thus $\Ical\dot x\Ical \cap b \neq \emptyset$.
\end{proof}

\begin{theorem}\label{corEONP}
Let $\Dscr$ be such that the associated group $G$ is split. Let $w \in \leftexp{J}{W}$ and $b \in B(G,\mu)$ such that $\Ascr_0^w \cap \Ncal_b \ne \emptyset$.  Let $\Ascr_0^{w'}$ be the unique Ekedahl-Oort stratum containing fundamental elements associated with $b$. Then $\Ascr_0^{w'} \subseteq \overline{\Ascr_{0}^{w}}$.
\end{theorem}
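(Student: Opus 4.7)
The plan is to transport the statement to the loop group $LG$ via Corollary \ref{corloopeo}, where closure relations of truncation strata match those of the Ekedahl-Oort strata, and then apply results of the first author in \cite{trunc1}.

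First, I would reformulate the hypothesis. A point of $\Ascr_0^w \cap \Ncal_b$ yields, via its associated $p$-divisible group with $\Dscr$-structure, a class $c \in C(G,\mu)$ with ${\rm tc}(c) = w$ and image $b$ in $B(G)$. By the explicit description of the preimages of ${\rm tc}$ (Remark \ref{DZipToPDiv}), this means that the $\sigma$-conjugacy class $b$ in $G(L)$ (with $L = \Frac W(k)$) meets the set $K \cdot \dot w \dot x \mu(p) K_1 \cdot \sigma(K)$. Using the Bruhat-Tits decomposition this reduces to the statement that $\Ical \dot y \Ical \cap b \ne \emptyset$ for at least one $y$ in a certain combinatorially defined subset $S_w \subset \widetilde W$ depending only on $w$, $x$ and $\mu$.

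Next, by Proposition \ref{FundAlcSplit}, $b$ contains a fundamental alcove $x_b$ with $w' = {\rm tc}(\dbrack{\dot x_b})$ in the notation of Theorem \ref{MinimalNewtonSplit}. The criterion furnished by Lemma \ref{CombLemma} for $\Ical \dot y \Ical \cap b \ne \emptyset$ is purely internal to $\widetilde W$ and does not distinguish between the choices $L = \Frac W(k)$ and $L = k(\!(z)\!)$, since the classification of $B(G)$ and the relevant Iwahori-double-coset combinatorics are independent of the choice of local field. Consequently the analogous hypothesis holds in $LG$: the truncation stratum $\mathcal{S}_{w,\mu}$ meets the $\sigma$-conjugacy class of $b$ in $LG$, and since $\dot x_b \in \mathcal{S}_{w',\mu}$ by construction, this stratum also lies in the same $\sigma$-conjugacy class.

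At this point I would invoke Corollary 4.7 of \cite{trunc1}, which describes the closure relations among truncation strata in terms of the order $\preceq$, together with Remark \ref{remFundAlc} (which says that every element of $\Ical\dot x_b\Ical$ is already $\Ical$-$\sigma$-conjugate to $\dot x_b$, forcing $\mathcal{S}_{w',\mu}$ to be the minimal truncation stratum inside the $\sigma$-conjugacy class of $b$), to conclude $\mathcal{S}_{w',\mu} \subseteq \overline{\mathcal{S}_{w,\mu}}$. Applying Corollary \ref{corloopeo} then yields $\Ascr_0^{w'} \subseteq \overline{\Ascr_0^w}$ as required. The main technical obstacle I anticipate is the bookkeeping in step two: the factor $K_1$ in $K \dot w \dot x \mu(p) K_1 \sigma(K)$ means the set decomposes into a union of $\Ical$-double cosets indexed by a non-trivial set $S_w$, so Lemma \ref{CombLemma} must be applied uniformly across this union and the resulting combinatorial condition must be checked to correspond, on the loop-group side, exactly to the condition $\mathcal{S}_{w,\mu} \cap b_{LG} \ne \emptyset$.
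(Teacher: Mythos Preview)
Your overall strategy matches the paper's: translate the hypothesis to a statement about Iwahori double cosets, use Lemma~\ref{CombLemma} to pass between $L=\Frac W(k)$ and $L=k(\!(z)\!)$, invoke results of \cite{trunc1} on the loop-group side to get $\mathcal{S}_{w',\mu}\subseteq\overline{\mathcal{S}_{w,\mu}}$, and then apply Corollary~\ref{corloopeo}.

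However, your ``main technical obstacle'' is misdiagnosed, and this reflects a genuine confusion. You write that $K\cdot\dot w\dot x\mu(p)K_1\cdot\sigma(K)$ decomposes into a union of Iwahori double cosets indexed by a nontrivial set $S_w$. But $K_1\subset\Ical$, so $K_1\,\dot w\dot x\mu(p)\,K_1$ is \emph{contained in the single} Iwahori double coset $\Ical\,\dot w\dot x\mu(p)\,\Ical$. Thus the forward implication (from the $p$-adic hypothesis to the Iwahori condition) is trivial and needs no bookkeeping. The real difficulty lies in the opposite direction on the loop-group side: having established via Lemma~\ref{CombLemma} that $\Ical_{LG}\,\dot w\dot x\mu(z)\,\Ical_{LG}$ meets $b_{LG}$, you must deduce that the truncation stratum $\mathcal{S}_{w,\mu}$ (which is defined using $K_1$, not $\Ical$) meets $b_{LG}$. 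This step is not automatic and is where the paper invokes \cite{trunc1}~Theorem~1.1(2), whose proof carries over to the $p$-adic setting as well. Without this, your argument has a gap at precisely the point you flagged, but for the opposite reason.

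A minor point: for the conclusion $\mathcal{S}_{w',\mu}\subseteq\overline{\mathcal{S}_{w,\mu}}$ the paper cites \cite{trunc1}~Theorem~1.4, Proposition~5.5 and Lemma~6.11, rather than Corollary~4.7 plus Remark~\ref{remFundAlc}. Remark~\ref{remFundAlc} alone does not establish that $\mathcal{S}_{w',\mu}$ is the minimal truncation stratum in $b$; that is the content of Proposition~5.5 and Lemma~6.11 of \cite{trunc1}.
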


For the Siegel moduli space this criterion has been conjectured by Oort (\cite{Oort1}, Conjecture 6.9) and has been shown by Harashita in \cite{H3} using different methods.

\begin{proof}
By Theorem~\ref{corclosure} we have to show that there is a $y\in W_J$ with $y^{-1}w'x_{\mu}\sigma(y)x_{\mu}^{-1} \leq w$ with respect to the Bruhat order.

\noindent {\it Claim.} Let $\nu(b)\in X_*(T)_{\QQ,{\rm dom}}$ be the Newton point of $b\in B(G,\mu)$. Then we have $\Ascr_0^w\cap\mathcal{N}_b\neq\emptyset$ if and only if the $\sigma$-conjugacy class in the loop group $LG$ corresponding to $\nu(b)\in X_*(T)_{\QQ,{\rm dom}}$ intersects $\mathcal{S}_{w,\mu}$ non-trivially.

This claim implies the theorem. Indeed, in the context of loop groups, Theorem 1.4, Proposition 5.5, and Lemma 6.11 of \cite{trunc1} together imply the following. Let $w \in \leftexp{J}{W}$ and $b \in B(G,\mu)$ such that $\mathcal{S}_{w,\mu}$ intersects the $\sigma$-conjugacy class $b$. Let $\mathcal{S}_{w',\mu}$ be the truncation stratum associated with a fundamental alcove for $b$. Then $\mathcal{S}_{w',\mu} \subseteq \mathcal{S}_{w,\mu}$. Then the claim together with Corollary~\ref{corloopeo} translate this assertion into the one we want to prove. 

It remains to prove the claim. By Remark~\ref{DZipToPDiv} we see that $\Ascr_0^w\cap\mathcal{N}_b\neq\emptyset$ holds if and only if $K_1\dot w\dot x_{\mu}\mu(p)K_1 \cap b \ne \emptyset$. The same proof as for \cite{trunc1}, Theorem 1.1(2) shows that this condition is equivalent to the same condition with $K_1$ replaced by $\Ical$. Here $\Ical$ denotes the chosen Iwahori subgroup for $O_L = W(k)$, i.e. it is the inverse image of $B(k)$ under $K\rightarrow G(k)$. By Lemma~\ref{CombLemma} this condition can be expressed purely in terms of elements of the affine Weyl group. In particular, it is equivalent to the condition that $\Ical_{LG}\dot w\dot x_{\mu}\mu(z)\Ical_{LG}$ meets the $\sigma$-conjugacy class in $G(k\dlbrack z\drbrack)$ associated with $(\nu(b),\kappa(b))\in X_*(T)_{\mathbb{Q}}^{\Gamma}\times\pi_1(G)_{\Gamma}$. Here $\Ical_{LG}$ denotes the Iwahori subgroup for $O_L = k\dbrack{z}$, i.e. it is the inverse image of $B(k)$ under the projection $G(k[[z]])\rightarrow G(k)$. By \cite{trunc1}, Theorem 1.1(2) this condition is equivalent to the condition that the Newton stratum in the loop group $LG$ corresponding to $(\nu(b),\kappa(b))$ intersects $\mathcal{S}_{w,\mu}$.
\end{proof}

If $G$ is split we attach to the isogeny class of a $p$-divisible group $X$ with $\Dscr$-structure the unique isomorphism class of the $\Dscr$-zip of a minimal $p$-divisible group with $\Dscr$-structure isogenous to $X$. This yields an injective map
\begin{equation}\label{defwb}
B(G,\mu) \mono \leftexp{J}{W}, \qquad b \sends w(b)
\end{equation}
whose image consists of the $w \in \leftexp{J}{W}$ such that the corresponding Ekedahl-Oort stratum $\Ascr^w_0$ is minimal (or, equivalently in the split case, fundamental). We show below that for $b,b' \in B(G,\mu)$ one has $b' \leq b$ if and only if $w(b') \preceq w(b)$.

\begin{remark}\label{MinToNewton}
Theorem~\ref{corEONP} shows that $w(b)$ for $b \in B(G,\mu)$ can also be described as the unique minimal element (with respect to the partial order $\preceq$) in the set
\[
\leftexp{J}{W}_b := \set{w \in \leftexp{J}{W}}{\Ascr^w_0 \cap \Ncal_b \ne \emptyset}.
\]
Equivalently, it is the unique element of minimal length in $\leftexp{J}{W}_b$.
\end{remark}

\begin{corollary}\label{corclosleaf1}\label{NewtonEOSpec}
Let $G$ be split and let $b, b' \in B(G,\mu)$ be two elements with $b' \leq b$. Then
\[
\Ascr^{w(b')}_0 \subseteq \overline{\Ascr^{w(b)}_0}.
\]
In particular, for $b,b' \in B(G,\mu)$ one has $b' \leq b$ if and only if $w(b') \preceq w(b)$.
\end{corollary}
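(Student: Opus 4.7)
The plan is to deduce the statement from Theorem~\ref{corEONP} together with a specialization argument carried out in the loop group $LG$.

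First, I would observe that by Theorem~\ref{corclosure} the closure $\overline{\Ascr_0^{w(b)}}$ is the disjoint union of the strata $\Ascr_0^w$ for $w \preceq w(b)$. Hence, if one can produce any $w \preceq w(b)$ with $\Ascr_0^w \cap \Ncal_{b'}\neq\emptyset$, then Theorem~\ref{corEONP} applied to $(w,b')$ yields $\Ascr_0^{w(b')}\subseteq \overline{\Ascr_0^w}\subseteq \overline{\Ascr_0^{w(b)}}$, as desired. The task thus reduces to showing that for every $b'\leq b$ in $B(G,\mu)$ the closed substack $\overline{\Ascr_0^{w(b)}}$ meets the Newton stratum $\Ncal_{b'}$.

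Next, I would translate this reduction into the loop group $LG$ via Corollary~\ref{corloopeo}: the closure of the truncation stratum $\Scal_{w(b),\mu}$ in $LG$ must meet the $\sigma$-conjugacy class associated with $b'$. By construction $\Scal_{w(b),\mu}$ contains a fundamental alcove $\dot x_b$ representing $b$, and by the theory of truncation strata and their specialization to Newton strata on $LG$ as developed in~\cite{trunc1} (building on~\cite{GHKR2}), the closure $\overline{\Scal_{w(b),\mu}}$ meets the $\sigma$-conjugacy class of every $b'\leq b$ in $B(G,\mu)$. This is the main obstacle: it amounts to a weak Grothendieck conjecture in the loop group, which has to be extracted from the explicit description of fundamental alcoves (cf.\ Proposition~\ref{FundAlcSplit}) and the combinatorics of the affine Weyl group. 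Once this loop-group statement is established, transferring back via Corollary~\ref{corloopeo} produces the required $w$ and completes the main containment.

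Finally, for the ``in particular'' equivalence: the implication $b'\leq b \Rightarrow w(b')\preceq w(b)$ is the content of the containment just proved combined with Theorem~\ref{corclosure}. For the converse, assume $w(b')\preceq w(b)$. Then
\[
\Ascr_0^{w(b')}\subseteq \overline{\Ascr_0^{w(b)}}\subseteq \overline{\Ncal_b},
\]
where the first inclusion is Theorem~\ref{corclosure} and the second uses $\Ascr_0^{w(b)}\subseteq \Ncal_b$. The Rapoport--Richartz specialization theorem~\eqref{NewtonSpec} gives $\overline{\Ncal_b}\subseteq \bigcup_{b''\leq b}\Ncal_{b''}$, and since $\Ascr_0^{w(b')}\subseteq \Ncal_{b'}$ by definition of $w(b')$, we obtain $b'\leq b$.
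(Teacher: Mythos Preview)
Your argument is correct, and the essential ingredients are the same as in the paper: the key input is the loop-group analogue proved in \cite{trunc1}, transferred to $\Ascr_0$ via Corollary~\ref{corloopeo}. However, you take an unnecessary detour. The paper observes directly that \cite{trunc1}, Proposition~5.7 and Lemma~5.3 already establish $\Scal_{w(b'),\mu} \subseteq \overline{\Scal_{w(b),\mu}}$ for $b' \leq b$, and Corollary~\ref{corloopeo} immediately converts this to $\Ascr_0^{w(b')} \subseteq \overline{\Ascr_0^{w(b)}}$. Your reduction to ``$\overline{\Ascr_0^{w(b)}}$ meets $\Ncal_{b'}$'' followed by an appeal to Theorem~\ref{corEONP} is valid but circuitous: the loop-group statement you need (closure of $\Scal_{w(b),\mu}$ meets the $b'$-conjugacy class) is in fact a consequence of the stronger containment the paper cites, so you may as well use that containment directly. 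Your treatment of the ``in particular'' equivalence, spelling out the converse via \eqref{NewtonSpec}, is more explicit than the paper's and is a useful addition.
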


Although the last assertion is purely group-theoretical we do not know of any purely group-theoretic proof of this result.

\begin{proof}
The analogous statement for loop groups is shown in \cite{trunc1}, Proposition 5.7 and Lemma 5.3. The assertion we are interested in thus follows from Corollary~\ref{corloopeo}.
\end{proof}

\begin{proposition}\label{cor812}
Let $G$ be split. Let $w\in \leftexp{J}{W}$ and let
\[
{\rm Min}(w) := \set{w' \in \leftexp{J}{W}}{\text{$\Ascr_0^{w'}$ is minimal and $\Ascr_0^{w'} \subseteq \overline{\Ascr_0^{w}}$}}.
\]
Let $b \in B(G,\mu)$ be a maximal element in $\set{b' \in B(G,\mu)}{w(b') \in {\rm Min}(w)}$ (i.e., in the set of Newton points such that their corresponding Newton strata contain a minimal Ekedahl-Oort stratum $\Ascr_0^{w'}$ with $\Ascr_0^{w'} \subseteq \overline{\Ascr_0^{w}}$). Then the Newton stratum $\Ncal_b$ contains the generic point of some irreducible component of $\Ascr_{0}^{w}$.
\end{proposition}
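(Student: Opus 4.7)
The plan is to analyze the irreducible decomposition of $\Ascr_0^w$ and match the generic Newton points of its components with the maximal element $b$. Let $Z_1,\dots,Z_r$ denote the irreducible components of $\Ascr_0^w$ (viewed as reduced locally closed substack), and for each $i$ let $b_i^\ast \in B(G,\mu)$ be the Newton point of the generic point of $Z_i$. Because $Z_i \subseteq \Ascr_0^w$, we have $\Ascr_0^w \cap \Ncal_{b_i^\ast} \ne \emptyset$, so Theorem~\ref{corEONP} yields $w(b_i^\ast) \preceq w$, and then Theorem~\ref{corclosure} gives $\Ascr_0^{w(b_i^\ast)} \subseteq \overline{\Ascr_0^w}$. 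Thus each $b_i^\ast$ lies in the set
\[
S := \{\,b' \in B(G,\mu) \mid w(b') \in {\rm Min}(w)\,\},
\]
of which $b$ is by hypothesis a maximal element.

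Next I would check that $\overline{\Ascr_0^w}$ meets $\Ncal_b$: by the definition of $w(b)$ from Theorem~\ref{MinimalNewtonSplit} the stratum $\Ascr_0^{w(b)}$ is contained in $\Ncal_b$, and the hypothesis $w(b) \in {\rm Min}(w)$ means $\Ascr_0^{w(b)} \subseteq \overline{\Ascr_0^w}$. Hence the intersection is non-empty. Since $\overline{\Ascr_0^w} = \bigcup_i \overline{Z_i}$, at least one index $i$ satisfies $\overline{Z_i} \cap \Ncal_b \ne \emptyset$.

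Finally, Grothendieck's specialization theorem~\eqref{NewtonSpec} bounds every Newton point occurring on $\overline{Z_i}$ above by the generic Newton point $b_i^\ast$; in particular $b \leq b_i^\ast$. But $b_i^\ast \in S$ and $b$ is maximal in $S$, so $b = b_i^\ast$. This means $\Ncal_b$ contains the generic point of $Z_i$, proving the proposition.

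There is no real obstacle in this argument: the two non-trivial inputs (that every $b_i^\ast$ lies in $S$ via Theorem~\ref{corEONP}, and that $\overline{\Ascr_0^w}$ meets $\Ncal_b$ via the minimality characterization of $w(b)$) are already in place, and the maximality of $b$ in $S$ combined with Grothendieck's specialization forces equality $b = b_i^\ast$ for some irreducible component. The splitness of $G$ enters only through its use in Theorem~\ref{corEONP} and in defining the map $b' \mapsto w(b')$, both of which are granted.
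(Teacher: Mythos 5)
Your proposal is correct and follows essentially the same approach as the paper: both use Theorem~\ref{corEONP} to place the generic Newton points of the components of $\Ascr_0^w$ inside the set $S$, observe via the minimality of $\Ascr_0^{w(b)}$ that $\overline{\Ascr_0^w}$ meets $\Ncal_b$, and then combine Grothendieck's specialization theorem~\eqref{NewtonSpec} with the maximality of $b$ in $S$ to force equality with the generic Newton point of some component. The paper phrases the argument by first identifying the set of Newton points realized on $\overline{\Ascr_0^w}$ with those of the minimal Ekedahl-Oort strata in its closure, while you work directly component by component, but the underlying reasoning is the same.
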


In the Siegel case this result is shown by Harashita in \cite{H3}. In this case Ekedahl and van der Geer have shown for $p > 2$ that each Ekedahl-Oort stratum which is not contained in the supersingular locus is irreducible (\cite{EkedahlvdGeer}, Theorem 11.5). In particular, there is a unique generic Newton polygon in each Ekedahl-Oort stratum $\Ascr_{0}^{w}$ of $\mathcal{A}_g$. Then the above result determines this Newton polygon.

\begin{proof}
The sets of generic Newton points of irreducible components of $\Ascr_{0}^{w}$ and of $\overline{\Ascr_{0}^{w}}$ coincide. By Theorem~\ref{corEONP}, the set of Newton points of elements of $\overline{\Ascr_{0}^{w}}(\kgbar)$ is equal to the set of Newton points of points $\xi \in \overline{\Ascr_{0}^{w}}$ which are in a minimal Ekedahl-Oort stratum. By the definition of minimality all $p$-divisible groups in one minimal Ekedahl-Oort stratum are isomorphic and in particular in the same Newton stratum. It is equal to the $\sigma$-conjugacy class $[\dot x]$ of one (equivalently, any) representative $\dot x$ of a fundamental element $x\in\widetilde{W}$ corresponding to this minimal Ekedahl-Oort stratum. Let now $b \in B(G,\mu)$ be maximal among these $\sigma$-conjugacy classes $[\dot x]$, and let $Y$ be an irreducible component of $\overline{\Ascr_{0}^{w}}$ containing a point of $\Ncal_b$. By the Grothendieck specialization theorem~\eqref{NewtonSpec} the generic Newton stratum in $Y$ is greater or equal to $b$. By maximality of $b$ they have to be equal.
\end{proof}


\section{Properties of Ekedahl-Oort strata}\label{sec8}

\subsection{Dimensions of Ekedahl-Oort strata}

\begin{theorem}\label{thmEOnonempty}
Every Ekedahl-Oort stratum $\Ascr_0^{w}\subseteq \Ascr_0$ for $w\in {}^{J}W$ is non-empty.
\end{theorem}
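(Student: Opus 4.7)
The plan is to combine the flatness of $\zeta$ established in Section~5 with the non-emptiness of the superspecial stratum from Proposition~\ref{remminbasic}. The key observation is that an open set in the topological space associated to $(\leftexp{J}{W},\preceq)$ is upward-closed under $\preceq$, so any open set containing the unique minimal element $1$ must be the whole space.

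First, by Theorem~\ref{zetaflat} the morphism $\zeta\colon\Ascr_0\to[G_\kappa\backslash X_J]$ is flat. Flatness is preserved under base change, and flat morphisms locally of finite type between locally noetherian schemes (or algebraic stacks) are universally open. Hence the base-changed morphism
\[
\zeta_{\kgbar}\colon \Ascr_0\otimes_\kappa\kgbar\lto [G_\kappa\backslash X_J]\otimes_\kappa\kgbar
\]
is open, so its image $U:=\zeta_{\kgbar}(\Ascr_0\otimes\kgbar)$ is an open subset of the target.

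Next, I would invoke the topological description of the target. By Proposition~\ref{DescribeZJ} the underlying topological space of $[G_\kappa\backslash X_J]\otimes\kgbar$ is homeomorphic to the Alexandrov space attached to the partially ordered set $(\leftexp{J}{W},\preceq)$; in this topology the closure of a point $w$ equals $\set{w'\in\leftexp{J}{W}}{w'\preceq w}$, so a subset is open if and only if it is upward-closed for $\preceq$. By Example~\ref{MuOrdSS} the element $1\in\leftexp{J}{W}$ is the unique minimum with respect to $\preceq$. Consequently, any non-empty open subset of $\leftexp{J}{W}$ that contains $1$ must equal all of $\leftexp{J}{W}$.

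It therefore suffices to show that $1\in U$, i.e.~that the superspecial Ekedahl-Oort stratum $\Ascr_0^1$ is non-empty. This is precisely Proposition~\ref{remminbasic}, whose proof uses only the classical non-emptiness of the basic Newton stratum (by Fargues~\cite{Fargues}~3.1.8 or Kottwitz~\cite{Ko_ShFin}~\S18) together with the explicit verification that $w_0w_{0,\mu}\mu$ lies in $\Omega$ and represents the basic class. Given this input, $1\in U$ and the upward-closure argument yields $U=\leftexp{J}{W}$, which means $\Ascr_0^w$ is non-empty for every $w\in\leftexp{J}{W}$.

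The only real content beyond the preceding sections is the recognition that, in the Alexandrov topology associated with $\preceq$, openness is equivalent to upward-closedness, so that surjectivity of $\zeta_{\kgbar}$ on points follows from hitting a single minimum. No obstacle remains, since both ingredients -- flatness of $\zeta$ and non-emptiness of $\Ascr_0^1$ -- are already in hand; as a bonus, this argument promotes Theorem~\ref{zetaflat} to \emph{faithful} flatness, which is the form stated in the introduction.
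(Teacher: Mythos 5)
Your proposal is correct and matches the paper's own proof: both establish that $\zeta_{\kgbar}$ is an open map, identify the topological space of $[G_\kappa\backslash X_J]\otimes\kgbar$ with $(\leftexp{J}{W},\preceq)$, and conclude surjectivity from the fact that the open image contains the unique minimal element $w=1$, which is furnished by Proposition~\ref{remminbasic}. The only cosmetic difference is that you phrase the last step via upward-closedness of open sets, while the paper phrases it as ``the unique closed point of the target lies in the image''; these are the same observation, and you also derive openness from flatness where the paper has already established openness of $\zeta^{\#}$ directly in the course of proving Theorem~\ref{zetaflat}.
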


\begin{proof}
It is enough to show that the morphism
\[
\zeta \otimes \id_{\kgbar}\colon \Ascr_0 \otimes \kgbar \rightarrow [G_{\kappa} \backslash X_J]  \otimes \kgbar
\]
is surjective. As it is open, this follows as soon as we know that the unique closed point of $[G_{\kappa} \backslash X_J] \otimes \kgbar$ is in the image. The underlying topological space of $[G_{\kappa} \backslash X_J] \otimes \kgbar$ is $\leftexp{J}{W}$ with the topology induced by the partial order $\preceq$ (Proposition~\ref{DescribeZJ}). Its unique closed point is the superspecial element $w=1$. By Proposition~\ref{remminbasic} this is in the image.
\end{proof}

In \cite{We2} it is already shown that all strata are nonempty if $B$ is a totally real field extension of $\QQ$.

\begin{corollary}\label{DimOfEO}
Each Ekedahl-Oort stratum $\Ascr^w_0$ is equi-dimensional of dimension $\ell(w)$.
\end{corollary}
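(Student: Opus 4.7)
The plan is to deduce both assertions from the faithful flatness of $\zeta$ (Theorem~\ref{zetaflat} together with the surjectivity from Theorem~\ref{thmEOnonempty}), reducing the dimension count to a calculation on the smooth $G$-torsor cover $\Ascr^{\#}_0 \to \Ascr_0$. Since faithful flatness is preserved under base change along the $G$-torsor $X_J \to [G_\kappa \backslash X_J]$, the induced morphism $\zeta^{\#}\colon \Ascr^{\#}_0 \to X_J$ is again faithfully flat, and now lives in the category of schemes.

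The first step is to identify the relevant preimages. Under $X_J \to [G_\kappa \backslash X_J]$ the point $w$ pulls back to the $G$-orbit $O_w \subseteq X_J$ described by~\eqref{DescribeXJOrbit}; consequently the preimage of $\Ascr_0^w$ in $\Ascr^{\#}_0$ coincides with $(\zeta^{\#})^{-1}(O_w)$ and is a $G$-torsor over $\Ascr_0^w$. Both $\Ascr^{\#}_0$ and $X_J$ are smooth, hence equi-dimensional, with $\dim X_J = \dim G$ by Remark~\ref{ZJasTorsor} and $\dim \Ascr^{\#}_0 = \dim \Ascr_0 + \dim G = \dim(G/P_J) + \dim G$, using the classical formula $\dim \Ascr_0 = \dim(G/P_J)$ for the Shimura variety.

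The key geometric input is that $O_w$ is smooth, irreducible, and locally closed of dimension $\ell(w) + \dim P_J$, which follows from the explicit representative in~\eqref{DescribeXJOrbit} by a stabilizer computation (the stabilizer of $(P_J,\leftexp{\dot w}{P_K},[\dot w\dot x])$ under the $\sigma$-twisted $G$-action has dimension $\dim G - \dim P_J - \ell(w)$), or by extracting it from the orbit description used in~\cite{PWZ}. Flatness of $\zeta^{\#}$ together with equi-dimensionality of its source and target then gives that $(\zeta^{\#})^{-1}(O_w)$ is equi-dimensional of dimension
\[
\dim O_w + \dim \Ascr^{\#}_0 - \dim X_J = \ell(w) + \dim P_J + \dim(G/P_J) = \ell(w) + \dim G.
\]
Quotienting by the $G$-torsor yields $\dim \Ascr_0^w = \ell(w)$ and transfers equi-dimensionality from $(\zeta^{\#})^{-1}(O_w)$ to $\Ascr_0^w$.

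The main obstacle is to pin down the orbit dimension $\dim O_w = \ell(w) + \dim P_J$ in a self-contained way and reconcile it with the conventions of~\cite{PWZ} and~\cite{MW}; once that formula is in hand the rest is routine flat dimension theory. A sanity check is provided by the extremes: for $w = \leftexp{\mu}{w}$ the formula gives the full dimension $\dim(G/P_J) = \dim \Ascr_0$ of the open stratum, while for $w = 1$ we recover the zero-dimensional superspecial stratum of Proposition~\ref{remminbasic}.
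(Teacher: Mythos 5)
Your proof is correct and follows essentially the same route as the paper: both rest on the flatness/universal openness of $\zeta$ (Theorem~\ref{zetaflat}) and the orbit dimension formula from \cite{MW} (your $\dim O_w = \ell(w) + \dim P_J$ is equivalent to the codimension $\dim(\Par_J) - \ell(w)$ of $O^w$ in $X_J$ used by the paper, since $\dim X_J = \dim G$). The only difference is bookkeeping: the paper works with codimensions directly in the quotient stack and invokes that universally open morphisms between regular objects preserve codimension, whereas you pass to the $\Gbar$-torsor cover $\zeta^{\#}\colon\Ascr_0^{\#}\to X_J$ and apply constancy of fiber dimension for flat morphisms between smooth equi-dimensional schemes, which amounts to the same computation.
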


For $p > 2$ this has also been proved by Moonen \cite{Mo2} using the
results of \cite{Mo1} and \cite{We2} and under the assumption that the stratum is non-empty.
In the Siegel case, this corollary has been shown by Oort in \cite{Oo1}, where we use~\eqref{LengthElSeq} to translate his expression for the dimension in terms of elementary sequences into the length of $w$.

\begin{proof}
We know that the $G(\kgbar)$-orbit $O^w$ corresponding to $w \in {}^JW$ has codimension $\dim(\Par_J) - \ell(w)$ in $X_J \otimes \kgbar$ (\cite{MW}). Therefore, the same holds for the substack $[(G \otimes \kgbar)\backslash O^w]$ of $[G_{\kappa} \backslash X_J] \otimes \kgbar$. As $\zeta$ is universally open, it respects codimension and all its fibers $\Ascr_0^w$ are equi-dimensional of dimension
\begin{align*}
\dim(\Ascr_0^w) &= \dim(\Ascr_0) - \codim(\Ascr_0^w,\Ascr_0) =
\dim(\Par_J) - (\dim(\Par_J) - \ell(w))\\
&= \ell(w).\qedhere
\end{align*}
\end{proof}

\subsection{Ekedahl-Oort strata are smooth and quasi-affine} 
For the sake of brevity, we call a level-$n$-truncated $p$-divisible group with \mbox{$\Dscr$-structure} a $\Dscr$-${\rm BT}_n$.

\begin{proposition}\label{EOSmooth}
Each Ekedahl-Oort stratum $\Ascr^w_0$ is smooth.
\end{proposition}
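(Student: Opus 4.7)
The plan is to combine the faithful flatness of $\zeta$ (Theorem~\ref{IntroZetaFlat}) with the smoothness of $\Ascr_0$ to conclude that the scheme-theoretic fibers of $\zeta$ are regular, and then to identify such a fiber over $w$ with the reduced stratum $\Ascr^w_0$. Since the Dieudonn\'e display framework of Section~\ref{sec3} requires $p \geq 3$ (or $pR=0$), I would restrict attention to $p>2$; the case $p=2$ is Vasiu's~\cite{Vasiu_Levelm} and would simply be cited.

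First I would replace $\zeta$ by the smooth cover $\zeta^{\#}\colon \Ascr^{\#}_0 \to X_J$ from Section~\ref{smoothcover}. By smooth descent along the $\Gbar$-torsor $\Ascr^{\#}_0 \to \Ascr_0$ and along the presentation $X_J \to [G_\kappa \backslash X_J]$, smoothness of the locally closed substack $\Ascr_0(w) := \zeta^{-1}(\Gscr(w))$ of $\Ascr_0 \otimes \kgbar$ is equivalent to smoothness of the scheme-theoretic pullback $(\zeta^{\#})^{-1}(O^w)$ in $\Ascr^{\#}_0 \otimes \kgbar$, where $O^w \subseteq X_J \otimes \kgbar$ is the $G(\kgbar)$-orbit attached to $w$ via~\eqref{DescribeXJOrbit}. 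By Remark~\ref{ZJasTorsor} the scheme $X_J$ is smooth of dimension $\dim(G)$, and $O^w$ is a smooth locally closed subscheme of $X_J \otimes \kgbar$ (being a single orbit of a smooth group). Finally, Theorem~\ref{IntroZetaFlat} implies flatness of $\zeta^{\#}$.

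Second I would invoke \cite{EGA}~IV,~(6.5.2): for a flat morphism $f\colon X \to Y$ of locally noetherian schemes and any $x \in X$ with $y := f(x)$, regularity of $X$ at $x$ is equivalent to the simultaneous regularity of $Y$ at $y$ and of the scheme-theoretic fiber $f^{-1}(y)$ at $x$. Applied to $\zeta^{\#}$, whose source is smooth (hence regular) over $\kappa$, this forces every scheme-theoretic fiber of $\zeta^{\#}$ to be regular. Now consider the Cartesian square
\[
\xymatrix{
(\zeta^{\#})^{-1}(O^w) \ar[r]\ar[d] & \Ascr^{\#}_0 \otimes \kgbar \ar[d]^{\zeta^{\#}} \\
O^w \ar[r] & X_J \otimes \kgbar.
}
\]
The left vertical arrow is flat, its scheme-theoretic fibers coincide with those of $\zeta^{\#}$ (hence are regular), and the base $O^w$ is regular. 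Reading the criterion in the converse direction yields regularity of $(\zeta^{\#})^{-1}(O^w)$ at every point; as this scheme is locally of finite type over the perfect field $\kgbar$, it is smooth over $\kgbar$. Descent gives smoothness of $\Ascr_0(w)$.

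The identification $\Ascr_0(w) = \Ascr^w_0$ recalled in Remark~\ref{SchemeEO}, which holds for $p > 2$, then completes the argument. The main obstacle is precisely this last identification at $p=2$: in that case the Dieudonn\'e display inputs of Section~\ref{sec3} do not apply, and one has to invoke Vasiu's theorem directly. All remaining ingredients — flatness of $\zeta$, smoothness of $\Ascr_0$ and of $X_J$, and the EGA criterion — are insensitive to the residue characteristic.
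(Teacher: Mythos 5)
The argument breaks down at the step where you invoke EGA~IV,~(6.5.2). The criterion you quote is not an equivalence in the direction you need. What EGA actually gives for a flat local homomorphism $A \to B$ of noetherian local rings is: (a) $A$ regular and $B/\mathfrak{m}_A B$ regular together imply $B$ regular; and (b) $B$ regular implies $A$ regular. It does \emph{not} say that $B$ regular and $A$ regular together force the fiber $B/\mathfrak{m}_A B$ to be regular, and that is exactly the implication you rely on when you write ``this forces every scheme-theoretic fiber of $\zeta^{\#}$ to be regular.'' A quick counterexample: the Frobenius $\mathbb{A}^1_k \to \mathbb{A}^1_k$, $x \mapsto x^p$, over a field $k$ of characteristic $p$ is finite flat with regular source and target, yet the fiber over the origin is $\Spec k[x]/(x^p)$, which is not regular. (Similarly $k[[t]] \to k[[x,y]]$, $t \mapsto x^2-y^2$, is flat between regular local rings but the closed fiber is a nodal curve.) So flatness of $\zeta$ together with regularity of $\Ascr_0^{\#}$ and of $X_J$ gives you nothing about the scheme-theoretic fibers of $\zeta^{\#}$, and the Cartesian-square step collapses.

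What you would actually need is smoothness, not flatness, of the morphism $\Ascr_0 \to (\text{classifying stack})$. This is precisely the route the paper takes, but via a different morphism: instead of $\zeta\colon \Ascr_0 \to \DZip$, it uses the morphism $\Psi\colon \Ascr_0 \to \Bscr\Tscr_{\Dscr,1}$ to the stack of truncated Barsotti-Tate groups with $\Dscr$-structure, whose smoothness for $p>2$ is established in~\cite{We2}. Smoothness of $\Psi$, together with smoothness of the residue gerbe of a point of $\Bscr\Tscr_{\Dscr,1}$, gives smoothness of $\Psi^{-1}(\Bscr\Tscr^w_{\Dscr,1})$; this is then identified with $\Ascr^w_0$ using that the latter is reduced by definition. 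Your proposal attempts to substitute the (weaker) flatness statement of Theorem~\ref{zetaflat} for the smoothness input, and this substitution does not suffice. If you want to keep a $\zeta$-based argument, you would have to prove that $\zeta$ is smooth, which is a genuinely stronger statement than flatness and is not established in the paper; absent that, the appeal to Vasiu's theorem or to the smoothness of $\Psi$ from~\cite{We2} is unavoidable.
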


This result has been proved by Vasiu (for arbitrary $p$) in \cite{Va}~Theorem~5.3.1. For the convenience of the reader we include a quick proof for $p > 2$.

\begin{proof}[Proof of Proposition \ref{EOSmooth} for $p > 2$]
Let $\Bscr\Tscr_{\Dscr,1}$ be the algebraic stack of $\Dscr$-${\rm BT}_1$s over $\kgbar$ (see \cite{We2}~(1.7) and~(6.3) for its definition). For $w \in \leftexp{J}{W}$ let $\Mline^w$ by a $\Dscr$-zip over $\kgbar$ whose isomorphism class corresponds to $w$ under the bijection~\eqref{IsoClassesDZip}. Let $X^w$ be the corresponding $\Dscr$-${\rm BT}_1$. Then $X^w$ defines a point $\Spec \kgbar \to \Bscr\Tscr_{\Dscr,1}$. Its residue gerbe (in the sense of \cite{LM}~(11.1)) is the smooth locally closed substack $\Bscr\Tscr^w_{\Dscr,1}$ classifying $\Dscr$-${\rm BT}_1$s that are locally for the fppf-topology isomorphic to $X^w$.

We attach to a point $\APlus$ of $\Ascr_0$ the $p$-torsion of $A$ endowed with the $O_B$-action induced by $\iota$ and the isomorphism $A[p] \iso A[p]\vdual$ induced by $\lambda$. This yields a morphism $\Psi\colon \Ascr_0 \to \Bscr\Tscr_{\Dscr,1}$ which is smooth if $p > 2$ (\cite{We2}~(6.4)). Therefore $\Psi^{-1}(\Bscr\Tscr^w_{\Dscr,1})$ is smooth for $p > 2$. Now $\Ascr_0^w$ and $\Psi^{-1}(\Bscr\Tscr^w_{\Dscr,1})$ have the same underlying topological space because they have the same $k$-valued points for every perfect field extension $k$ of $\kgbar$. As $\Ascr^w_0$ is reduced by definition, we obtain that $\Ascr^w_0 = \Psi^{-1}(\Bscr\Tscr^w_{\Dscr,1})$ is smooth.
\end{proof}

\begin{remark}\label{LinearChar2}
Assume that $(O_B,\star)$ consists only of factors of type (AL) (Remark~\ref{CasesOB}). In this case the formal smoothness of morphisms between deformations of truncated $p$-divisible groups with $\Dscr$-structure in~\cite{We2}, which is used for the smoothness of $\Psi$ in the proof of Proposition~\ref{EOSmooth}, can be reformulated into a statement about formal smoothness of morphisms between deformations of truncated $p$-divisible groups with $O_K$-action ($K$ an unramified extension of $\QQ_p$) but without polarization. In this case the argument in~\cite{We2} does not use $p > 2$ and the morphism $\Psi$ in the proof of Proposition~\ref{EOSmooth} is also smooth for $p = 2$.
\end{remark}

We prove now that every Ekedahl-Oort stratum is quasi-affine. In the Siegel case this has been shown by Oort in~\cite{Oo1}. Our argument in the general case is similar but instead of working with canonical filtrations we use a result of Vasiu. We start with the following lemma.

\begin{lemma}\label{QuasiaffineLocal}
Let $g\colon X \to Y$ be a finite locally free surjective morphism of schemes. Then $X$ is quasi-affine if and only if $Y$ is quasi-affine.
\end{lemma}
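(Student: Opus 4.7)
The plan is to reduce both implications to the standard ampleness characterization of quasi-affineness: a quasi-compact quasi-separated scheme $Z$ is quasi-affine if and only if there exist finitely many global sections $s_1,\dots,s_n \in \Gamma(Z,\mathcal{O}_Z)$ such that the non-vanishing loci $Z_{s_i}$ are affine and cover $Z$ (EGA II, 5.1). Before applying this, I would note that quasi-compactness and quasi-separatedness transfer freely between $X$ and $Y$ in both directions: $Y$ is quasi-compact as the image of the quasi-compact scheme $X$, and $Y$ is quasi-separated by faithfully flat quasi-compact descent along $g$ (which is fpqc); conversely, a finite morphism is affine and hence separated, so these properties lift from $Y$ to $X$ as well.

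For the direction ``$Y$ quasi-affine $\Rightarrow$ $X$ quasi-affine'', I would fix sections $f_1,\dots,f_n \in \Gamma(Y,\mathcal{O}_Y)$ with the $Y_{f_i}$ affine and covering $Y$, and pull them back along $g$. Since $g$ is finite, hence affine, each preimage $g^{-1}(Y_{f_i}) = X_{g^\#(f_i)}$ is affine; since $g$ is surjective these cover $X$. This exhibits $\mathcal{O}_X$ as ample and hence $X$ as quasi-affine.

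The reverse direction is the substantive one. Here I would invoke the classical result that ampleness of an invertible sheaf descends along finite surjective morphisms: if $g\colon X \to Y$ is finite surjective and $\mathcal{L}$ is an invertible sheaf on $Y$, then $\mathcal{L}$ is ample on $Y$ if and only if $g^*\mathcal{L}$ is ample on $X$ (EGA II, 6.6.3; see also Hartshorne, Exercise III.5.7 in the Noetherian case). Applying this to $\mathcal{L} = \mathcal{O}_Y$ and using $g^*\mathcal{O}_Y = \mathcal{O}_X$, the hypothesis that $X$ is quasi-affine (i.e., $\mathcal{O}_X$ is ample) immediately gives that $\mathcal{O}_Y$ is ample, whence $Y$ is quasi-affine.

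The only genuinely non-trivial input is the EGA descent of ampleness along finite surjective morphisms in the hard direction; everything else amounts to formal manipulations with affine morphisms and the cohomological criterion for quasi-affineness. The finite locally free hypothesis on $g$ is used exactly to ensure the two properties exploited above, namely that $g$ is affine (for transferring basic opens in the easy direction) and finite surjective (for invoking the descent result).
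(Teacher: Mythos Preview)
Your proof is correct and follows essentially the same route as the paper: both arguments dispose of the easy direction quickly and, for the hard direction, transfer quasi-compactness and (quasi-)separatedness from $X$ to $Y$ and then invoke \cite{EGA}~II~(6.6.3) to descend ampleness of the structure sheaf along $g$. One small remark: the EGA result you cite actually uses the full finite locally free surjective hypothesis (not merely finite surjective), so your closing comment that local freeness is only used to guarantee affineness slightly undersells where the flatness enters.
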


\begin{proof}
The condition on $Y$ is clearly sufficient. Now assume that $X$ is quasi-affine. As $X$ is quasi-compact and separated and as $g$ is surjective and universally closed, $Y$ is quasi-compact and separated. Moreover $\Oscr_X = g^*\Oscr_Y$ is ample by hypothesis. Therefore $\Oscr_Y$ is ample because $g$ is finite locally free surjective (\cite{EGA}~II~(6.6.3)). This shows that $Y$ is quasi-affine.
\end{proof}

\begin{theorem}\label{EOQuasiAffine}
Assume that $\Ascr_0$ is a scheme. Then every Ekedahl-Oort stratum $\Ascr^w_0$ ($w \in \leftexp{J}{W}$) is quasi-affine.
\end{theorem}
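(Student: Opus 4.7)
The plan is to produce an ample line bundle $\omega$ on $\Ascr_0$ whose restriction to each Ekedahl-Oort stratum $\Ascr_0^w$ is torsion; this alone implies $\Ascr_0^w$ is quasi-affine. Indeed, $\omega$ remains ample after restriction to any locally closed subscheme of $\Ascr_0$, so if $\omega^N|_{\Ascr_0^w}\cong \Oscr_{\Ascr_0^w}$ for some $N\ge 1$, then $\Oscr_{\Ascr_0^w}$ is ample, which is equivalent to $\Ascr_0^w$ being quasi-affine.

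For the ample line bundle, take the Hodge line bundle $\omega:=\det \pi_*\Omega^1_{A/\Ascr_0}$ of the universal abelian scheme $\pi\colon A\to \Ascr_0$. Its ampleness on $\Ascr_0$ is a consequence of the theory of the minimal (Baily-Borel) compactification: the Hodge bundle extends to an ample line bundle on a projective compactification of $\Ascr_0$, due to Chai-Faltings in the Siegel case and to K.-W. Lan in the general PEL case.

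For the torsion statement, the key observation is that $\omega$ is the pullback via $\zeta$ of a line bundle on the zip stack. On the smooth cover $\Ascrtilde_0\to \Ascr_0$ the trivialization $\alpha$ of $H_1^{\DR}(A)$ turns the Hodge filtration into the tautological filtration pulled back from $\Par_J$; correspondingly $\omega$ pulls back to the $G$-equivariant line bundle on $X_J$ obtained as the pullback from $\Par_J$ of the line bundle associated with the determinant character $\chi$ of $P_J$ acting on $\Lambda/C_{P_J}$. This $G$-equivariant structure descends $\omega$ to a line bundle $\tilde\omega$ on $[G_\kappa\backslash X_J]$ with $\omega\cong \zeta^*\tilde\omega$. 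Restricted to the residue gerbe $\Gscr(w)\cong BH_w$ of the point $w$ (cf.\ Remark~\ref{SchemeEO}), $\tilde\omega$ corresponds to a character $\psi_w$ of the automorphism group $H_w$ of a $\Dscr$-zip over $\kgbar$ of type $w$. Using the explicit description of $H_w$ from \cite{PWZ}, the identity component $H_w^\circ$ is an extension of a unipotent group by a subgroup of a Levi of $P_J$ that lies in the kernel of $\chi$ up to finite index; hence $\psi_w$ has finite order, so $\tilde\omega^N|_{\Gscr(w)}$ is trivial and $\omega^N|_{\Ascr_0^w}$ is trivial for a suitable $N$.

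The main obstacle is the last step: explicitly identifying the character $\psi_w$ of $H_w$ and verifying its torsion-ness requires the detailed analysis of stabilizers in $X_J$ from \cite{PWZ}. An alternative and more direct route, which is presumably what the authors have in mind given their remark that they use a result of Vasiu, is to invoke Vasiu's isotriviality result \cite{Vasiu_Levelm}: the universal $\Dscr$-zip on $\Ascr_0^w$ becomes constant after a suitable finite surjective base change, directly trivializing $\omega$ on that cover and hence producing the required torsion line bundle on $\Ascr_0^w$ itself.
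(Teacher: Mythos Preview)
Your alternative route (b) is exactly what the paper does: invoke Vasiu's result (\cite{Va}~Theorem~5.3.1) to obtain a finite locally free surjective cover $g\colon S\to \Ascr_0^w$ over which the universal $\Dscr$-zip, and hence the Hodge bundle $\omega$, becomes trivial, and combine this with the ampleness of $\omega$ from Lan's work. The only cosmetic difference is in the endgame: the paper does not pass through ``$\omega$ is torsion on $\Ascr_0^w$'' via a norm argument, but instead observes directly that $g^*\omega$ is both ample (pullback of ample along a finite morphism) and trivial, so $S$ is quasi-affine, and then descends quasi-affineness along $g$ using the elementary Lemma~\ref{QuasiaffineLocal}. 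Your norm argument also works, of course, since $g$ is finite locally free.

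Your primary approach (a), descending $\omega$ to a line bundle $\tilde\omega$ on $[G_\kappa\backslash X_J]$ and arguing that its restriction to each residue gerbe $BH_w$ corresponds to a finite-order character, is genuinely different and in the spirit of later work on group-theoretic Hasse invariants. The descent $\omega\cong\zeta^*\tilde\omega$ is clear. However, your justification for the finiteness of the character $\psi_w$ is too sketchy as stated: the stabilizers $H_w$ are positive-dimensional (of dimension $\dim\Par_J-\ell(w)$), and one really needs the structural result from \cite{PWZ} that the reductive quotient of $H_w^\circ$ arises as Frobenius-fixed points of a Levi, hence is finite, so that every character of $H_w$ is torsion. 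You correctly flag this as the main obstacle; it is surmountable, but filling it in requires more than you have written.
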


\begin{proof}
Let $\APlus$ be the restriction of the universal family over $\Ascr_0$ to $\Ascr^w_0$ and let $\Mline = (M,C,D,\varphi_0,\varphi_1)$ be its attached $\Dscr$-zip. By \cite{Va}~Theorem~5.3.1, we find a finite locally free surjective morphism $g\colon S \to \Ascr^w_0$ such that $g^*\Mline$ is isomorphic to a constant $\Dscr$-zip. In particular, $g^*M$ and $g^*C$ are globally free $\Oscr_S$-modules. By the definition of $C$ this shows that $g^*f_*\Omega^1_{A/\Ascr^w_0}$ is a globally free $\Oscr_S$-module, where $f\colon A \to \Ascr^w_0$ denotes the structure morphism. Setting $\omega_A := \det(f_*\Omega^1_{A/\Ascr^w_0})$ we see that $g^*\omega_A$ is a trivial line bundle. On the hand we know by \cite{Lan}~Theorem~7.2.4.1 that $\omega_A$ is ample. Thus $g^*\omega_A$ is ample and $S$ is quasi-affine. Therefore $\Ascr^w_0$ is quasi-affine by Lemma~\ref{QuasiaffineLocal}.
\end{proof}

\begin{remark}
In~\cite{Va}~Theorem~5.3.1 it is shown that the inclusion $\Ascr^w_0 \mono \Ascr_0$ is quasi-affine. This also follows from Theorem~\ref{EOQuasiAffine} or from \cite{PWZ}~Theorem~12.7. In~\cite{NVW}~Theorem~6.3 (for $p \geq 5$) and in~\cite{Yatsyshyn} (in general) the much stronger result is shown that each Ekedahl-Oort stratum $\Ascr^w_0$ is pure in $\Ascr_0$ (i.e., the inclusion $\Ascr^w_0 \mono \Ascr_0$ is an affine morphism). This result neither implies nor is implied by Theorem~\ref{EOQuasiAffine}.
\end{remark}


\section{Non-emptiness of Newton strata}
The previous results allow us to deduce the non-emptiness of all Newton strata which has been conjectured by Fargues (\cite{Fargues} Conjecture 3.1.1) and Rapoport (\cite{RapoportGuide}~Conjecture~7.1). 

\begin{theorem}\label{NewtonNonEmpty}
For all $b \in B(G,\mu)$ the Newton stratum $\Ncal_b$ in $\Ascr_{\Dscr}$ is non-empty.
\end{theorem}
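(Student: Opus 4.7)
Fix $b \in B(G,\mu)$. The plan is to produce a $\kgbar$-point of $\Ascr_0$ with Newton point $b$ by combining Theorem~\ref{thmexfundalc}, Theorem~\ref{thmEOnonempty} applied in an auxiliary moduli space, and the isogeny principle of Lemma~\ref{IsogAV}.

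By Theorem~\ref{thmexfundalc} there is a fundamental $x \in \widetilde W$ with $\dbrack{\dot x} = b$ in $B(G)$ and with $x \in W\mu'$ for some minuscule cocharacter $\mu' \in X_*(T)$. In general $\mu'$ need not be $W$-conjugate to the Hodge cocharacter $\mu$ of $\Dscr$, so the first step is to construct an auxiliary PEL Shimura datum $\Dscr' = (B,\star,V,\lrangle',O_B,\Lambda,h')$ which agrees with $\Dscr$ on the data $(B,\star,V,O_B,\Lambda)$ but has a different rational symplectic form $\lrangle'$ and Hodge structure $h'$, such that the reductive $\ZZ_p$-group scheme associated with $\Dscr'$ is canonically isomorphic to $G$ and the Hodge cocharacter $\mu_{h'}$ is $G(\CC)$-conjugate to $\mu'$. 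Via the case analysis of Remark~\ref{CasesOB} and the explicit description recorded in Remark~\ref{StructureGbf}, this amounts to a case-by-case verification that every minuscule cocharacter of $G_{\QQ_p}$ produced by Theorem~\ref{thmexfundalc} lifts to a Hodge cocharacter on $V_{\RR}$ satisfying the positivity condition.

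Having constructed $\Dscr'$, the moduli space $\Ascr'_0$ and the associated morphism $\zeta'$ are defined as in Section~\ref{ModSpaceAbVar}, and its Ekedahl-Oort strata are indexed by $\leftexp{J'}{W}$ with $J'$ the type of $\mu'$. By Theorem~\ref{thmEOnonempty} applied to $\Dscr'$, the stratum $(\Ascr'_0)^{w'}$ with $w' := {\rm tc}(\dbrack{\dot x}) \in \leftexp{J'}{W}$ (see Remark~\ref{DZipToPDiv}) is non-empty. Because $x$ is fundamental, Remark~\ref{remFundAlc} implies that every representative $\dot x$ is minimal, so that the $p$-divisible group with $\Dscr'$-structure attached to every point of $(\Ascr'_0)^{w'}$ lies in the single isomorphism class $\dbrack{\dot x} \in C(G,\mu')$, whose image in $B(G)$ is $b$. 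Pick any $\kgbar$-point $(A',\iota',\lambda',\eta')$ of this stratum. On the $\Dscr$-side, the hypothesis $b \in B(G,\mu)$ together with the theorem of Kottwitz-Rapoport, Lucarelli and Gashi (Section~\ref{CGBG}) gives $y \in K\mu(p)K \cap b$; let $(X,\iota,\lambda)$ be the corresponding $p$-divisible group with $\Dscr$-structure of class $\dbrack{y} \in C(G,\mu)$. A choice of $g \in G(L)$ with $g^{-1}\dot x \sigma(g) = y$ induces under the chosen Dieudonn\'e trivializations a $B$-linear quasi-isogeny $\rho\colon A'[p^\infty] \to X$ compatible with the polarizations up to a scalar in $L^\times$; after multiplication by a suitable power of $p$ we obtain an $O_B$-linear isogeny compatible with the polarizations up to a power of $p$.

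Applying Lemma~\ref{IsogAV} to $(A',\iota',\lambda',\eta')$ and $\rho$ then yields a $\kgbar$-point $\APlus \in \Ascr_0(\kgbar)$ with $A[p^\infty] \cong (X,\iota,\lambda)$, which by construction lies in $\Ncal_b$, completing the proof. The principal obstacle is the construction of $\Dscr'$ in the first paragraph: one must verify for each of the cases (AL), (AU), (C) of Remark~\ref{CasesOB} that the minuscule cocharacters $\mu'$ produced by Theorem~\ref{thmexfundalc} are indeed realizable as Hodge cocharacters of a PEL Shimura datum whose associated $\ZZ_p$-group coincides with $G$, with the positivity condition on $h'$ arranged by an appropriate choice of rational symplectic form.
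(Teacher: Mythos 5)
Your proposal follows the same overall route as the paper's proof: use Theorem~\ref{thmexfundalc} to produce a fundamental element $x$ with a possibly different minuscule Hodge type $\mu'$, pass to an auxiliary PEL datum $\Dscr'$ with $[\mu(\Dscr')] = [\mu']$ but the same $\ZZ_p$-group $G$, apply Theorem~\ref{thmEOnonempty} to get a nonempty (minimal) Ekedahl--Oort stratum in $\Ascr'_0$ lying in $\Ncal'_b$, and then transport back to $\Ascr_0$ via Lemma~\ref{IsogAV}. The skeleton is correct.

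The gap is in how you treat the existence of the auxiliary datum $\Dscr'$. You describe it as a ``case-by-case verification that every minuscule cocharacter of $G_{\QQ_p}$\dots lifts to a Hodge cocharacter on $V_{\RR}$ satisfying the positivity condition'' and say the positivity can be ``arranged by an appropriate choice of rational symplectic form.'' This understates the difficulty. That a real PEL datum with the prescribed minuscule Hodge cocharacter exists is indeed routine (this is the classification of PEL data over $\RR$, cf.~\cite{Ko_ShFin}~\S4). The genuine content is the existence of a \emph{rational} skew-hermitian $(B,\star)$-form $\lrangle'$ on $V$ that is isomorphic to $\lrangle$ over $\QQ_v$ for all $v \notin \Sigma$ (in particular at $p$, so the $\ZZ_p$-model $G$, the lattice $\Lambda$, and unramifiedness at $p$ are preserved) while having the required archimedean signature. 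Since such forms are classified by $H^1(\QQ,\Gbf)$, this is a local--global existence question, and the paper resolves it by proving that the map
\[
\Sha^{\Sigma}(\QQ,\Gbf) \lto \prod_{v\mid\infty}H^1(\QQ_v,\Gbf)
\]
is surjective (Condition~(S)). That surjectivity requires Borovoi's abelianized Galois cohomology, the reduction to the torus quotient $D=\Gbf/\Gbf^{\mathrm{der}}$ via Lemma~\ref{SProp}, and an explicit norm computation with units of $O_{F_0}$ for the case~(A) torus $D_0$; see the proof of Lemma~\ref{ChangeMu}. None of this is captured by a case check on minuscule cocharacters, so in your write-up the decisive step remains unproved. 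Once you insert Lemma~\ref{ChangeMu} (or a proof of Condition~(S)), the rest of your argument goes through as in the paper.
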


\begin{theorem}\label{IntegralManin}
For every algebraically closed field extension $k$ of $\kappa$ and every $p$-divisible group with $\Dscr$-structure $(X_0,\iota_0,\lambda_0)$ there exists $\APlus \in \Ascr_0(k)$ such that $(X_0,\iota_0,\lambda_0)$ is isomorphic to $(A,\iota,\lambda)[p^{\infty}]$ (as $p$-divisible groups with $\Dscr$-structure).
\end{theorem}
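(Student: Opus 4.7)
The strategy is to leverage the non-emptiness of Newton strata (Theorem~\ref{NewtonNonEmpty}) and then apply the standard lifting principle of Lemma~\ref{IsogAV} in order to pass from the isogeny class to the isomorphism class. Concretely, I would first associate to $(X_0,\iota_0,\lambda_0)$ its class $b \in B(G,\mu)$ via covariant Dieudonn\'e theory. Theorem~\ref{NewtonNonEmpty} guarantees that $\Ncal_b \subseteq \Ascr_0$ is non-empty, and since $\Ncal_b$ is of finite type over $\kappa$ and $k$ is an algebraically closed extension of $\kappa$, this yields a $k$-valued point $(A',\iota',\lambda',\eta') \in \Ncal_b(k)$. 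Writing $(X',\iota',\lambda') := A'[p^{\infty}]$ with its induced $\Dscr$-structure, both $X_0$ and $X'$ then have the same image $b \in B(G,\mu)$.

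Next, I would produce an $O_B$-linear isogeny $\rho \colon X' \to X_0$ of $p$-divisible groups with $\Dscr$-structure (that is, compatible with the polarizations up to a power of $p$, in the sense of the definition following Example~\ref{DStructurePDiv}). After trivializing the covariant Dieudonn\'e modules of $X_0$ and $X'$ as $\Dscr$-modules over $W(k)$ via Lemma~\ref{LocalIsom}, their Frobenii become elements of $G(L)$ that are $G(L)$-$\sigma$-conjugate. Any conjugating element $g \in G(L)$ defines an $O_B$-linear symplectic similitude of the corresponding $\Dscr$-isocrystals; rescaling by a suitable power of $p$ turns this into an inclusion of Dieudonn\'e lattices, and hence, by covariant Dieudonn\'e theory, into the desired isogeny $\rho$. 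Finally, I would feed $(A',\iota',\lambda',\eta')$, the target $(X_0,\iota_0,\lambda_0)$, and $\rho$ into Lemma~\ref{IsogAV} with $\Dscr' = \Dscr$. The lemma then directly delivers a $k$-valued point $\APlus \in \Ascr_0(k)$ whose associated $p$-divisible group with $\Dscr$-structure is isomorphic to $(X_0,\iota_0,\lambda_0)$, which is exactly the statement.

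The genuinely deep input is Theorem~\ref{NewtonNonEmpty}; everything else is routine. The only mild technical subtlety is ensuring that the rescaled map has similitude factor of non-negative $p$-adic valuation, so that the compatibility of polarizations holds up to a \emph{non-negative} power of $p$. This is easy to arrange since the polarizations in a $\Dscr$-structure are defined only up to $\ZZ_p^{\times}$, and one is free to choose the scaling of the conjugating element $g$ (and, if convenient, its direction) accordingly.
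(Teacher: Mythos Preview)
Your proposal is correct and follows essentially the same route as the paper: take the Newton point $b$ of $(X_0,\iota_0,\lambda_0)$, invoke Theorem~\ref{NewtonNonEmpty} to obtain a $k$-point of $\Ncal_b$, produce an isogeny between the two $p$-divisible groups with $\Dscr$-structure, and conclude via Lemma~\ref{IsogAV}. The paper's write-up is terser (it simply asserts the existence of the isogeny once both objects lie in the same class $b$), while you spell out the Dieudonn\'e-theoretic construction and the scaling needed to make the similitude factor a non-negative power of $p$, but the argument is the same.
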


We first show that both theorems are equivalent.

\begin{proof}[Proof of the equivalence of Theorem~\ref{NewtonNonEmpty} and Theorem~\ref{IntegralManin}]
The map \mbox{$C(G) \to B(G)$} induces a surjection $C(G,\mu) \to B(G,\mu)$ (Section~\ref{CGBG}). Thus the non-emptiness of all Newton strata means that for every $p$-divisible group $(X_0,\iota_0,\lambda_0)$ with $\Dscr$-structure over $\kgbar$ there exists $\APlus \in \Ascr_0(\kgbar)$ such that $(A,\iota,\lambda)[p^{\infty}]$ and $(X_0,\iota_0,\lambda_0)$ are isogenous (as $p$-divisible groups with $\Dscr$-structure). Thus Theorem~\ref{IntegralManin} implies Theorem~\ref{NewtonNonEmpty}.

Conversely, let $(X_0,\iota_0,\lambda_0)$ be a $p$-divisible group with $\Dscr$-structure and let $b \in B(G,\mu)$ be its Newton point. If Theorem~\ref{NewtonNonEmpty} holds, there exists $\APlusOne \in \Ascr_0(k)$ and an isogeny of $p$-divisible groups with $\Dscr$-structure $f\colon (X_0,\iota_0,\lambda_0) \to (A_1,\iota_1,\lambda_1)[p^{\infty}]$. Then Theorem~\ref{IntegralManin} follows by Lemma~\ref{IsogAV}.
\end{proof}

For the proof of Theorem~\ref{NewtonNonEmpty} we need some preparations. Let $K$ be a number field and let $H$ be a reductive (and hence connected) group over $K$. Let $\Sigma$ be a finite set of places of $K$ containing all archimedean places. We let $\Sha^{\Sigma}(K,H)$ be the kernel of the canonical map $H^1(K,H) \to \prod_{v \notin\Sigma}H^1(K_v,H)$, where $v$ runs through all places of $K$ which are not in $\Sigma$. Let $\lambda = \lambda_H$ be the restriction of the canonical map $H^1(K,H) \to \prod_{v \mid \infty}H^1(K_v,H)$ to $\Sha^{\Sigma}(K,H)$. Consider the following condition.
\begin{spacedlist}
\item[(S)]
The map
\[
\lambda = \lambda_H\colon \Sha^{\Sigma}(K,H) \to \prod_{v \mid \infty}H^1(K_v,H)
\]
is surjective.
\end{spacedlist}
If $\Sigma$ and $\Sigma'$ are finite sets of places as above with $\Sigma \subseteq \Sigma'$ and $(H,\Sigma)$ satisfies Condition~(S), then $(H,\Sigma')$ satisfies Condition~(S).


\begin{lemma}\label{SProp}
Let $H$ be a reductive group over a number field $K$ and let $\Sigma$ be a finite set of places containing all archimedean places.
\begin{assertionlist}
\item
Let $L$ be a finite extension of $K$, let $H = \Res_{L/K}H_0$ be the restrictions of scalars from a reductive algebraic group $H_0$ over $L$ and let $\Sigma_0$ be the set of all places of $L$ lying over some place in $\Sigma$. Then $(H_0,\Sigma_0)$ satisfies Condition~(S) if and only if $(H,\Sigma)$ satisfies Condition~(S).
\item
If $H$ is simply connected, then Condition~(S) is satisfied for all $\Sigma$.
\item
Let the derived group $H^{\der}$ of $H$ be simply connected and set $D := H/H^{\der}$. If $(D,\Sigma)$ satisfies Condition~(S), then $(H,\Sigma)$ satisfies Condition~(S).
\end{assertionlist}
\end{lemma}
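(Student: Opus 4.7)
I would prove the three parts independently. Part~(1) is a formal consequence of Shapiro's lemma for non-abelian cohomology, part~(2) reduces to the Hasse principle for simply connected semisimple groups together with Kneser's vanishing theorem, and the main work lies in part~(3), which is a diagram chase on the short exact sequence
\[
1 \to H^{\der} \to H \to D \to 1.
\]

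\emph{Part~(1).} For any place $v$ of $K$ one has $K_v \otimes_K L = \prod_{w \mid v} L_w$, so Shapiro's lemma gives canonical bijections $H^1(K, \Res_{L/K} H_0) \iso H^1(L, H_0)$ and $H^1(K_v, \Res_{L/K} H_0) \iso \prod_{w \mid v} H^1(L_w, H_0)$, compatible with localization. Under these identifications, $\Sha^\Sigma(K, H) = \Sha^{\Sigma_0}(L, H_0)$, $\prod_{v \mid \infty} H^1(K_v, H) = \prod_{w \mid \infty} H^1(L_w, H_0)$, and $\lambda_H$ is identified with $\lambda_{H_0}$, so Condition~(S) is equivalent for the two pairs.

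\emph{Part~(2).} Kneser's theorem gives $H^1(K_v, H) = 0$ for every non-archimedean place $v$. Since $\Sigma$ contains all archimedean places, this forces $\Sha^\Sigma(K, H) = H^1(K, H)$. The Hasse principle of Kneser--Harder--Chernousov then states that $H^1(K, H) \to \prod_{v \text{ real}} H^1(K_v, H)$ is bijective. Combined with the vanishing $H^1(K_v, H) = 0$ at complex $v$, this is precisely the map $\lambda_H$, which is therefore bijective and in particular surjective.

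\emph{Part~(3) and main obstacle.} The key inputs are that $H^{\der}$ and all its inner forms are simply connected (so part~(2) applies to each of them), and that Condition~(S) is assumed for $D$. For every non-archimedean place $v$, simple connectedness of $H^{\der}$ forces $\pi_1(H) = X_*(D)$, and Kottwitz's description of $H^1$ of connected reductive groups over $p$-adic fields yields a bijection $H^1(K_v, H) \iso H^1(K_v, D)$. Consequently $\Sha^\Sigma(K, H)$ is exactly the preimage in $H^1(K, H)$ of $\Sha^\Sigma(K, D)$. Given $\alpha \in \prod_{v \mid \infty} H^1(K_v, H)$, project to $\beta \in \prod_{v \mid \infty} H^1(K_v, D)$ and lift to $\bar d \in \Sha^\Sigma(K, D)$ via (S) for $D$. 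The main obstacle is to lift $\bar d$ to some $h_0 \in H^1(K, H)$: since the sequence above is not central, there is no straightforward $H^2$-obstruction in non-abelian cohomology. The standard remedy is Borovoi's abelianized cohomology (or, equivalently, a Sansuc-type argument), which reduces the global obstruction to an $H^2$-class of the central torus of $H^{\der}$; this class vanishes thanks to the Hasse principle for $H^2$ of tori together with the local vanishings $H^1(K_v, H^{\der}) = 0$ at non-archimedean $v$. Finally, $\lambda_H(h_0)$ and $\alpha$ share the image $\beta$ in $\prod_{v \mid \infty} H^1(K_v, D)$, so they differ by a family of classes of torsors under the inner form ${}^{h_0} H^{\der}$, which remains simply connected. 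Applying part~(2) to ${}^{h_0} H^{\der}$ yields a global class in $\Sha^\Sigma(K, {}^{h_0} H^{\der})$ with the prescribed archimedean image; twisting $h_0$ by it produces the desired preimage of $\alpha$ in $\Sha^\Sigma(K, H)$, establishing Condition~(S) for $H$.
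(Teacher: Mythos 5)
Parts (1) and (2) coincide with the paper's proof (Shapiro's lemma, and the surjectivity of the global-to-archimedean localization together with the vanishing of $H^1$ at finite places for simply connected groups). Part (3) is where you diverge, and your route is genuinely different but, after unwinding, correct.

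The paper works entirely inside the exact sequence $1 \to H^{\der} \to H \to D \to 1$ of pointed sets. It uses Borovoi's Theorem~5.7 to get surjectivity of $\pi\colon H^1(K,H)\to H^1(K,D)$, produces an $x\in\Sha^\Sigma(K,H)$ and a $z$ in the image of $\Sha^\Sigma(K,H^{\der})$ with $\lambda(x)-\lambda(z)=y$, and then faces the problem that $H^1(K,H)$ is not a group so ``$x-z$'' is meaningless. Its remedy is Borovoi's Theorem~5.11: choose a maximal torus $T\subset H$ with both $x$ and $z$ in the image of $H^1(K,T)$, and form the difference there; because all localization maps factoring through $H^1(K,T)$ are group homomorphisms, the resulting class lies in $\Sha^\Sigma(K,H)$ and has archimedean image $y$. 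Your version avoids Theorem~5.11 entirely. Instead you invoke Kottwitz's local duality (specifically $H^1(K_v,H)\cong H^1(K_v,D)$ at non-archimedean $v$ because $\pi_1(H)=X_*(D)$) to see that $\Sha^\Sigma(K,H)$ is the \emph{full} preimage of $\Sha^\Sigma(K,D)$, lift a class $\bar d\in\Sha^\Sigma(K,D)$ to $h_0\in H^1(K,H)$ via Borovoi~5.7, note $h_0$ is automatically in $\Sha^\Sigma$, and then correct the archimedean fibers by twisting with a class of the inner form ${}^{h_0}H^{\der}$ supplied by part~(2). The twist changes nothing at $v\notin\Sigma$ because $H^1(K_v,{}^{h_0}H^{\der})=0$ there, so the end result stays in $\Sha^\Sigma(K,H)$. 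This is a clean alternative: you trade the maximal-torus trick for the local Kottwitz isomorphism plus the twisting formalism, which arguably makes the structure of the argument more transparent at the cost of citing a stronger local input. (The paper already tacitly uses the functorial abelian group structure on $H^1(K_v,H)$, which rests on the same local duality, so the overall machinery is comparable.)

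One small inaccuracy: you locate the lifting obstruction in ``an $H^2$-class of the central torus of $H^{\der}$.'' But $H^{\der}$ is semisimple, so it has no central torus; its center is a finite group scheme. The correct statement you are reaching for is exactly Borovoi's Theorem~5.7 (or the Sansuc argument via z-extensions), and the reason that theorem works has nothing to do with a central torus of $H^{\der}$. This is a slip in the heuristic, not in the structure of the argument, since the cited result is the right one — but as written the explanation would mislead a reader.
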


\begin{proof}
The first assertion follows from Shapiro's lemma. Assertion~(2) follows from the fact that for every reductive group $H$ over a number field $K$ the canonical morphism
\begin{equation}\label{InftySurjective}
H^1(K,H) \to \prod_{v \mid \infty}H^1(K_v,H)
\end{equation}
is surjective (\cite{PR_AGNT}~Prop.~6.17) and that for simply connected groups $H^1(K_v,H) = 0$ for all finite places $v$ of $K$  (\cite{PR_AGNT}~Theorem~6.4) which implies $\Sha^{\Sigma}(K,H) = H^1(K,H)$. In fact, as simply connected groups also satisfy the Hasse principle, $\lambda_H$ is bijective if $H$ is simply connected.

To show assertion~(3) we consider the following commutative diagram
\[\xymatrix{
1 \ar[r] & \prod_{v \notin \Sigma}H^1(K_v,H) \ar[r]^{\iota} & \prod_{v \notin \Sigma}H^1(K_v,D) \\
H^1(K,H^{\der}) \ar[u] \ar[r] & H^1(K,H) \ar[u] \ar[r]^{\pi} & H^1(K,D) \ar[u] \\
\Sha^{\Sigma}(K,H^{\der}) \ar@{=}[u] \ar[r] \ar[d]_{\lambda_{H^{\der}}}^{\wr} & \Sha^{\Sigma}(K,H) \ar@^{(->}[u] \ar[r]^{\pi^{\Sigma}} \ar[d]_{\lambda} & \Sha^{\Sigma}(K,D) \ar@^{(->}[u] \ar[d]^{\lambda_D} \\
\prod_{v\mid\infty}H^1(K_v,H^{\der}) \ar[r]^{\rho} & \prod_{v\mid\infty}H^1(K_v,H) \ar[r] & \prod_{v\mid\infty}H^1(K_v,D)
}\]
All rows are exact sequences of pointed sets (for the third row this follows from the exactness of the second row and the equality $\Sha^{\Sigma}(K,H^{\der}) = H^1(K,H^{\der})$). Moreover, for all places $v$ of $K$ it is known that the local cohomology groups $H^1(K_v,H)$ carry abelian group structures functorial in $H$, thus the first and the last row are in fact exact sequences of abelian groups. By \cite{Bo_AbCoh}~Theorem~5.7 the map $\pi$ is surjective. This implies together with the injectivity of $\iota$ that $\pi^{\Sigma}$ is surjective.

Now let $y \in \prod_{v\mid\infty}H^1(K_v,H)$. As $\pi^{\Sigma}$ and $\lambda_D$ are surjective, there exists $x \in \Sha^{\Sigma}(K,H)$ such that $\lambda(x) - y = (\rho \circ \lambda_{H^{\der}})(z_0)$ for some $z_0 \in \Sha^{\Sigma}(K,H^{\der})$. Let $z \in \Sha^{\Sigma}(K,H)$ be the image of $z_0$. It remains to find an element $w \in \Sha^{\Sigma}(K,H)$ such that $\lambda(w) = \lambda(x) - \lambda(z)$. For this we choose a maximal torus $T$ of $H$ such that $x$ and $z$ are both in the image of $\eta\colon H^1(K,T) \to H^1(K,H)$ (this is always possible by \cite{Bo_AbCoh}~Theorem~5.11), say $x = \eta(x')$ and $z = \eta(z')$. Define $w := \eta(x' - z')$. As both compositions
\begin{gather*}
H^1(K,T) \to \prod_{v\notin\Sigma}H^1(K_v,T) \to \prod_{v\notin\Sigma}H^1(K_v,H),\\
H^1(K,T) \to \prod_{v\mid\infty}H^1(K_v,T) \to \prod_{v\mid\infty}H^1(K_v,H)
\end{gather*}
are homomorphisms of groups, one has $w \in \Sha^{\Sigma}(K,H)$ and $\lambda(w) = \lambda(x) - \lambda(z)$.
\end{proof}

Recall that we fixed an integral Shimura-PEL-datum $\Dscr = \bigl(B,\star,V,\lrangle, O_B, \Lambda, h\bigr)$ unramified at $p$. We denoted by $\Gbf = \Gbf(\Dscr)$ the attached $\QQ$-group and we let $[\mu] = [\mu(\Dscr)]$ be the attached $\Gbf(\QQbar_p)$-conjugacy class of minuscule cocharacters of $\Gbf_{\QQbar_p}$. For the proof of Theorem~\ref{NewtonNonEmpty} we may assume and do assume from now on that $B$ is a simple $\QQ$-algebra. Let $F$ be the center of $B$ and let $F_0 = F^{\star=\id}$. Then $F_0$ is a totally real number field and either $F = F_0$ (Case~C) or $F$ is a quadratic imaginary extension of $F_0$ (Case~A). Let $\Sigma$ be the set of places of $\QQ$ consisting of the infinite place and of all finite places $v$ such that there exists a place of $F_0$ over $v$ which is ramified in $F$. As $\Dscr$ is unramified at $p$, we have $p \notin \Sigma$.

\begin{lemma}\label{ChangeMu}
Let $[\mu']$ be any $\Gbf(\QQbar_p)$-conjugacy class of minuscule cocharacters of $\Gbf_{\QQbar_p}$. Then there exists an integral Shimura-PEL-datum $\Dscr'$ unramified at $p$ of the form $\Dscr' = \bigl(B,\star,V,\lrangle', O_B, \Lambda, h'\bigr)$ such that for every place $v \notin \Sigma$ the $(B,\star)$-skew hermitian spaces $(V,\lrangle)$ and $(V,\lrangle')$ are isomorphic over $\QQ_v$ and such that $[\mu(\Dscr')] = [\mu']$.
\end{lemma}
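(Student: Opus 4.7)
The plan is to reformulate the existence of $\Dscr'$ as a Galois cohomology problem and solve it by applying Lemma~\ref{SProp} to $\Gbf$ and the fixed set $\Sigma$ of bad places. Via the compatible embeddings $\QQbar \hookrightarrow \CC$ and $\QQbar \hookrightarrow \QQbar_p$, the conjugacy class $[\mu']$ may be viewed over $\CC$. The requirements on $h'$ (Hodge type $(-1,0) + (0,-1)$, $2\pi\sqrt{-1}\lrangle'$ a polarization, and $\mu_{h'} \in [\mu']$) then determine the isomorphism class of $(V,\lrangle')_{\RR}$ as a $(B,\star)$-skew hermitian space over $\RR$; concretely, by Remark~\ref{CasesB}, $[\mu']$ prescribes the signatures of $\lrangle'$ at the real places of $F_0$. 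Since isomorphism classes of non-degenerate $(B,\star)$-skew hermitian forms on $V$ are classified by $H^1(\QQ, \Gbf)$, the data amount to prescribing a class $\ybar \in \prod_{v \mid \infty} H^1(\QQ_v, \Gbf)$ and asking that it lie in the image of $\lambda_{\Gbf}\colon \Sha^\Sigma(\QQ, \Gbf) \to \prod_{v \mid \infty} H^1(\QQ_v, \Gbf)$.

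Next, I would verify Condition~(S) for $(\Gbf, \Sigma)$. By Remark~\ref{StructureGbf}, in both Cases (A) and (C) the derived group $\Gbf^{\der}$ is simply connected, so by Lemma~\ref{SProp}(2) it satisfies~(S). Hence by Lemma~\ref{SProp}(3) it suffices to verify~(S) for $D = \Gbf/\Gbf^{\der}$. In Case (C), $D \cong \GG_{m,\QQ}$, for which the statement is trivial by Hilbert~90. In Case (A), the descriptions~\eqref{DescribeDEven} and~\eqref{DescribeDOdd}, combined with Lemma~\ref{SProp}(1) and~(3), reduce the task to verifying~(S) for the norm-one torus $D_0 = \Ker(N_{F/F_0})$ over $F_0$ with respect to the set $\Sigma_0$ of places of $F_0$ above $\Sigma$. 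Since $\Sigma_0$ contains all places of $F_0$ ramifying in $F$, the Hasse norm theorem for the quadratic extension $F/F_0$, applied to the identification $H^1(K, D_0) = K^\times/N_{F\otimes_{F_0}K/K}((F \otimes_{F_0} K)^\times)$ for $K = F_0$ and $K = (F_0)_{v_0}$, yields the required surjectivity.

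Having established~(S), I lift the prescribed class $\ybar$ at infinity to an element $\xi \in \Sha^\Sigma(\QQ, \Gbf)$. The associated skew hermitian space $(V, \lrangle')$ is then isomorphic to $(V, \lrangle)$ at every finite place $v \notin \Sigma$, in particular at $p$. Fixing such an isomorphism at $p$ and transporting the integral structures via it, I may assume $O_B$ and $\Lambda$ still define an $O_B$-invariant self-dual $\ZZ_p$-lattice for $\lrangle'$; rescaling by a suitable rational unit absorbs the similitude ambiguity. By construction an $h'$ with $\mu_{h'} \in [\mu']$ exists, so $\Dscr' := (B, \star, V, \lrangle', O_B, \Lambda, h')$ is an integral Shimura-PEL-datum, and it is unramified at $p$ since $p \notin \Sigma$.

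The main obstacle is the cohomological computation for $D$ in Case~(A), which reduces via the explicit descriptions~\eqref{DescribeDEven}, \eqref{DescribeDOdd} to the Hasse norm theorem for $F/F_0$; the choice of $\Sigma$ (containing all places of $\QQ$ below ramified places of $F/F_0$) is exactly what is needed so that the map from $\Sha^{\Sigma_0}(F_0, D_0)$ to the archimedean cohomology is surjective. A secondary check is that the classification of real Hodge structures on $V_{\RR}$ of type $(-1,0) + (0,-1)$ with $\mu_h \in [\mu']$ and $2\pi\sqrt{-1}\lrangle'$ polarizing corresponds faithfully to classes in $\prod_{v \mid \infty} H^1(\QQ_v, \Gbf)$; this identification is standard.
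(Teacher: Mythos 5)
Your proof follows the paper's strategy very closely: identify $[\mu']$ with a conjugacy class over $\CC$, use Kottwitz's real classification to translate the problem into lifting a class in $\prod_{v\mid\infty}H^1(\QQ_v,\Gbf)$ to $\Sha^\Sigma(\QQ,\Gbf)$, then verify Condition~(S) via Lemma~\ref{SProp}(3) to reduce to $D=\Gbf/\Gbf^{\der}$, and finally via \eqref{DescribeDEven}, \eqref{DescribeDOdd} and Lemma~\ref{SProp}(1) to reduce to the norm-one torus $D_0$ with the set $\Sigma_0$. Up to this point everything matches the paper.

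However, there is a genuine gap in the final step. You claim that Condition~(S) for $(D_0,\Sigma_0)$ follows from ``the Hasse norm theorem for the quadratic extension $F/F_0$''. This is the wrong tool. The Hasse norm theorem asserts precisely the \emph{injectivity} of $H^1(F_0,D_0)\to\prod_v H^1((F_0)_v,D_0)$, i.e.\ the vanishing of $\Sha(F_0,D_0)$ — that an element of $F_0^\times$ which is a local norm everywhere is a global norm. It says nothing about the \emph{surjectivity} of the restriction $\lambda_{D_0}\colon\Sha^{\Sigma_0}(F_0,D_0)\to\prod_{v\mid\infty}H^1((F_0)_v,D_0)$, which is what you need. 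You need to produce, for any prescribed collection of signs at the real places of $F_0$, an element $x\in F_0^\times$ realizing those signs which is also a local norm at every finite place $v\notin\Sigma_0$. The paper's proof is constructive at this point: it identifies $\prod_{v\mid\infty}H^1((F_0)_v,D_0)$ with $\{\pm1\}^I$ (where $I$ is the set of real embeddings of $F_0$), chooses a unit $x\in O_{F_0}^\times$ with the prescribed signs, and then observes by local class field theory that a unit is a local norm for any unramified quadratic extension — so $x$ is a local norm at every $v\notin\Sigma_0$, yielding $[x]\in\Sha^{\Sigma_0}(F_0,D_0)$ with $\lambda_{D_0}([x])$ equal to the prescribed class. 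You should replace the appeal to the Hasse norm theorem with an argument of this concrete type (or at minimum explain how your cited theorem yields the needed surjectivity, which it does not do on its face).
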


\begin{proof}
We identify the set of $\Gbf(\QQbar_p)$-conjugacy class of cocharacters of $\Gbf_{\QQbar_p}$ with the set of $\Gbf(\CC)$-conjugacy classes of cocharacters of $\Gbf_{\CC}$. The classification of PEL data over $\RR$ (\cite{Ko_ShFin}~\S4) shows that every conjugacy class of a minuscule cocharacter occurs as the conjugacy class attached to a real PEL datum of the form $(B_{\RR},\star,V_{\RR},\lrangle',h')$. Now skew-hermitian $(B_{\RR},\star)$-spaces (resp.~$(B,\star)$-spaces) of the same dimension as $V$ are classified by $H^1(\RR,\Gbf)$ (resp.~by $H^1(\QQ,\Gbf)$). Thus the lemma is shown if we show that the reductive $\QQ$-group $\Gbf$ and $\Sigma$ satisfy the Condition~(S).

By Lemma~\ref{SProp}~(3) it suffices to show that $D := \Gbf/\Gbf^{\rm der}$ and $\Sigma$ satisfy Condition~(S). We use the notation of Remark~\ref{StructureGbf} where we studied $D$. In case (C) one has $D = \GG_{m,\QQ}$ and thus Condition~(S) is trivially satisfied by Hilbert 90. In case (A) let $\Sigma_0$ be the set of places of $F_0$ consisting of the archimedean places and the places which are ramified in $F$. Let us assume for a moment that $(D_0,\Sigma_0)$ satisfies Condition~(S). Then by Lemma~\ref{SProp}~(1) the $\QQ$-torus $D' := \Res_{F_0/\QQ}D_0$ and $\Sigma$ satisfy Condition~(S).

If $n$ is even, then \eqref{DescribeDEven} shows that $(D,\Sigma)$ satisfies Condition~(S). If $n$ is odd, we use the exact sequence~\eqref{DescribeDOdd} and obtain a commutative diagram
\[\xymatrix{
\Sha^{\Sigma}(\QQ,D') \ar[r] \ar[d]_{\lambda_{D'}} & \Sha^{\Sigma}(\QQ,D) \ar[d]^{\lambda_D} \\
H^1(\RR,D') \ar[r] & H^1(\RR,D).
}\]
By our assumption, $\lambda_{D'}$ is surjective and again by Hilbert 90 the lower horizontal map is surjective. Thus $\lambda_D$ is surjective.

It remains to show that $D_0 = \Ker(N_{F/F_0}\colon \Res_{F/F_0}\GG_{m,F} \to \GG_{m,F_0})$ and $\Sigma_0$ satisfy Condition~(S). For every field extension $K$ of $F_0$ one has
\[
H^1(K,D_0) = K^{\times}/N_{F \otimes_{F_0} K/K}((F \otimes_{F_0} K)^{\times}).
\]
We thus have $\prod_{v\mid\infty}H^1((F_0)_v,D_0) = \prod_{\iota}\RR^{\times}/\RR^{>0}$, where $\iota$ runs through the set $I$ of embeddings of $F_0$ into $\RR$. For $x \in F_0^{\times}$ we set $\sgn(x) := (\sgn \iota(x))_{\iota \in I} \in \{\pm 1\}^I$. Now choose for $\eps \in \{\pm 1\}^I$ some unit $x \in O_{F_0}^{\times}$ with $\sgn(x) = \eps$. Then by local class field theory $x$ is a norm for the extension $F_v := F \otimes_{F_0} (F_0)_v$ of $(F_0)_v$ for all unramified places $v$ of $F_0$. 
\end{proof}

\begin{proof}[Proof of Theorem~\ref{NewtonNonEmpty}]
Let $b\in B(G,\mu)$. By Theorem~\ref{thmexfundalc} there is a minuscule $\mu'$ with the following property. Let $\Dscr'$ be a corresponding Shimura-PEL-datum, unramified at $p$, as in Lemma~\ref{ChangeMu} (in particular $[\mu(\Dscr')] = [\mu']$). Then the Newton stratum $\Ncal'_b$ in $\Ascr_{\Dscr'}$ contains a fundamental Ekedahl-Oort stratum. This Ekedahl-Oort stratum is nonempty by Theorem~\ref{thmEOnonempty}. Let $\APlus$ be the abelian variety with $\Dscr'$-structure associated with a $k$-valued point in $\Ncal'_b$. Let $(X,\iota,\lambda)$ be a $p$-divisible group with $\Dscr$-structure in the isogeny class determined by $b$. Then there is a quasi-isogeny between $X$ and $A[p^{\infty}]$ compatible with the $O_B$-action and respecting the polarizations up to a scalar. Lemma \ref{IsogAV} thus implies that there is an abelian variety with $\Dscr$-structure whose $p$-divisible group is $(X,\iota,\lambda)$, which proves the theorem. 
\end{proof}


\appendix
\section{Appendix on Coxeter groups, reductive group schemes and quotient stacks}
In this appendix we fix some conventions and recall results on Coxeter groups, on reductive group schemes and on quotient stacks that are used in the main text.

\subsection{Coset representatives of Coxeter groups}\label{RecallCoxeter}
Let $W$ be a Coxeter group and $I$ its generating set of simple reflections. Let $\ell$ denote the length function on $W$.

Let $J$ be a subset of $I$. We denote by $W_J$ the subgroup of $W$ generated by $J$ and by $W^J$ (respectively $\leftexp{J}{W}$) the set of elements $w$ of $W$ which have minimal length in their coset $wW_J$ (respectively $W_Jw$). Then every $w \in W$ can be written uniquely as $w=w^{J}w_{J}= w'_J \leftexp{J}{w}$ with $w_J, w'_J \in W_J$, $w^J\in W^J$ and $\leftexp{J}{w} \in \leftexp{J}{W}$, and $\ell(w) = \ell(w_J) + \ell(w^J) = \ell(w'_J) + \ell(\leftexp{J}{w})$ (see \cite{DDPW}, Proposition 4.16). In particular, $W^J$ and $\leftexp{J}{W}$ are systems of representatives for $W/W_J$ and $W_J\backslash W$ respectively.

Furthermore, if $K$ is a second subset of $I$, let $\doubleexp{J}{W}{K}$ be the set of $w \in W$ which have minimal length in the double coset $W_JwW_K$. Then $\doubleexp{J}{W}{K}=\leftexp{J}{W}\cap W^K$ and $\doubleexp{J}{W}{K}$ is a system of representatives for $W_J\backslash W/W_K$ (see \cite{DDPW}~(4.3.2)).

\subsection{Bruhat order}\label{BruhatOrder}
We let $\leq$ denote the Bruhat order on~$W$. This natural partial order is characterized by the following property: For $x, w\in W$ we have $x\leq w$ if and only if for some (or, equivalently, any) reduced expression $w=s_{i_{1}}\cdots s_{i_{n}}$ as a product of simple reflections $s_i\in I$, one gets a reduced expression for $x$ by removing certain $s_{i_{j}}$ from this product. More information about the Bruhat order can be found in \cite{BjoernerBrenti}, Chapter 2. The set $\leftexp{J}{W}$ can be described as
\begin{equation}\label{eq:iwchar}
\leftexp{J}{W}=\set{w \in W}{\text{$w < sw$ for all $s \in J$}}
\end{equation}
(see \cite{BjoernerBrenti}, Definition 2.4.2 and Corollary 2.4.5).

\subsection{Reductive group schemes, maximal tori, and Borel subgroups}\label{Reductive}
Let $S$ be a scheme. A \emph{reductive group scheme over $S$} is a smooth affine group scheme $G$ over $S$ such that for every geometric point $s \in S$ the geometric fiber $G_{\sbar}$ is a connected reductive algebraic group over $\kappa(\sbar)$.

Let $G$ be a reductive group scheme over $S$. A \emph{maximal torus of $G$} is a closed subtorus $T$ of $G$ such that $T_{\sbar}$ is a maximal element in the set of subtori of $G_{\sbar}$ for all $s \in S$. By a theorem of Grothendieck (\cite{SGA3}~Exp.~XIV,~3.20) maximal tori of $G$ exist Zariski locally on $S$. A \emph{Borel subgroup} of $G$ is a closed smooth subgroup scheme $B$ of $G$ such that for all $s \in S$ the geometric fiber $B_{\sbar}$ is a Borel subgroup of $G_{\sbar}$ in the usual sense (i.e., a maximal smooth connected solvable subgroup). A reductive group scheme over $S$ is called \emph{split} if there exists a maximal torus $T$ of $G$ such that $T \cong \GG_{m,S}^r$ for some integer $r \geq 0$.

\subsection{Parabolic subgroups and Levi subgroups}\label{Parabolic}
A smooth closed subgroup scheme $P$ of $G$ is called \emph{parabolic subgroup of $G$} if the fppf quotient $G/P$ is representable by a smooth projective scheme or, equivalently by \cite{SGA3}~Exp.~XXII~(5.8.5), if $G_{\sbar}/P_{\sbar}$ is proper for all $s \in S$. Every Borel subgroup of $G$ is a parabolic subgroup. The \emph{unipotent radical} of $P$, denoted by $U_P$, is the largest smooth normal closed subgroup scheme with unipotent and connected fibers. It exists by \cite{SGA3}~Exp.~XXII,~(5.11.4). If $P$ contains a maximal torus $T$ of $G$, there exists a unique reductive closed subgroup scheme $L$ of $P$ containing $T$ such that the canonical homomorphism $L \to P/U_P$ is an isomorphism (loc.~cit.). Any such subgroup $L$ is called a \emph{Levi subgroup of $P$}.

The functor that sends an $S$-scheme $T$ to the set of Borel (resp.~parabolic) subgroups of $G \times_S T$ is representable by a smooth projective $S$-scheme by~\cite{SGA3}~Exp.~XXVI,~3. We call the representing scheme $\Bor_G$ (resp.~$\Par_G$). The functor that attaches to an $S$-scheme $T$ the set of pairs $(P,L)$, where $P$ is a parabolic subgroup of $G \times_S T$ and $L$ is a Levi subgroup of $P$ is representable by a smooth quasi-projective $S$-scheme (loc.~cit.).

If $S$ is local, $G$ is called \emph{quasi-split} if there exists a Borel subgroup of $G$. Every split reductive group scheme is quasi-split. If $S = \Spec R$, where $R$ is a henselian local ring whose residue field $k$ has cohomological dimension $\leq 1$ (e.g., if $k$ is finite), then $G$ is quasi-split. Indeed, the special fiber $G_k$ has a Borel subgroup $B_k$ by \cite{Serre_GalCoh}~III, \S2.3. As the scheme of Borel subgroups $\Bor_G$ is smooth, there exists a lifting of $B_k$ to a Borel subgroup of $G$.

\subsection{Weyl groups and types of parabolic subgroups over connected base schemes}\label{Weylgroups}
Let $G$ be a reductive group over an algebraically closed field, let $B$ be a Borel subgroup of $G$, and let $T$ be a maximal torus of~$B$. Let $W(T) := \Norm_G(T)/T$ denote the associated Weyl group, and let $I(B,T) \subset W(T)$ denote the set of simple reflections defined by~$B$. Then $W(T)$ is a Coxeter group with respect to the subset $I(B,T)$. 

A priori this data depends on the pair $(B,T)$. However, any other such pair $(B',T')$ is obtained by conjugating $(B,T)$ by some element $g\in G$ which is unique up to right multiplication by~$T$. Thus conjugation by $g$ induces isomorphisms $W(T) \stackrel{\sim}{\to} W(T')$ and $I(B,T) \stackrel{\sim}{\to} I(B',T')$ that are independent of~$g$. Moreover, the isomorphisms associated to any three such pairs are compatible with each other. Thus $W := W_G := W(T)$ and $I := I(B,T)$ for any choice of $(B,T)$ can be viewed as instances of ``the'' Weyl group and ``the'' set of simple reflections of $G$, in the sense that up to unique isomorphisms they depend only on~$G$.

Now let $G$ be a quasi-split reductive group scheme over a connected scheme $S$. Then we obtain for any geometric point $\sbar \to S$ the Weyl group and the set of simple reflections $(W_{\sbar},I_{\sbar})$ of $G_{\sbar}$. The algebraic fundamental group $\pi_1(S,\sbar)$ acts naturally on $W_{\sbar}$ preserving $I_{\sbar}$ (because $G$ is quasi-split), and every \'etale path $\gamma$ from $\sbar$ to another geometric point $\sbar'$ of $S$ yields an isomorphism of $(W_{\sbar},I_{\sbar}) \iso (W_{\sbar'},I_{\sbar'})$ that is equivariant with respect to the isomorphism $\pi_1(S,\sbar) \iso \pi_1(S,\sbar')$ induced by $\gamma$ (cf.~\cite{SGA3}~Exp.~XII, 2.1). In particular $(W_{\sbar},I_{\sbar})$ together with its action by $\pi_1(S,\sbar)$ is independent of the choice of $\sbar$ up to isomorphism. We denote it by $(W,I)$ and call it the \emph{Weyl system of $G$}.

If $P$ is a parabolic subgroup of $G$ and $s \in S$, the type $J_{\sbar} \subset I$ of the parabolic subgroup $P_{\sbar}$ of $G_{\sbar}$ is independent of $s \in S$ (\cite{SGA3}~Exp.~XXVI,~3) and we call $J := J_{\sbar}$ the \emph{type of $P$}. For a subset $J$ of $I$ we denote by $\Par_J$ the open and closed subscheme of $\Par$ parameterizing parabolic subgroups of type $J$. If $S$ is a semi-local scheme, $J$ and $\Par_J$ are defined over a finite \'etale covering of $S$ (\cite{SGA3}~Exp.~XXIV, 4.4.1).

For simplicity assume that $S$ is local. Let $J, K \subseteq I$ be subsets and let $S_1 \to S$ be the finite \'etale extension over which $J$ and $K$ are defined. Let $w \in \doubleexp{J}{W}{K}$. For every $S_1$-scheme $S'$ and for every parabolic subgroup $P$ of $G_{S'}$ of type $J$ and every parabolic subgroup $Q$ of $G_{S'}$ of type $K$ we write
\[
\relpos(P,Q) = w
\]
if there exists an fppf-covering on $S'' \to S'$, a Borel subgroup $B$ of $G_{S''}$ and a split maximal torus $T$ of $B$ such that $P_{S''}$ contains $B$ and $Q_{S''}$ contains $\leftexp{\dot w}{B}$, where $\dot w \in \Norm_{G_{S''}}(T)(S'')$ is a representative of $w \in W = \Norm_{G_{S''}}(T)(S'')/T(S'')$.

If $S' = \Spec k$ for an algebraically closed field, then $(P,Q) \sends \relpos(P,Q)$ yields a bijection between $G(k)$-orbits on $\Par_J(k) \times \Par_K(k)$ and the set $\doubleexp JWK$.

\subsection{One-parameter subgroups and their type}\label{one-ps}
Let $G$ be a reductive group scheme over a connected scheme $S$. A \emph{one-parameter subgroup} of $G$ is by definition a homomorphism of group schemes $\lambda\colon \GG_{m,S} \to G$, where $\GG_{m,S}$ denotes the multiplicative group over $S$. We identify the character group of $\GG_{m,S}$ with $\ZZ$. Composing $\lambda$ with the adjoint representation $G \to \GL(\Lie(G))$ yields a decomposition $\Lie(G) = \bigoplus_{n \in \ZZ} \gfr_n$. There is a unique parabolic subgroup $P(\lambda)$ of $G$ and a unique Levi subgroup $L(\lambda)$ of $P(\lambda)$ such that
\[
\Lie P(\lambda) = \bigoplus_{n \geq 0}\gfr_n, \qquad \Lie L(\lambda) = \gfr_0.
\]
Indeed, because of the uniqueness assertion we may show this locally for the \'etale topology and hence can assume that the image of $\lambda$ lies in a split maximal torus. Then the claim follows from~\cite{SGA3}~Exp. XXVI, 1.4 and 4.3.2. By loc.~cit.\ we may define $L(\lambda)$ also as the centralizer of $\lambda$.

The type of $P(\lambda)$ is also called the \emph{type of $\lambda$}. It depends only on the conjugacy class of $\lambda$.

\subsection{The example of the symplectic group}\label{SymplecticExample}
As an example consider a symplectic space $(V,\lrangle)$ of dimension $2g$ over a field $k$ and denote by $G = \GSp(V,\lrangle)$ the group of symplectic similitudes of $(V,\lrangle)$. Let $S_{2g}$ denote the symmetric group of the set $\{1,\dotsc,2g\}$. Then the Weyl system $(W,I)$ of $G$ is given by
\begin{equation}\label{WeylSymplectic}
\begin{aligned}
W &= \set{w \in S_{2g}}{\text{$w(i) + w(2g+1-i) = 2g+1$ for all $i = 1,\dots,g$}},\\
I &= \{s_1,\dots,s_g\},\quad\text{where}\quad
s_i =
\begin{cases} \tau_i\tau_{2g-i}, &\text{for $i = 1,\dots,g-1$;}\\
\tau_g, &\text{for $i = g$.}
\end{cases}
\end{aligned}
\end{equation}
Here $\tau_j$ denotes the transposition of $j$ and $j+1$. The length of an element $w \in W$ can be computed as follows
\begin{align*}
\ell(w) = &\ \#\set{(i,j)}{1 \leq i < j \leq g, w(i) > w(j)} \\
&\ + \#\set{(i,j)}{1 \leq i \leq j \leq g, w(i) + w(j) > 2g+1}.
\end{align*}
For every $d$-tuple $(x_1,\dots,x_d)$ of real numbers we denote by $(x_1,\dots,x_d){\uparrow}$ the $d$-tuple $(x_{\sigma(1)},\dots,x_{\sigma(d)})$ with $\sigma \in S_d$ such that $x_{\sigma(1)} \leq \dots \leq x_{\sigma(d)}$. Then the Bruhat order is given by
\[
w' \leq w \iff \forall\ 1\leq i \leq g: (w'(1),\dots,w'(i)){\uparrow} \leq (w(1),\dots,w(i)){\uparrow},
\]
where we compare tuples componentwisely.

To study the Siegel case we consider the subset $J := \{s_1,\dots,s_{g-1}\}$ of $I$. Then $W_J$ consists of those permutations $w \in W$ such that
$w(\{1,\dots,g\}) = \{1,\dots,g\}$. The map
\[
W_J \to S_g. \qquad w \mapsto w\restricted{\{1,\dots,g\}}
\]
is a group isomorphism. The set $\leftexp{J}{W}$ consists in this case of those elements $w \in W$ such that $w^{-1}(1) < w^{-1}(2) < \dots < w^{-1}(g)$. Of course, this implies $w^{-1}(g+1) < \dots < w^{-1}(2g)$.

It is convenient to give an alternative description of $\leftexp{J}{W}$. To $w \in \leftexp{J}{W}$ we attach $\epsilon = (\epsilon_i)_{1 \leq i \leq g} \in \{0,1\}^g$ by
\[
\epsilon_i := 
\begin{cases}
0,&\text{if $i \in \{w^{-1}(1),\dots,w^{-1}(g)\}$};\\
1,&\text{otherwise},
\end{cases}
\qquad i = 1,\dots,g.
\]
This yields a bijection $\leftexp{J}{W} \bijective \{0,1\}^g$. The length of such an element $\epsilon = (\epsilon_1,\dots,\epsilon_g)$ is equal to
\begin{equation}\label{LengthSymplectic}
\ell(\epsilon) = \sum_{i=1}^g i\epsilon_{g+1-i}.
\end{equation}
The set $\doubleexp JWJ \cong W_J \backslash W/W_J$ corresponds to the set of $W_J$-orbits of $\{0,1\}^g$, where $W_J = S_g$ acts by permuting the entries of $(\epsilon_i)_i \in \{0,1\}^g$. Thus $(\epsilon_i)_i \sends \#\set{i}{\epsilon_i = 1}$ yields a bijection
\begin{equation}\label{SymplecticDouble}
\doubleexp JWJ \bijective \{0,\dots,g\}.
\end{equation}

The set $\leftexp{J}{W}$ can also be described by elementary sequences in the sense of Oort (see~\cite{EkedahlvdGeer}~\S2; note that in loc.~cit.\ the set $W^J$ is considered which we identify with $\leftexp{J}{W}$ via $w \sends w^{-1}$): For $w \in \leftexp{J}{W}$ we define
\[
\varphi_w\colon \{0,\dots,g\} \to \ZZ_{\geq 0}, \qquad i \sends i - \#\set{1 \leq a \leq g}{w^{-1}(a) \leq i}.
\]
Then $\varphi_w$ is an elementary sequence, i.e., a map $\varphi\colon \{0,\dots,g\} \to \ZZ_{\geq 0}$ such that $\varphi(0) = 0$ and such that $\varphi(i-1) \leq \varphi(i) \leq \varphi(i-1) + 1$ for all $i = 1,\dots,g$. The map $w \sends \varphi_w$ yields a bijection between $\leftexp{J}{W}$ and the set $\Scal_g$ of elementary sequences. The tuple $(\epsilon_i)_i \in \{0,1\}^g$ corresponding to an element $w \in \leftexp{J}{W}$ can be described via the elementary sequence $\varphi_w$ by $\epsilon_i = \varphi_w(i) - \varphi_w(i-1)$. Using~\eqref{LengthSymplectic} an easy calculation shows
\begin{equation}\label{LengthElSeq}
\ell(w) = \sum_{i=1}^g \varphi_w(i).
\end{equation}
Finally, for $w,w' \in \leftexp{J}{W}$ we have $w' \leq w$ if and only if $\varphi_{w'}(i) \leq \varphi_w(i)$ for all $i = 1,\dots,g$.

Let us now describe parabolic subgroups of $G$ of type $J$. Let $S$ be any $k$-scheme. Then $V_S = V \otimes_k \Oscr_S$ is a free $\Oscr_S$-module of rank $2g$, and the base change of $\lrangle$ is a perfect alternating form on $V_S$. An $\Oscr_S$-submodule $\Lscr$ of $V_S$ is called \emph{Lagrangian} if $\Lscr$ is locally on $S$ a direct summand of $V_S$ of rank $g$ and if $\Lscr$ is totally isotropic. Attaching to a Lagrangian $\Lscr$ its stabilizer in $G_S$ yields a bijection between the set of Lagrangians in $V_S$ and the set of parabolic subgroups $P$ of $G \times_k S$ of type $J$ (with $J = \{s_1,\dots,s_{g-1}\}$).

Let $\Lscr$ and $\Lscr'$ be Lagrangians in $V_S$ and let $P$ and $P'$ be the corresponding parabolic subgroups of $G_S$ of type $J$. Then for $d \in \{0,\dots,g\} = \doubleexp JWJ$~\eqref{SymplecticDouble} we have
\begin{equation}\label{InterpretePos}
\relpos(P,P') = d \iff \text{$V_S/(\Lscr + \Lscr')$ is locally free of rank $g-d$}.
\end{equation}
Here the condition that $V_S/(\Lscr + \Lscr')$ is locally free (of some rank) is equivalent to the condition that fppf-locally $P$ and $Q$ contain a common maximal torus (\cite{MW}~Section~3.5).

\subsection{The underlying topological space of a quotient stack}\label{QuotStacks}
We recall some -- probably well known -- facts on quotient stacks. For lack of a better reference we refer to~\cite{We2}~(4.2)--(4.4). Let $k$ be a field, $\kbar$ an algebraic closure, $\Gamma := \Aut(\kbar/k)$ the profinite group of $k$-automorphisms of $\kbar$. Let $X$ be a $k$-scheme of finite type and let $H$ be a smooth affine group scheme over $k$ that acts on $X$. We assume that there are only finitely many $H(\kbar)$-orbits in $X(\kbar)$.

Let $[H \backslash X]$ be the algebraic quotient stack. To describe its underlying topological space we first recall that if $\leq$ is a partial order on a set $Z$, we can define a topology on $Z$ by defining a subset $U$ of $Z$ to be open if for all $u \in U$ and $z \in Z$ with $u \leq z$ one has $z \in U$.

Let $\Xi^{\rm alg}$ be the set of $H(\kbar)$-orbits in $X(\kbar)$. For $\Ocal, \Ocal' \in \Xi^{\rm alg}$ we set $\Ocal' \leq \Ocal$ if the closure of $\Ocal$ contains $\Ocal'$. This defines a partial order and hence a topology on $\Xi^{\rm alg}$. The group $\Gamma$ acts on $\Xi^{\rm alg}$ and we denote the set of $\Gamma$-orbits on $\Xi^{\rm alg}$ by $\Xi$. We endow $\Xi$ with the quotient topology. Then $\Xi$ is homeomorphic to the underlying topological space of $[H \backslash X]$.


\end{document}